\documentclass[12pt,a4paper]{amsart}
\usepackage[top=1in, bottom=.9in, left=0.9in, right=0.9in]{geometry}
\usepackage[utf8]{inputenc}
\usepackage[T1]{fontenc}
\usepackage{amsmath}
\usepackage{amssymb}
\usepackage{graphicx}
\usepackage{float}
\usepackage{version}
\usepackage{csquotes}
\MakeOuterQuote{"}
\usepackage[english]{babel}
\usepackage{xcolor}
\usepackage[linktocpage=true,pagebackref=false, colorlinks = true, linkcolor=blue, bookmarks=true, citecolor=blue, urlcolor=blue]{hyperref}
\usepackage{listings}
\usepackage{tikz-cd}

\newcommand{\del}{\partial}
\newcommand{\delbar}{\bar{\partial}}
\DeclareMathOperator{\GL}{GL}
\DeclareMathOperator{\SL}{SL}
\DeclareMathOperator{\Ker}{Ker}
\DeclareMathOperator{\im}{Im}
\newcommand{\C}{\mathbb{C}}
\newcommand{\R}{\mathbb{R}}
\newcommand{\Z}{\mathbb{Z}}

\theoremstyle{plain}
\newtheorem{teorema}{Theorem} [section]
\newtheorem{proposizione}{Proposition} [section]
\newtheorem{corollario}{Corollary}[section]
\newtheorem{lemma}{Lemma}[section]
\theoremstyle{definition}
\newtheorem{definizione}{Definition}[section]
\newtheorem{esempio}{Example}[section]
\newtheorem{osservazione}{Remark}[section]

\numberwithin{equation}{section}

\title{Non-K\"ahler Special Lagrangian submanifolds and SYZ mirror symmetry}
\author{Tristan C. Collins, Francesca Lusetti, Adriano Tomassini}
\date{}
\address{Department of Mathematics, University of Toronto, 40 St. George St., Toronto, ON}
\email{tristanc@math.toronto.edu}
\address{Dipartimento di Matematica ``Tullio Levi-Civita'', Università degli Studi di Padova, Via Trieste, 63,
35121, Padova, Italy}
\email{francesca.lusetti@phd.unipd.it}
\address{Dipartimento di Scienze Matematiche, Fisiche e Informatiche\\
Unit\`{a} di Matematica e Informatica,
Universit\`{a} degli Studi di Parma\\
Parco Area delle Scienze 53/A, 43124 \\
Parma, Italy}
\email{adriano.tomassini@unipr.it}
\subjclass[2010]{53C15; 53D37}
\keywords{type $IIA$ structure; type $IIB$ structure; non-K\"ahler Calabi-Yau; special Lagrangian; Nakamura manifold; mirror symmetry}
\thanks{\newline 
The first author is supported in part by NSERC Discovery grant RGPIN-2024-518857. The second author is supported by the department of Mathematics "Tullio Levi-Civita" of the University of Padova. The third author is partially supported by the Project PRIN 2022 ``Real and Complex Manifolds: Geometry and Holomorphic Dynamics 2022AP8HZ9'' and by GNSAGA of INdAM}

\begin{document}
	
	\begin{abstract}
	   We determine purely algebraic equations to identify \textit{SLags} generated by invariant distributions in a class of non-K\"ahler Calabi-Yau manifolds. We determine SLag distributions, determine which leaves integrate to compact submanifolds and study the deformation theory, which we find to be unobstructed. We apply our results to the Iwasawa manifold, the completely solvable 6-dimensional Nakamura manifold and the complex parallelizable Nakamura manifold. Through these examples we find families of topologically distinct \textit{SLags}, including the existence of SLag torus fibrations.  Following the proposal of Lau-Tseng-Yau, we compute the non-K\"ahler SYZ mirrors of Nakamura manifolds, together with their refined symplectic Bott-Chern cohomologies.  As a consequence, we find the existence of semi-flat non-K\"ahler mirror pairs which are not diffeomorphic.
	\end{abstract}
	
	\maketitle
    \tableofcontents
	
	\section{Introduction}
	A central geometric object in geometry and theoretical physics is given by 3-dimensional \textit{Calabi-Yau manifolds} namely, compact K\"ahler manifolds of complex dimension 3 with trivial canonical bundle.
	More recently, research has also been considering \textit{non-K\"ahler Calabi-Yau} manifolds i.e., compact complex non-K\"ahler manifolds with trivial canonical bundle. Following the works of Strominger \cite{Strominger} and Hull \cite{Hull}, non-K\"ahler Calabi-Yau 3-folds have come to play a central role in the study of string compactifications with flux. This subject has recently generated a great deal of interest in the mathematics literature; see, for instance,  \cite{CGPS}, \cite{CPY}, \cite{DBT}, \cite{DBT1} \cite{FLY}, \cite{Hitchin}, \cite{LTY}, \cite{PhongSurvey}, \cite{FPPZ21}, \cite{FPPZ22}, \cite{PPZ18}, \cite{AAG24}, \cite{GG25}, \cite{GGS26} and the references therein.\\[5pt]
	Given a non-K\"ahler Calabi-Yau manifold $ (M, J) $ of complex dimension $ 3 $, let $ \Omega $ be a $ (3,0) $-form which trivializes $ \Lambda^{3,0}(M) $. Fixing a $ J $-Hermitian metric on $ M $ with associated $ (1,1) $-form $\omega$, we can define oriented submanifolds of $ M $ called \textit{special Lagrangian submanifolds} (briefly, \textit{SLags}). Following Harvey-Lawson \cite{HL}, an oriented submanifold $ L $ is \textit{a SLag of phase} $\theta$ if the following conditions are satisfied:
	\begin{equation}
		\omega_{|_{L}} = 0 \qquad \text{Im}e^{-i\theta}\Omega_{|_{L}} = 0
	\end{equation}
	where $ \theta $ is a constant angle. SLags are concerned with several aspects of a non-K\"ahler Calabi-Yau manifold: they play a role in the topology of non-K\"ahler Calabi-Yau 3-folds and \textit{conifold transitions} (see \cite[theorem 1.1]{CGPS}, \cite{CPY}, \cite{FLY} and the references therein), they minimize a \textit{volume functional} in a given homology class and special Lagrangian torus fibrations are central to non-K\"ahler SYZ mirror symmetry \cite{LTY}.
	Given the definition, it is evident the connection with \textit{Lagrangian submanifolds} of symplectic manifolds $ (M, \omega) $, as well as \textit{special Lagrangian submanifolds} inside \textit{generalized Calabi-Yau manifolds} \cite{DBT}.\\[10pt]
	 One can ask several questions regarding the geometry of special Lagrangian submanifolds; in this work we focus on \textit{existence} and \textit{deformation theory}. Regarding existence, it is of interest to study which kind of SLags exist inside a given manifold, as well as their topology. As an example, in \cite[Section 3]{DBT} there has been constructed a non-K\"ahler generalized Calabi-Yau manifold foliated by special Lagrangian submanifolds.\\[5pt]
     About deformation theory, given a special Lagrangian submanifold $ L $ of a K\"ahler Calabi-Yau manifold it is a well-known Theorem of McLean \cite{Mclean} that the deformation theory is unobstructed, and first order deformations are characterized by 1-forms on $ L $ satisfying an elliptic equation $ \mathcal{L}\alpha = 0$. The well-known \textit{McLean's theorem} \cite{Mclean}, and the computations in \cite{Goldstein}, state that the kernel of $ \mathcal{L} $ is isomorphic to $ H^{1}(L;\R) $ and therefore first order deformations behave \textit{accordingly} to the first Betti number $ b_1(L) $. In the non-K\"ahler Calabi-Yau setting, the same description of first order deformations for SLags in terms of 1-forms still holds: in \cite[Lemma 2.2]{CGPS}, it is proved that $\alpha \in A^{1}(L)$ satisfies $ \mathcal{L}\alpha = 0 $ if and only if the following equations hold true
	 \begin{equation}\label{EquazioniDef}
	 	d\alpha + T\alpha = 0 \qquad d^{*}(|\Omega|\alpha) = 0
	 \end{equation}
	 where $ T $ is an operator defined via $ J $ and $\omega$. In particular, their application to the Iwasawa manifold $ \mathbb{I}_3 $ \cite[Section 3]{CGPS} shows a substantial difference with the symplectic setting; while the deformation theory is again unobstructed, the space of infinitesimal deformations is not determined solely by the topology of $L$.  This suggests that the deformation theory for special Lagrangians in non-K\"ahler Calabi-Yau manifolds may be subtle.  One of the goals of the current paper is to study the behavior of deformations by solving the equations \eqref{EquazioniDef}.\\[5pt]
     As we pointed out, special Lagrangian torus fibrations are the main ingredient to the non-K\"ahler approach to mirror symmetry proposed by Lau-Tseng-Yau \cite{LTY}. Indeed, a construction of semi-flat mirror pairs $(X, \omega, \Omega)$ and $(\check{X}, \check{\omega}, \check{\Omega})$ is obtained by dualizing fibrations in special Lagrangian tori, then showing the correspondence between the \textit{refined} versions of their \textit{Bott-Chern} \cite{BC} and \textit{symplectic Tseng-Yau} cohomologies \cite{TY} via the \textit{Fourier Mukai transform}.\\[10pt]
	 In this work we address these questions on non-K\"ahler Calabi-Yau 3-folds given by compact quotients $ \Gamma \backslash G $, where $ G $ is a Lie group and $\Gamma$ a discrete co-compact subgroup namely, a \textit{lattice}. It is well known that among this kind of quotients we find all \textit{nilmanifolds} and \textit{solvmanifolds} (i.e., quotients of type $ \Gamma \backslash G $ where $ G $ is a simply connected nilpotent resp., solvable Lie group).\\[5pt]
	 Starting from existence, our strategy for finding SLags is to consider distributions of vector fields on $ X $ which satisfy equations \eqref{DefSLag} and integrate to foliations whose leaves are SLags in $ X $. More precisely, we provide \textit{purely algebraic} equations whose evaluation directly tells us if the distribution can give foliations of SLags or not. These equations are presented in Lemma \ref{EqAlgebricheSLags} and from them we find four distributions (see the beginning of Section \ref{SpecialDistr}) which satisfy \eqref{DefSLag} for \textit{all} 3-folds as $ X $ (at least, after the correct convention on notations is fixed). What cannot be stated in general is integrability of the distributions or compactness of the integral leaves. In the examples of Section \ref{Examples} we address these aspects directly.\\[5pt]
	 Moving to first order deformations, we prove by a direct computation that the deformation equations \eqref{EquazioniDef} on $ X $ for SLags coming from the distributions of Section \ref{SpecialDistr} can be written in a simpler form: this is Theorem \ref{TeoremaEquazioniDeformazioni}. However, the solution of the system we find depends on the properties of both the manifold $ X $ and the SLag we are considering.\\[5pt]
     Finally, we compute explicitly the semi-flat mirror of the \textit{completely solvable} and \textit{complex parallelizable} Nakamura manifolds. For the completely solvable Nakamura manifold, our construction agrees at the Lie algebra level with the one presented in \cite{BV} and recently in \cite{CGKM} however, we build the mirror manifold following a geometrical/topological approach.
     Instead, the construction on the complex parallelizable Nakamura manifold is completely new and by the computation of the de Rham cohomology we find a non-diffeomorphic semi-flat mirror pair even if the geometric construction of both sides of the mirror is the same. This is the content of Theorems \ref{TheoremMirrorCPNakamura} and \ref{deRhamCohomCPNakamura}. What really changes is that complete solvability here disappears.
     These constructions also provide different ways of approaching the construction of different kinds of Nakamura manifolds, which also carry symplectic structures satisfying the \textit{Hard Lefschetz Condition} (see for example \cite{DBT1}, \cite{Kasuya1}, \cite{LT}). To our knowledge, these are the first examples of non-diffeomorphic mirror pairs of non-K\"ahler Calabi-Yau manifolds.\\[10pt]
	 The work is structured as follows. In Section \ref{Preliminaries} we recall definitions and results from the non-K\"ahler Calabi-Yau setting and fix notations. In Section \ref{Equations} we deal with the existence problem considering distributions of three vector fields on $ X $. We prove Lemma \ref{EqAlgebricheSLags} and consider the mentioned four special distributions in Proposition \ref{DistribuzioniSingole}. Section \ref{Deformations} is dedicated to first order deformations and the main result is Theorem \ref{TeoremaEquazioniDeformazioni}, in which we present the reduced form of deformations equations \eqref{EquazioniDef}. In Section \ref{Examples} we apply the previous results to the \textit{Iwasawa manifold} and to the \textit{completely solvable 6-dimensional Nakamura manifold}. On $ \mathbb{I}_3 $ we generalize the computations of \cite[Section 3]{CGPS}. Instead, the computations of the Nakamura manifold $ N $ are new and show the different behaviors of SLags, both in  topology and first order deformations. The main results of the section are contained in Theorem \ref{NumBettiSLagsCSNakamura}, Corollary \ref{EsistenzaSLagTopDiverse} and Theorem \ref{TopologyFoliations}, which show that inside the same non-K\"ahler Calabi-Yau manifold we can find SLags with completely different topological behaviors. In Section \ref{MirrorSymmCSNakamura} we address the construction of the mirror of the completely solvable Nakamura manifold, which is possible by the results contained in Theorem \ref{TopologyFoliations} of Section \ref{Examples}. After constructing the symplectic side of the mirror pair, we also compute its refined symplectic Bott-Chern cohomology. Finally, in Section \ref{MirrorSymmCPNakamura} we carry out the same constructions but on the \textit{complex parallelizable Nakamura manifold} and compare the cohomologies of the two sides of the mirror pair, proving Theorems \ref{TheoremMirrorCPNakamura} and \ref{deRhamCohomCPNakamura}.
     
    \section{Preliminaries}\label{Preliminaries}
    We start by recalling definitions and main results from the context of compact non-K\"ahler Calabi-Yau manifolds and the constructions of non-K\"ahler SYZ mirror symmetry. We fix some notations as well.
    \subsection{Non-K\"ahler Calabi-Yau manifolds and special Lagrangian submanifolds}
    We start by recalling the definition of compact non-K\"ahler Calabi-Yau manifolds. 
    \begin{definizione}[\cite{CGPS}]
        A $n$-dimensional compact \textit{non-K\"ahler Calabi-Yau manifold} $X$ is a complex (non-K\"ahler) manifold with trivial canonical bundle. Equivalently, $X$ admits a complex non-K\"ahler structure $J$ for which there exists a nowhere vanishing holomorphic $(n,0)$-form $\Omega$.
    \end{definizione} 
    This work will consider what we call non-K\"ahler Calabi-Yau \textit{3-folds} i.e, of complex dimension 3.
    \begin{definizione}\label{balancedMetric}
        Let $(X, J, \Omega)$ be a non-K\"ahler Calabi-Yau manifold and let $\omega$ be a real $(1,1)$-form defining an Hermitian metric on $X$. The metric $\omega$ is said to be \textit{balanced} if $d\omega^{n-1} = 0$.
    \end{definizione}
    In the case of a 3-fold, the balanced condition reads $d(\omega^2) = 0$. Given a non-K\"ahler Calabi-Yau manifold endowed with a metric $g$ and a $J$-compatible Hermitian metric $\omega$, $(X, J, \Omega, \omega)$, a function $|\Omega|_{g}$ (which can be interpreted as a norm) is defined by the equation
    \begin{equation}\label{NormOmega}
        |\Omega|^{2}_{g}\dfrac{\omega^n}{n!} = \dfrac{i^{n^2}}{2^n}\Omega \wedge \bar\Omega.
    \end{equation}
    \begin{osservazione}
        A structure $(X^3, \Omega, \omega)$ as we defined, with $(X^3, \Omega)$ non-K\"ahler Calabi-Yau manifold of complex dimension 3 and $\omega$ balanced has been considered in \cite{LTY}, where it is called a \textit{supersymmetric structure of type IIB}.
    \end{osservazione}
    \begin{osservazione}
        For the computations we will perform, we recall (see for example \cite{Strominger}, \cite[Equation (2.3)]{CGPS}) that the required supersymmetry condition on $\omega$ reads 
        \begin{equation}\label{conformalBalance}
            d(|\Omega|_{g}\omega^{n-1}) = 0.
        \end{equation}
       This condition is referred to as the \textit{conformally balanced} condition.
        In this work, the norm $|\Omega|_{g}$ will always be a constant therefore, the requirement is equivalent to definition \ref{balancedMetric}. Metrics $\omega$ satisfying \eqref{conformalBalance} have been observed to be critical points of a kind of energy functional;  we refer the reader to \cite{Michelsohn} for background on balanced metrics.
    \end{osservazione}
    Following Harvey-Lawson \cite{HL} we give the definition of \textit{special Lagrangian submanifold}.
    \begin{definizione}
        Let $(X, J, \Omega, \omega)$ be a (possibly non-K\"ahler) Calabi-Yau manifold, with the same notation as before. Let $\dim_{\R}X = 2n$. An oriented submanifold $L$ such that $\dim_{\R}L = n$ and the following equations
        \begin{equation}\label{DefSLag}
            \omega_{|_L} = 0 \qquad \text{Im}e^{-i\theta}\Omega_{|_{L}} = 0
        \end{equation}
         hold is called a \textit{special Lagrangian submanifold of angle $\theta$}.
    \end{definizione}
    \begin{osservazione}
        \begin{enumerate}
            \item The first equation $\omega_{|_L} = 0$ corresponds to the definition of \textit{Lagrangian submanifold} inside a symplectic manifold, with $\omega$ the symplectic form.
            \item The equations \eqref{DefSLag} imply (see \cite{HL}) that the form $(e^{-i\theta}|\Omega|_g^{-1})\Omega_{|_L}$ is a volume form for $L$. 
        \end{enumerate}
    \end{osservazione}
    The first question we address regards the \textit{existence} of SLags inside a given compact non-K\"ahler Calabi-Yau manifold. It is clear that finding SLags corresponds to find submanifolds $L$ satisfying both eqautions \eqref{DefSLag} however, this is not trivial in general. The method we follow is to look for SLags of dimension $3$ as the leaves of foliations arising from the integration of distributions of $3$ vector fields on $X$ satisfying \eqref{DefSLag}. In view of this, we recall that not all distributions can be integrated to submanifolds, only the \textit{involutive} ones (a distribution of vector fields $\{V_i\}_i$ on a manifold is called \textit{involutive} if $[V_i, V_j] $ still belongs to the distribution for every $i,j$). This is the well-known \textit{Frobenius' Theorem}. We also recall that \textit{compactness of the integral leaves is not ensured} by the Frobenius' Theorem or by other known results.\\[5pt]
    The second question we address is about deformation theory, precisely the \textit{first order deformations}.
    Let $(X, J,\Omega, \omega)$ be a compact non-K\"ahler Calabi-Yau manifold as fixed before and let $L \hookrightarrow X$ be a special Lagrangian submanifold. We can study the deformation of $L$ under the diffeomorphisms generated by the flow of vector fields normal to the image of $L$ inside $X$. In \cite[Lemma 2.2]{CGPS} it is proved the following result, which describes the space of these deformations for any given SLag $L$ (not necessarily compact).
    \begin{lemma}[\cite{CGPS}, Lemma 2.2]\label{LemmaDeformations}
        The infinitesimal deformation space of a special Lagrangian submanifold $L \subset (X, J,\Omega, \omega)$ is given by the space of smooth 1-forms $\alpha \in T^{*}L$ satisfying    
        \begin{equation}\label{EquazioniDefomazione}
		  d\alpha + T\alpha = 0 \qquad d(*\alpha) = 0
	    \end{equation}
        where the operator $T$ is defined by $T: \Omega^{1}(L) \rightarrow \Omega^2(L)$, $T\alpha = -d\omega(J\alpha*, .,.)$, being $\alpha^*$ the $g$-dual form of $\alpha$.
    \end{lemma}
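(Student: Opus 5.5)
The plan is to linearize the two defining equations \eqref{DefSLag} along a one-parameter family of deformations of $L$, and to identify normal vector fields with $1$-forms on $L$ using $J$ and $g$. Without loss of generality I take the phase $\theta = 0$, replacing $\Omega$ by $e^{-i\theta}\Omega$. I use the convention $\omega(X,Y) = g(JX,Y)$.

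\textbf{Step 1: normal fields as $1$-forms.} Since $L$ is Lagrangian, $\omega_{|_L}=0$, so $g(JX,Y)=0$ for all $X,Y \in TL$. Hence $J(TL)$ is $g$-orthogonal to $TL$, and by dimension count the normal bundle is $NL = J(TL)$. A normal field can therefore be written uniquely as $V = J\alpha^*$ with $\alpha \in \Omega^1(L)$, where $\alpha^*$ is the $g$-dual vector of $\alpha$ on $L$. Let $V$ be extended arbitrarily and let $f_t$ be its flow. The first-order conditions for $f_t(L)$ to remain special Lagrangian are then
\[
(\mathcal{L}_V\omega)_{|_L}=0 \qquad\text{and}\qquad (\mathcal{L}_V \mathrm{Im}\,\Omega)_{|_L}=0.
\]
Tangential components of $V$ only reparametrize $L$, so they play no role. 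I then evaluate each condition with Cartan's formula $\mathcal{L}_V = d\iota_V + \iota_V d$.

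\textbf{Step 2: the Lagrangian condition.} For $Y\in TL$,
\[
(\iota_V\omega)(Y)=g(J^2\alpha^*,Y)=-\alpha(Y),
\]
so $(d\iota_V\omega)_{|_L}=-d\alpha$. The remaining term is $(\iota_{J\alpha^*}d\omega)_{|_L}$. Because $\omega$ is not closed, this term does not vanish, and it is exactly $-T\alpha$ in the notation of Lemma \ref{LemmaDeformations}. Hence the first condition reads $-(d\alpha + T\alpha)=0$, which gives the first equation in \eqref{EquazioniDefomazione}.

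\textbf{Step 3: the special condition.} Since $\Omega$ is a holomorphic top-degree form, $d\Omega=\bar\partial\Omega=0$. So only $d(\iota_V\mathrm{Im}\,\Omega)_{|_L}$ survives. For an $(n,0)$-form one has $\iota_{Jv}\Omega = i\,\iota_v\Omega$, which gives
\[
\iota_{J\alpha^*}\mathrm{Im}\,\Omega=\mathrm{Re}\,\iota_{\alpha^*}\Omega .
\]
On $L$ we have $\mathrm{Re}\,\Omega_{|_L}=|\Omega|_g\,\mathrm{vol}_L$ (Harvey--Lawson, as in the Remark above). Therefore
\[
(\iota_{\alpha^*}\mathrm{Re}\,\Omega)_{|_L}=|\Omega|_g\,\iota_{\alpha^*}\mathrm{vol}_L=|\Omega|_g *\alpha .
\]
The second condition thus becomes $d(|\Omega|_g*\alpha)=0$, equivalently $d^*(|\Omega|_g\alpha)=0$. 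In the situation of this paper $|\Omega|_g$ is constant, so this reduces to $d(*\alpha)=0$.

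\textbf{Main obstacle.} I expect the delicate step to be the careful bookkeeping of signs and of the phase conventions. This includes the sign in $T$ and the orientation, which is fixed so that $\mathrm{Re}\,\Omega_{|_L}>0$. It also requires checking that $\iota_{\alpha^*}$ of the restricted form agrees with the restriction of $\iota_{\alpha^*}\Omega$. This holds because $\alpha^*$ is tangent to $L$, while the $J$-twisting moves the normal field $J\alpha^*$ to a tangent one. The torsion term $\iota_{J\alpha^*}d\omega$ must simply be kept rather than discarded, since it is precisely $T\alpha$.
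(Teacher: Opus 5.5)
Your proposal is correct. It is the standard McLean-type linearization: write normal fields as $V=J\alpha^*$, apply Cartan's formula to $\omega$ and $\mathrm{Im}\,\Omega$, keep the torsion term $\iota_{J\alpha^*}d\omega$, and use the pointwise Harvey--Lawson identity $\mathrm{Re}\,\Omega_{|_L}=|\Omega|_g\,\mathrm{vol}_L$. This is the argument behind \cite[Lemma 2.2]{CGPS}; the present paper gives no proof of its own and simply quotes the lemma. Your convention $\omega(X,Y)=g(JX,Y)$ agrees with the paper's ($JE_1=E_4$, $\omega=\theta^{14}+\theta^{25}+\theta^{36}$), so you get the stated sign of $T$. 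You also correctly note that the second equation is really $d^*(|\Omega|_g\alpha)=0$, and it reduces to $d(*\alpha)=0$ only because $|\Omega|_g$ is constant in this setting.
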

    Lemma \ref{LemmaDeformations} completely characterizes the space of first order deformations through a system of differential equations.  However, this system is not intrinsic to $L$ thanks to the extrinsic operator $T$.  Using global geometric results in combination with some elliptic theory we will characterize the space of solutions for certain classes of special Lagrangians. In particular, the interest in study the dimension of the first order deformations space lies in understanding whether, given a SLag $L$ its deformations are \textit{unobstructed} i.e., are all first order deformations of $L$ integrable to genuine families SLag submanifolds. This corresponds exactly to being able to find a solution for equations \eqref{EquazioniDefomazione}.\\[5pt]
    It is interesting to compare the dimension of the first order deformations space with the first Betti number $b_1(L)$. In particular, if $\omega$ is a symplectic form, well-known results (\cite{Mclean}, \cite{Goldstein}) state that
    \[\dim\Ker\mathcal{L} = \dim H^1(L;\R) = b_1(L)\]
    where $\mathcal{L}$ is the elliptic operator whose kernel describes the deformations space and gives equations \eqref{EquazioniDefomazione}. This does not hold when $\omega$ is not closed, as the computations in \cite[Section 3]{CGPS} and in Section \ref{Examples} of this paper show. However, we can still compare the two spaces and see if the two dimensions are somehow still related; for example, is it true that the dimension of the first order deformation space is at most $b_1(L)$?

\subsection{Supersymmetric Type IIA and Type IIB $SU(n)$ structures}
Fix a real $2n$-dimensional manifold $X$. We start by recalling the definition of $SU(n)$ structure. We here mainly follow \cite{LTY} however, suitable references for the subject are also \cite{TV} or \cite{BV}.
\begin{definizione}\label{SUnStr}
An \textit{$SU(n)$ structure} on $X$ is a pair  $(\omega, \Omega)$ of differential forms satisfying the following conditions:
\begin{enumerate}
\item $\Omega$ is a nowhere-vanishing complex-valued $n$-form on $X$, inducing an almost complex structure $J$ on $X$ such $\Omega$ is an $(n,0)$ form with respect to this almost complex structure. 
\item $\omega$ is a non-degenerate real $(1,1)$-form with respect to $J$ such that $\omega(\cdot, J \cdot)$ defines a Hermitian metric on $X$. This in particular implies that $\Omega \wedge \omega = 0$.
\end{enumerate}
In particular, it holds that
$$ \Omega \wedge \bar{\Omega} = i^n \, F \cdot \frac{\omega^n}{n!} $$
for some nowhere-vanishing function $F$ on $X$, which is called the \textit{conformal factor} of the $SU(n)$ structure. 
\end{definizione}
\begin{osservazione}
\begin{enumerate}
    \item In the definition of $SU(n)$ structure it is not required that $d\Omega = 0$. However, it holds that $J$ is integrable (and therefore defines a complex structure) if and only if $d\Omega = 0$.
    \item Non-K\"ahler Calabi-Yau manifolds $(X, \Omega)$ with an Hermitian metric $\omega$ carry $SU(n)$ structures naturally.
\end{enumerate}
\end{osservazione}
When $n = 3$, we can impose further conditions on definition \ref{SUnStr}, which physically correspond to supersymmetry of type IIA and IIB.
\begin{definizione}
An $SU(3)$ structure $(X, \omega, \Omega)$ is said to be \textit{supersymmetric of type IIB} if $\Omega$ defines a complex structure and $\omega$ defines a balanced metric i.e.,  $d\Omega = 0$ and $d (\omega^{2}) = 0$.\\
An $SU(3)$ structure $(X, \omega, \Omega)$ is said to be \textit{supersymmetric of Type IIA} if $d \omega = 0$ and $d \mathrm{Re}\, \Omega = 0$.
\end{definizione}
There exist natural systems of equations satisfied by supersymmetric type IIA and IIB structures. Let $F$ be the conformal factor of the $SU(3)$ structure. We can define differential forms
$$\rho_B = 2i \partial \bar{\partial} \left(F^{-1} \cdot \, \omega \right) \qquad \rho_A = dd^{\Lambda} (F \cdot \mathrm{Im}\, \Omega)$$
where $d^\Lambda= d\Lambda - \Lambda d$ (and $\Lambda$ is the adjoint of the Lefschetz operator, see for example \cite{TY}). We get the following systems of equations for a supersymmetric $SU(3)$ structure of type IIB resp., type IIA:
\begin{equation}\label{SystemTypeIIB}
\begin{aligned}
d \Omega &= 0,\\
d(\omega^2) &= 0,\\
\Omega \wedge \bar{\Omega} &= -i \, F \cdot \frac{\omega^3}{6},\\
2i\partial \bar{\partial} \left(F^{-1} \cdot \omega\right) &= \rho_B.
\end{aligned}
\end{equation}
\begin{equation}\label{SystemTypeIIA}
\begin{aligned}
d \omega &= 0,\\
d (\mathrm{Re} \, \Omega) &= 0,\\
\Omega \wedge \bar{\Omega} &= -i \, F \cdot \frac{\omega^3}{6}, \\
dd^{\Lambda} (F \cdot \mathrm{Im}\, \Omega) &= \rho_A.
\end{aligned}
\end{equation}
More details about this type of structures can be find again in \cite{LTY}. Now we can see how these structures are the natural ones to consider in non-K\"ahler SYZ mirror symmetry.
    
\subsection{Non-K\"ahler SYZ mirror symmetry}
We recall the construction of \textit{non-K\"ahler semi-flat mirror pairs} (which follows the analogue well-known construction of the K\"ahler setting). All the material of this section comes from \cite{LTY}.\\[5pt]
Let us describe what is a \textit{semi-flat mirror pair} and how to construct it. Let $(X,\omega)$ be a symplectic manifold and $\pi: X \to B$ a Lagrangian torus bundle. Then there exist the \textit{Arnold-Liouville's action-angle coordinates}: for each $p \in B$ there exists an open neighborhood of $p \in U \subset B$ on which we have \textit{action coordinates} $r_1, \ldots, r_n$ of $U$ and a symplectomorphism $(\pi^{-1} (U), \omega) \cong (T^*U/\Lambda^*, \omega_{\mathrm{can}})$, where $\Lambda^*$ is a lattice bundle generated by $\del r_1, \ldots, \del r_n$. In this way the starting manifold is locally identified with a compact quotient of $T^*U$.
If we denote the corresponding fiber coordinates of $T^*U$ by  $\theta_i$, we can write $\omega = \sum_{i=1}^n d \theta_i \wedge d r_i$.\\[5pt]
We can now take the dual of our torus bundle, denoted by $\check{\pi}: \check{X} \to B$. $\check{X}$ is endowed with a canonical complex structure, constructed in the following way. Locally around each $p \in B$ there exists open neighborhood $p \in U \subset B$ and a biholomorphism $\check{\pi}^{-1} (U) \cong TU/\Lambda$, where $\Lambda \subset TU$ is the lattice bundle generated by $r_1, \ldots, r_n$ (the action coordinates mentioned before). Denoting the corresponding fiber coordinates of $TU$ by $\check{\theta}_i$, we can take on $\check{X}$ complex coordinates $\zeta_J := \exp (\check{\theta}_j + i r_j)$ or $z_j := \check{\theta}_j + i r_j$ for $j=1,\ldots,n$. In particular, the transition between local action coordinate systems on $B$ lies in $GL(n,\Z) \ltimes \R^n$.\\[5pt]
Suppose now that $\check{X}$ is endowed with a holomorphic volume form locally written as 
\[\check{\Omega} = d z_1 \wedge \ldots \wedge d z_n = (d\check{\theta}_1 + i d r_1) \wedge \ldots \wedge (d\check{\theta}_n + i d r_n)\]
(this is always the case when the action coordinate system can be chosen such that any of the already mentioned transitions lies in $SL(n,\Z) \ltimes \Z^n$).
\begin{definizione}
    Manifolds $(X,\omega)$ and $(\check{X}, \check{\Omega})$ as described are said to form a \textit{semi-flat mirror pair}.
\end{definizione}
The question which has to be addressed now is how the two manifolds $(X,\omega)$ and $(\check{X}, \check{\Omega})$ of a semi-flat mirror pair are related. The key for this is the \textit{Fourier-Mukai transform}, which we will now recall.
The Lagrangian torus bundle $X \rightarrow B$ induces via its fibers a \textit{polarization} $\Delta$ on the differential forms on $X$. More precisely, the right space to be considered is $\Omega^{p,q}_{B}(X;\C)$: it is the space of the one of differential forms depending only on the basis $B$ and with a \textit{formal} "bi-grading" such that $p$ runs on $\Delta$ and $q$ on $\Delta^{\perp}$ i.e., 
\[\alpha \in \Omega^{p,q}_{B}(X;\C) \iff \alpha = \sum_{\substack{p+q = k \\ i_1, ..., i_p \\ j_1, ..., j_q}}\alpha_{ij}(r)d\theta_{i_1} \wedge ... \wedge d\theta_{i_p} \wedge dr_{j_1} \wedge ... \wedge dr_{j_q}.\]
A similar space is considered on $\check{X}$: here we just have to take the \textit{actual} $(p,q)$-forms which again depend just on the basis $B$. We denote this space by $\Omega^{p,q}_{B}(\check{X};\C)$.
\begin{definizione}[\cite{LTY}, definition 4.3]
    The \textit{Fourier-Mukai transform} of a differential form $\check{\alpha} \in \Omega^{p,q}_{B}(\check{X};\C)$ is defined by 
    \begin{equation}\label{FMTrasnform}
        \text{FT}(\check{\alpha}) = \check{\pi}_*(\pi^*(\mathcal{P}\cdot \check{\alpha} \wedge e^{\sum d\check{\theta_i} \wedge d\theta_i}))
    \end{equation}
    where the maps $\pi, \check{\pi}$ are given by the fiber product
    \[\pi : (TB/\Lambda) \times (T^*B/\Lambda^*) \rightarrow TB/\Lambda\]
    \[\check{\pi} : (TB/\Lambda) \times (T^*B/\Lambda^*) \rightarrow T^*B/\Lambda^*\]
    and $\mathcal{P}$ is the \textit{polarization switch operator} \cite[definition 4.2]{LTY}.
\end{definizione}   
The next two Theorems from \cite{LTY} are the key to non-K\"ahler SYZ mirror symmetry. In fact, we can see that they connect in pairs non-K\"ahler Calabi-Yau manifolds with symplectic non-K\"ahler manifolds via the Fourier-Mukai transform, together with an isomorphism between the refined Bott-Chern and symplectic Bott-Chern cohomologies.
\begin{teorema}[\cite{LTY}, part of Theorem 5.1]
Let $\check{\omega} \in \Omega^{1,1}_{B}(\check{X};\C)$ be a real $(1,1)$-form on $\check{X}$. Denote by $\Omega$ the Fourier-Mukai transform of $e^{2 \check{\omega}}$. Then the following hold. 
\begin{enumerate}
\item $(\check{X},\check{\omega},\check{\Omega})$ forms an $SU(n)$ structure if and only if $(X, \omega, \Omega)$ forms an $SU(n)$ structure.  For such a pair, the conformal factor $F$ of $(X, \omega, \Omega)$ is related to the conformal factor $\check{F}$ of $(\check{X}, \check{\omega}, \check{\Omega})$ by $\check{F} = 2^{2n} F^{-1}.$
\item $(X, \omega, \Omega)$ is supersymmetric of Type-A if and only if $(\check{X}, \check{\omega}, \check{\Omega})$ is supersymmetric of Type-B.
\end{enumerate}
\end{teorema}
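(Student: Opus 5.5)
We will argue by direct local computation, since all objects are semi-flat and the Fourier--Mukai transform acts fibrewise. In action--angle coordinates on $\pi^{-1}(U)$ write $\omega=\sum_i d\theta_i\wedge dr_i$. On the dual side use $z_j=\check\theta_j+ir_j$, so that $\check\Omega=dz_1\wedge\dots\wedge dz_n$. A real form $\check\omega\in\Omega^{1,1}_B(\check X;\C)$ then reads $\check\omega=\frac{i}{2}\sum_{j,k}g_{jk}(r)\,dz_j\wedge d\bar z_k$, with $(g_{jk})$ Hermitian. Restricting to real symmetric $g$, the $SU(n)$-relevant case, gives $\check\omega=\sum g_{jk}\,d\check\theta_j\wedge dr_k$.

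The first step is to compute $\Omega=\mathrm{FT}(e^{2\check\omega})$ explicitly. We expand $e^{2\check\omega}\wedge e^{\sum d\check\theta_i\wedge d\theta_i}$, apply the polarization switch $\mathcal P$ from \cite[Def.~4.2]{LTY}, and integrate along the $\check\theta$-fibres. Only the top-degree part in $d\check\theta$ survives, and the fibre integral is a Gaussian/Berezin-type integral. The expected outcome is
\[
\Omega = c_n\,(\det g)^{-1/2}\bigwedge_j\Big(d\theta_j + i\sum_k g^{jk}\,dr_k\Big),
\]
up to a normalising constant $c_n$ and the transposition of conventions forced by $\mathcal P$. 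This formula exhibits $\Omega$ as a decomposable complex $n$-form.

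\textbf{Part (1).} We read off the almost complex structure $J$ from $\Omega$: it is the one whose $(1,0)$-forms are $d\theta_j+i\sum_k g^{jk}dr_k$.
\begin{itemize}
\item $\Omega$ is nowhere vanishing if and only if $\det g\neq 0$.
\item $\omega$ is of type $(1,1)$ for this $J$ if and only if $g$ is symmetric, which is exactly the condition that $\check\omega$ is a real $(1,1)$-form compatible with $\check J$.
\item $\omega(\cdot,J\cdot)$ is positive definite if and only if $g^{-1}$ is, that is, if and only if $g>0$, which is positivity of $\check\omega$.
\end{itemize}
The conformal factors follow from
\[
\check\Omega\wedge\bar{\check\Omega}\ \propto\ \det g\ \frac{\check\omega^n}{n!}, \qquad \Omega\wedge\bar\Omega\ \propto\ (\det g)^{-1}\,\frac{\omega^n}{n!}.
\]
Keeping track of the powers of $2$ coming from the exponent $2\check\omega$ and from $dz\wedge d\bar z=-2i\,d\check\theta\wedge dr$ gives $\check F=2^{2n}F^{-1}$. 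The constant is where sign and normalisation care is needed.

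\textbf{Part (2).} We use the fact that all coefficients depend only on $r$.
\begin{itemize}
\item $d\omega=0$ holds automatically on the A-side, and $d\check\Omega=0$ holds automatically on the B-side.
\item Type A additionally requires $d\,\mathrm{Re}\,\Omega=0$. Type B additionally requires $d(\check\omega^{n-1})=0$.
\item Expand $\mathrm{Re}\,\Omega$ in $d\theta$-degree. The component of degree $n-2$ in $d\theta$ involves $(n-1)\times(n-1)$ minors of $(\det g)^{-1/2}g^{-1}$, that is, cofactor expressions in $g$. These are precisely the coefficients of $\check\omega^{n-1}=(n-1)!\sum \mathrm{cof}(g)_{jk}\,\widehat{d\check\theta_j\wedge dr_k}$ after dualising $d\theta\leftrightarrow d\check\theta$.
\item Closedness of each side then amounts to the same divergence-type PDE $\sum_k\partial_{r_k}\mathrm{cof}(g)_{jk}=0$. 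The other degree components of $\mathrm{Re}\,\Omega$ must be checked separately: they are either automatically closed or impose the same condition.
\item More conceptually, one shows that FT intertwines $d$ on $\Omega_B(\check X)$ with $d$ on $\Omega_B(X)$ up to the switch $\mathcal P$. Then $d\,\mathrm{FT}(e^{2\check\omega})=0$ if and only if $d e^{2\check\omega}=0$ in the relevant degrees.
\end{itemize}

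The main obstacle is the bookkeeping in Part (2). We must verify that $d$ commutes with FT, where the exponential kernel is closed but $\mathcal P$ exchanges the roles of $d\theta$ and $dr$. We must also verify that only the $\check\omega^{n-1}$ term, and not $\check\omega$ itself, governs the real-part closedness. We would first attempt the case $n=3$ by hand, then generalise via cofactor identities.
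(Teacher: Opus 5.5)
The paper does not prove this statement; it is quoted from Lau--Tseng--Yau, so your proposal has to stand on its own. Your strategy is the natural one: compute $\Omega=\mathrm{FT}(e^{2\check\omega})$ in action--angle coordinates, read off $J$, then compare conformal factors and closedness conditions. The problem is that the formula every later step rests on is wrong.

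\textbf{The Fourier--Mukai formula.} Write $\beta_j:=\sum_k g_{jk}\,dr_k$, so that $\check\omega=\sum_j d\check\theta_j\wedge\beta_j$. Since $\mathcal P(2\check\omega)=i\check\omega$ (as in the paper's computation of $\mathrm{FT}(e^{2\check\omega})$ for $N$), the integrand is
\[
\exp\Big(\sum_j d\check\theta_j\wedge(d\theta_j+i\beta_j)\Big)=\prod_j\big(1+d\check\theta_j\wedge(d\theta_j+i\beta_j)\big).
\]
This is \emph{linear} in each $d\check\theta_j$. There is no quadratic term in the fibre variables, so nothing Gaussian and no determinant appears. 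The fibre integral just extracts the top $d\check\theta$-component:
\[
\Omega=c\,\bigwedge_j\Big(d\theta_j+i\sum_k g_{jk}\,dr_k\Big),\qquad c=\pm\mathrm{vol}(\Lambda).
\]
So $g$ appears, not $g^{-1}$, and there is no factor $(\det g)^{-1/2}$. The paper's example confirms this: after pulling out $e^{\pm\lambda r_0}$, its factors are $d\theta_1+ie^{-2\lambda r_0}dr_1$ and $d\theta_2+ie^{2\lambda r_0}dr_2$, i.e.\ the entries $g_{11},g_{22}$ themselves.

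\textbf{Why this matters.} This is not a normalisation issue, since $(\det g)^{-1/2}$ and $g^{-1}$ are functions of $r$, and your later claims do not follow from your formula.
\begin{enumerate}
\item Relative to your $J$, with $\eta'_j=d\theta_j+i\sum_k(g^{-1})_{jk}dr_k$, one gets $\omega=\frac i2\sum g_{jk}\eta'_j\wedge\bar\eta'_k$. Your $\Omega$ then gives $F\propto(\det g)^{-2}$.
\item On the other side $\check F\propto(\det g)^{-1}$. Your display $\check\Omega\wedge\bar{\check\Omega}\propto\det g\,\check\omega^n/n!$ is inverted, since $\check\omega^n/n!=\det g\,\bigwedge_j d\check\theta_j\wedge dr_j$. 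You reach $\check F=2^{2n}F^{-1}$ only through compensating slips.
\item In Part (2), the top $d\theta$-component of your $\mathrm{Re}\,\Omega$ is $(\det g)^{-1/2}d\theta_1\wedge d\theta_2\wedge d\theta_3$. This is not closed unless $\det g$ is constant. For example, with $g=\mathrm{diag}(e^{r_1},1,1)$, where $\check\omega$ is even K\"ahler, your $\Omega$ would fail to be Type A.
\end{enumerate}

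\textbf{With the correct $\Omega$, everything closes up.} Set $\eta_j=d\theta_j+i\beta_j$.
\begin{enumerate}
\item Relative to $J_\Omega$, $\omega=\frac i2\sum(g^{-1})_{jk}\eta_j\wedge\bar\eta_k$. So $\omega$ is $(1,1)$ iff $g$ is symmetric, and positive iff $g>0$.
\item $F=\pm2^nc^2\det g$ and $\check F=\pm2^n(\det g)^{-1}$ with the same sign. Hence $F\check F=2^{2n}c^2$, and the stated relation presupposes unit fibre volume.
\item For Part (2), expand $\Omega=c\sum_m i^m\Omega_m$ by $dr$-degree. Both $d\Omega_m=0$ and $d(\check\omega^m)=0$ are equivalent to $d(\beta_{s_1}\wedge\cdots\wedge\beta_{s_m})=0$ for all $s_1<\cdots<s_m$.
\item For $n=3$, $\mathrm{Re}\,\Omega=c(\Omega_0-\Omega_2)$ with $\Omega_0$ closed. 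Therefore Type A $\iff d(\check\omega^2)=0\iff$ Type B.
\end{enumerate}
It is the parity of $m$, not cofactor identities, that selects $\check\omega^2$. For general $n$ one would get $d(\check\omega^{2k})=0$ for all $k$, which is not balancedness. So do not try to generalise beyond $n=3$.

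\textbf{Real $g$.} Restricting to real $g$ is not ``the $SU(n)$-relevant case'' on $\check X$: a positive Hermitian $g$ with imaginary part $g_I\neq0$ gives a perfectly good $SU(n)$ structure there. The restriction is forced by the A-side. With $g_I\neq0$, $\omega$ acquires a component $\sum_{j,k}(g_I)_{jk}\,dr_k\wedge dr_j$, which is not of type $(1,1)$ for $J_\Omega$. So real $g$ must be part of the semi-flat hypotheses.
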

\begin{teorema}[\cite{LTY}, Theorem 4.5]
The Fourier-Mukai transform $\text{FT}$ gives an isomorphism
$$ \left(\Omega_B^* (X,\C), \frac{(-1)^ni}{2} d, \frac{(-1)^ni}{2} d^{\Lambda}\right) \cong \left(\Omega_B^* (\check{X},\C), \bar{\partial}, \partial\right) $$
as double complexes. In particular, $h^{p,q}_{B,TY}(X) = h^{n-p,q}_{B,BC}(\check{X})$ i.e., we have mirror Hodge diamonds for the refined cohomologies, which are defined by 
\begin{equation} \label{RefinedBCandTY}
H_{B,BC}^{p,q} (\check{X}) := \frac{\ker (d) \cap \Omega_B^{p,q}(\check{X})}{\text{im} (\partial\bar{\partial}) \cap \Omega_B^{p,q}(\check{X})} \qquad 
H^{(p,q)}_{B,d + d^\Lambda}(X) := \frac{\ker (d + d^\Lambda) \cap \Omega_B^{p,q}(X)}{\text{im} (dd^\Lambda) \cap \Omega_B^{p,q}(X)}.
\end{equation}
\end{teorema}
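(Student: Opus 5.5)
The plan is to reduce everything to a local, linear-algebraic computation in action-angle coordinates and then glue. Over an open set $U\subset B$ with action coordinates $r_1,\dots,r_n$, I will write elements of $\Omega^*_B(X,\C)$ as sums of $f(r)\,d\theta_I\wedge dr_J$. Elements of $\Omega^*_B(\check X,\C)$ will be sums of $g(r)\,dz_I\wedge d\bar z_J$ with $z_j=\check\theta_j+ir_j$. Since the transition maps between action coordinate systems lie in $SL(n,\Z)\ltimes\Z^n$, the forms $e^{\sum d\check\theta_i\wedge d\theta_i}$, $\omega=\sum d\theta_i\wedge dr_i$ and the complex structure on $\check X$ are globally defined. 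Hence it suffices to prove all identities on a single chart and check that the formulas are invariant under such transitions, which is a routine linear-algebra check.

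\textbf{Step 1: explicit formula for $\text{FT}$.} I will compute $\text{FT}$ on monomials. Fiber integration over the $\check\theta$-torus of $\mathcal P\check\alpha\wedge e^{\sum d\check\theta_i\wedge d\theta_i}$ only retains the terms in which every $d\check\theta_i$ appears exactly once. In each such term, each $d\check\theta_i$ that is absent from $\mathcal P\check\alpha$ is supplied by the factor $d\check\theta_i\wedge d\theta_i$ of the exponential, which contributes a $d\theta_i$. So $\text{FT}$ acts on the fiber directions as a Fourier/Hodge-type duality $\Lambda^p\to\Lambda^{n-p}$ (up to signs), while the $dr$ directions and the coefficient function $g(r)$ pass through unchanged. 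The role of the polarization switch $\mathcal P$ is to rewrite $dz_j$ and $d\bar z_j$ in terms of the real polarization $\{d\check\theta,dr\}$, so that $\Omega^{p,q}_B(\check X)$ corresponds to forms with $p$ entries in $\Delta$ and $q$ in $\Delta^\perp$. From the explicit formula it is immediate that $\text{FT}$ is a linear bijection; its inverse is the same construction with $e^{-\sum d\check\theta_i\wedge d\theta_i}$, up to sign. It also sends $\Omega^{p,q}_B(\check X)$ to $\Omega^{n-p,q}_B(X)$, which gives the claimed shift of bidegree.

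\textbf{Step 2: the operators in coordinates.} Because all coefficients depend only on $r$, on $X$ we have $d=\sum_j dr_j\wedge\partial_{r_j}$. With $\omega=\sum d\theta_j\wedge dr_j$, the adjoint Lefschetz operator is contraction by $\sum\iota_{\partial_{r_j}}\iota_{\partial_{\theta_j}}$, up to sign. Then a direct commutator computation gives
$$
d^\Lambda=[d,\Lambda]=\pm\sum_j \iota_{\partial_{\theta_j}}\partial_{r_j}.
$$
On $\check X$, for $g=g(r)$ we have $\partial g/\partial z_j=-\tfrac{i}{2}\,\partial_{r_j}g$ and $\partial g/\partial\bar z_j=\tfrac{i}{2}\,\partial_{r_j}g$. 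Therefore $\bar\partial=\tfrac{i}{2}\sum_j d\bar z_j\wedge\partial_{r_j}$ and $\partial=-\tfrac{i}{2}\sum_j dz_j\wedge\partial_{r_j}$.

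\textbf{Step 3: intertwining.} Under the duality of Step 1, wedging with $dr_j$ corresponds to wedging with $d\bar z_j$: both are ``$\Delta^\perp$'' directions and both raise $q$. Wedging with $dz_j$ (a $\Delta$-direction on $\check X$) corresponds to contraction $\iota_{\partial_{\theta_j}}$ on $X$, because Fourier duality exchanges wedge and contraction in the fiber variables. Combining this with Step 2 yields
$$
\text{FT}\circ\bar\partial=\tfrac{(-1)^ni}{2}\,d\circ\text{FT},\qquad \text{FT}\circ\partial=\tfrac{(-1)^ni}{2}\,d^\Lambda\circ\text{FT}.
$$
Since $\text{FT}$ is a bijection compatible with the bigradings, it is an isomorphism of double complexes. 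The statement about the refined cohomologies is then formal: $\ker d\cap\Omega^{p,q}_B(\check X)$ corresponds to $\ker\bigl(d+d^\Lambda\bigr)\cap\Omega^{n-p,q}_B(X)$. This uses that on $\Omega^{p,q}_B(\check X)$ we have $\ker d=\ker\partial\cap\ker\bar\partial$, because $\partial\alpha$ and $\bar\partial\alpha$ have different bidegrees, and similarly $\ker(d+d^\Lambda)=\ker d\cap\ker d^\Lambda$ on each pure bidegree. Likewise $\operatorname{im}\partial\bar\partial$ corresponds to $\operatorname{im}dd^\Lambda$. This gives $h^{p,q}_{B,TY}(X)=h^{n-p,q}_{B,BC}(\check X)$.

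The main obstacle is Step 3: the bookkeeping of signs and the constant $\tfrac{(-1)^n i}{2}$ through the polarization switch $\mathcal P$ and the fiber integration, including the Koszul signs from moving $d\theta_j$ past the $dr$-factors. I would handle this by first verifying the identities for $n=1$ on the four monomials $1$, $dz$, $d\bar z$, $dz\wedge d\bar z$. I would then use the fact that $\text{FT}$ factors as a graded tensor product over the coordinates $j=1,\dots,n$, so the general case reduces to the one-dimensional one plus a global sign, which produces the $(-1)^n$.
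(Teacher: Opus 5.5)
The paper does not prove this statement. It is quoted from Lau--Tseng--Yau \cite{LTY}, Theorem 4.5, and your outline is the standard argument behind that result, so your proposal is correct. That argument is a local computation in action--angle coordinates: $\text{FT}$ acts as a fiberwise Hodge-type duality that exchanges $dz_j\wedge$ with $\iota_{\partial_{\theta_j}}$ and $d\bar z_j\wedge$ with $dr_j\wedge$, and the statement about refined cohomologies then follows formally. For the sign bookkeeping you flag, you can bypass the reduction to $n=1$. Use $\iota_{\partial_{\theta_j}}e^{\sum_k d\check\theta_k\wedge d\theta_k}=-d\check\theta_j\wedge e^{\sum_k d\check\theta_k\wedge d\theta_k}$, and note that moving $dr_j\wedge$ or $\iota_{\partial_{\theta_j}}$ past the $n$-dimensional fiber integral (with the fiber integrated from the left) costs $(-1)^n$. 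Take $\mathcal P(dz_j)=d\check\theta_j$, $\mathcal P(d\bar z_j)=dr_j$ (consistent with $\mathcal P\cdot e^{2\check\omega}=e^{i\check\omega}$) and $\Lambda=\sum_j\iota_{\partial_{r_j}}\iota_{\partial_{\theta_j}}$. Then $d$ and $d^\Lambda$ both come out with the same constant $\tfrac{(-1)^n i}{2}$.
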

    
    \subsection{Compact quotients of Lie groups}
    We examine the two questions stated before in a precise setting, which is given by \textit{compact quotients of Lie groups by lattices}, where by \textit{lattice} we mean a discrete and co-compact subgroup.\\[5pt] 
    Precisely, we consider compact quotients of type $X=\Gamma \backslash G$, where $G$ is a connected and simply-connected Lie group and  $\Gamma$ is a lattice in $G$. We recall that in this setting $\pi_{1}(\Gamma \backslash G) \simeq \Gamma$. Two special cases of this construction are given by \textit{nilmanifolds} resp., \textit{solvmanifolds}: they correspond exactly to the group $G$ being a nilpotent resp., solvable Lie group. 
    \subsection{Setting and notations}
	We here fix the setting as well as some notations for this work. Let $ X = \Gamma \backslash G $ be a compact 6-dimensional manifold build as compact quotient of a connected and simply connected Lie group $G$ by a lattice $\Gamma$.\\[5pt]
    Suppose that $ X $ carries the structure of a compact non-K\"ahler Calabi-Yau manifold, with complex structure $ J $ given by a global co-frame of $ (1,0) $-forms $ \{\varphi^{1}, \varphi^{2}, \varphi^{3}\} $ such that the Calabi-Yau form $ \Omega $ is given by
	\begin{equation*}\label{FormaCalabiYau}
		\Omega = \varphi^{123}.
	\end{equation*}
	Such $ X $ carries a standard Hermitian metric $ g $ with associated $ (1,1) $-form
	\begin{equation*}\label{MetricaHermitiana}
		\omega = \dfrac{i}{2}(\varphi^{1\bar1} + \varphi^{2\bar2} + \varphi^{3\bar3})
	\end{equation*}
	which we require to be \textit{balanced} i.e., $ d\omega^2 = 0 $.\\[5pt]
	Setting $ \varphi^{i} = \theta^{i} + \sqrt{-1}\theta^{3+i} $, $ i= 1,2,3 $, we get a global co-frame of 1-forms $ \{\theta^{1}, ..., \theta^{6}\} $. Precisely, $ \theta^{1}, \theta^{2} $ and $ \theta^3 $ are the real parts of the $ (1,0) $ co-frame, while $ \theta^{4}, \theta^{5} $ and $ \theta^6 $ are the imaginary ones. The structure equations for this co-frame are, in their most generic form, given by
	\begin{equation*}\label{EqStrutturaGeneriche}
		d\theta^{i} = \sum_{1\le j < k \le 6}^{}f^{i}_{jk}\theta^j \wedge \theta^k \qquad f^{i}_{jk} \in C^{\infty}(X;\R).
	\end{equation*}
    We will occasionally make use of the following notation: $\theta^{ij} = \theta^{i} \wedge \theta^{j}$. Finally, we denote by $ \{E_1, ..., E_6\} $ the global frame for $ X $ dual to $ \{\theta^{1}, ..., \theta^{6}\} $.
	
	\section{Equations for SLags generated by invariant distributions}\label{Equations}
	Let $ X = \Gamma \backslash G $ be as in the previous section. Our strategy for finding \textit{SLags} is to find them as the foliation's 3-dimensional leaves coming from the integration of involutive distributions of triplets of vectors fields satisfying our SLags equations \eqref{DefSLag}. We point out that this method does not ensure compactness of the leaves i.e., that the SLags we find are compact. We address compactness in the examples of Section \ref{Examples}.\\[5pt]
	Consider an \textit{involutive distribution} $ \{V_1, V_2, V_3\} $ on $ X $ of the form
	\begin{equation}\label{GenericaDistribuzione}
		V_i = \sum_{j = 1}^{6} a_{ij}E_j \qquad a_{ij} \in C^{\infty}(X), i = 1,2,3.
	\end{equation}
	To such distribution we associate a $ 3 \times 6 $ matrix which collects all the coefficients $a_{ij}$:
	\begin{equation}\label{MatriceDistribuzione}
		A = \begin{pmatrix}
			a_{11} & a_{12} & a_{13} & a_{14} & a_{15} & a_{16} \\
			a_{21} & a_{22} & a_{23} & a_{24} & a_{25} & a_{26} \\
			a_{31} & a_{32} & a_{33} & a_{34} & a_{35} & a_{36} \\
		\end{pmatrix} =: \begin{pmatrix}
			x_1 & x_2 & x_3 & x_4 & x_5 & x_6 \\
			x_7 & x_8 & x_9 & x_{10} & x_{11} & x_{12} \\
			x_{13} & x_{14} & x_{15} & x_{16} & x_{17} & x_{18} \\
	\end{pmatrix}\\
	\end{equation}
	where we have denoted the coefficients with the variables $ x_i $ to write clearer equations in our main result. Combining equations \eqref{DefSLag} with the distributions of type \eqref{GenericaDistribuzione} we get the following lemma.
	\begin{lemma}\label{LemmaEqAlg}
		The distributions of type \eqref{GenericaDistribuzione} satisfying the \textit{SLags} equations \eqref{DefSLag} are the ones whose coefficients satisfy the system
		\begin{align}\label{EqAlgebricheSLags}
			\begin{cases}
				x_1x_{10} - x_4x_7 + x_2x_{11} - x_5x_8 + x_3x_{12} - x_6x_9 = 0 \\
				x_1x_{16} - x_4x_{13} + x_2x_{17} - x_5x_{14} + x_3x_{18} - x_6x_{15} = 0 \\
				x_7x_{16} - x_{10}x_{13} + x_8x_{17} - x_{11}x_{14} + x_9x_{18} - x_{12}x_{15} = 0 \\
				x_{1}x_{8}x_{18} - x_{1}x_{12}x_{14} - x_{2}x_{7}x_{18} +x_{2}x_{12}x_{13} + x_{6}x_{7}x_{14} - x_{6}x_{8}x_{13} + \\
				-x_{1}x_{9}x_{17} + x_{1}x_{11}x_{15} + x_{3}x_{7}x_{17} - x_{3}x_{11}x_{13} - x_{5}x_{7}x_{15} + x_{5}x_{9}x_{13} +  \\
				+x_{2}x_{9}x_{16}- x_{2}x_{10}x_{15} - x_{3}x_{8}x_{16} + x_{3}x_{10}x_{14} + +x_{4}x_{8}x_{15} - x_{4}x_{9}x_{14} + \\
				- x_{4}x_{11}x_{18} + x_{4}x_{12}x_{17} + x_{5}x_{10}x_{18} - x_{5}x_{12}x_{16} - x_{6}x_{10}x_{17} + x_{6}x_{11}x_{16} = 0.
			\end{cases}
		\end{align}
	\end{lemma}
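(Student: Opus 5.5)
The plan is to impose the two conditions in \eqref{DefSLag} pointwise on the frame $\{V_1,V_2,V_3\}$, with phase $\theta=0$, which is the normalization the system \eqref{EqAlgebricheSLags} implicitly uses. Since $V_1,V_2,V_3$ span the tangent space of the leaves (the distribution is assumed of rank $3$ and involutive), a $2$-form vanishes on the leaves if and only if it vanishes on each pair $(V_a,V_b)$, $a<b$. Likewise a $3$-form vanishes on the leaves if and only if it vanishes on the triple $(V_1,V_2,V_3)$. So the lemma reduces to two tasks: express $\omega$ and $\mathrm{Im}\,\Omega$ in the real coframe $\{\theta^1,\dots,\theta^6\}$, and then evaluate them by multilinearity on the vectors $V_a=\sum_j a_{aj}E_j$.

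First I rewrite $\omega$. Using $\varphi^k=\theta^k+\sqrt{-1}\,\theta^{3+k}$ one gets $\varphi^{k}\wedge\bar\varphi^{k}=-2\sqrt{-1}\,\theta^{k}\wedge\theta^{3+k}$, hence
\[
\omega=\theta^{14}+\theta^{25}+\theta^{36}.
\]
Evaluating on $(V_a,V_b)$ with $\{E_j\}$ dual to $\{\theta^j\}$ gives
\[
\omega(V_a,V_b)=\sum_{k=1}^3\bigl(a_{ak}a_{b,k+3}-a_{a,k+3}a_{bk}\bigr).
\]
For $(a,b)=(1,2),(1,3),(2,3)$, translated into the variables $x_i$ of \eqref{MatriceDistribuzione}, these are exactly the first three equations of \eqref{EqAlgebricheSLags}.

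Next I expand $\Omega=(\theta^1+\sqrt{-1}\theta^4)\wedge(\theta^2+\sqrt{-1}\theta^5)\wedge(\theta^3+\sqrt{-1}\theta^6)$. The imaginary part collects the terms with one or three factors from $\{\theta^4,\theta^5,\theta^6\}$:
\[
\mathrm{Im}\,\Omega=\theta^{126}+\theta^{153}+\theta^{423}-\theta^{456}.
\]
A decomposable $3$-form $\theta^{ijk}$ evaluated on $(V_1,V_2,V_3)$ is the $3\times3$ minor of $A$ on columns $i,j,k$, with the sign given by the ordering of $(i,j,k)$. Therefore $\mathrm{Im}\,\Omega(V_1,V_2,V_3)$ is the signed sum of the minors on column triples $(1,2,6)$, $(1,5,3)$, $(4,2,3)$ and $(4,5,6)$. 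Expanding these four determinants produces the $24$ cubic monomials of the fourth equation. For instance, the minor on columns $1,2,6$ gives $x_1x_8x_{18}-x_1x_{12}x_{14}-x_2x_7x_{18}+x_2x_{12}x_{13}+x_6x_7x_{14}-x_6x_8x_{13}$, which is the first row of that equation.

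The only delicate point is sign bookkeeping. I have to check the orientation conventions (e.g. $\theta^{153}=-\theta^{135}$ and $\theta^{423}=-\theta^{243}$) and the overall sign of $-\theta^{456}$ against the displayed terms, including the stray double sign "$++$" in the statement. I would verify this monomial by monomial. Finally I will remark that for a general phase $\theta$ the same argument applies to $\cos\theta\,\mathrm{Im}\,\Omega-\sin\theta\,\mathrm{Re}\,\Omega$, so the system as stated corresponds to $\theta=0$.
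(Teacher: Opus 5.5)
Your proposal is correct and follows the paper's proof essentially step for step. You write $\omega=\theta^{14}+\theta^{25}+\theta^{36}$ and $\mathrm{Im}\,\Omega=\theta^{126}-\theta^{135}+\theta^{234}-\theta^{456}$ in the real coframe (your $\theta^{153}$, $\theta^{423}$ reorder to these), then evaluate on the pairs and on the triple $(V_1,V_2,V_3)$ as signed $2\times 2$ and $3\times 3$ minors of $A$; the four signed minors reproduce the four rows of the cubic equation exactly, and the pairings give the three quadratic ones.
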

	\begin{proof}
		We need to evaluate equations \eqref{DefSLag} on the vector fields $ V_1, V_2, V_3 $. The first equation $ \omega_{|_{L}} = 0 $ translates into the three equations
		\[ \omega(V_1, V_2) = 0 \qquad \omega(V_1, V_3) = 0 \qquad \omega(V_2, V_3) = 0. \]
		In real terms, $\omega$ is given by 
		\[ \omega = \theta^{14} + \theta^{25} + \theta^{36} \]
		therefore we just need to evaluate $ \theta^{14}, \theta^{25}  $ and $ \theta^{36} $ on the pairs $ (V_1, V_2), (V_1, V_3)$ and $ (V_2, V_3) $. Since we chose our frame and co-frame to be dual, it always holds (up to a multiplicative constant) that:
		\[ 	\theta^{ij}(V_k, V_h) = \theta^{i}(V_k)\theta^{j}(V_h) - \theta^{i}(V_h)\theta^{j}(V_k) = a_{ki}a_{hj} - a_{hi}a_{kj}. \]
		Therefore, we get:\\
		\begin{align*}
			 \theta^{14}(V_1, V_2) &= a_{11}a_{24} - a_{21}a_{14} \quad \theta^{25}(V_1, V_2) &= a_{12}a_{25} - a_{22}a_{15} \quad \theta^{36}(V_1, V_2) &= a_{13}a_{26} - a_{23}a_{16} \\
			 \theta^{14}(V_1, V_3) &= a_{11}a_{34} - a_{31}a_{14} \quad \theta^{25}(V_1, V_3) &= a_{12}a_{35} - a_{32}a_{15} \quad \theta^{36}(V_1, V_3) &= a_{13}a_{36} - a_{33}a_{16} \\
			 \theta^{14}(V_2, V_3) &= a_{21}a_{34} - a_{31}a_{24} \quad \theta^{25}(V_2, V_3) &= a_{22}a_{35} - a_{32}a_{25} \quad \theta^{36}(V_2, V_3) &= a_{23}a_{36} - a_{33}a_{26} \\
		\end{align*}
		which substituting the $ a_{ij} $ with the $ x_k $ from the matrix equality \eqref{MatriceDistribuzione} and summing each line gives the first three equations of the system \eqref{EqAlgebricheSLags}.\\[5pt]
		For the second equation $ \im\Omega_{|_{L}} = 0 $ we firstly need to express $ \Omega $ in terms of our real co-frame. This gives:
		\[ \im\Omega =  \theta^{126} - \theta^{456} + \theta^{234} - \theta^{135} \]
		therefore we need to evaluate these four terms on the triplet $ (V_1, V_2, V_3) $. As before we have (up to a multiplicative constant):
		\[ \theta^{ijk}(V_1, V_2, V_3) = a_{1i}a_{2j}a_{3k} - a_{1i}a_{2k}a_{3j} - a_{1j}a_{2i}a_{3k} - a_{1k}a_{2j}a_{3i} + a_{1k}a_{2i}a_{3j} + a_{1j}a_{2k}a_{3i}. \]
		This then gives:
		\begin{align*}
			\theta^{126}(V_1, V_2, V_3) &= a_{11}a_{22}a_{36} - a_{11}a_{26}a_{32} - a_{12}a_{21}a_{36} - a_{16}a_{22}a_{31} + a_{16}a_{21}a_{32} + a_{12}a_{26}a_{31} \\
			\theta^{456}(V_1, V_2, V_3) &= a_{14}a_{25}a_{36} - a_{14}a_{26}a_{35} - a_{15}a_{24}a_{36} - a_{16}a_{25}a_{34} + a_{16}a_{24}a_{35} + a_{15}a_{26}a_{34} \\
			\theta^{234}(V_1, V_2, V_3) &= a_{12}a_{23}a_{34} - a_{12}a_{24}a_{33} - a_{13}a_{22}a_{34} - a_{14}a_{23}a_{32} + a_{14}a_{22}a_{33} + a_{13}a_{24}a_{32} \\
			\theta^{135}(V_1, V_2, V_3) &= a_{11}a_{23}a_{35} - a_{11}a_{25}a_{33} - a_{13}a_{21}a_{35} - a_{15}a_{23}a_{31} + a_{15}a_{21}a_{33} + a_{13}a_{25}a_{31} \\
		\end{align*}
		which again substituting from matrix \eqref{MatriceDistribuzione} gives the fourth equation of \eqref{EqAlgebricheSLags} and concludes the proof.
	\end{proof}
	\begin{osservazione}
		Lemma \ref{LemmaEqAlg} provides \textit{purely algebraic} equations to identify possible integrable distributions which can then generate \textit{SLags}. We remark that here we do not take into account involutivity of the distributions. In fact, this is a fact depending on the behaviour of vector fields on the manifold $ X $ and therefore cannot be checked in a generic setting. However, the advantage of equations \eqref{EqAlgebricheSLags} lies in the fact that they can be easily evaluated and therefore applied to a wide range of examples.\\[2pt]
		We wish to point out another aspect: if we have an involutive distribution satisfying \eqref{EqAlgebricheSLags}, we can not tell in general if the \textit{SLag} submanifolds to which the distribution integrates are compact. As involutivity, this property depends on the group $ G $ and the lattice $ \Gamma $. In Section \ref{Examples}, we show some compactness explicitly for some SLags.
	\end{osservazione}

	\subsection{A special case: distributions of type $ \{E_i, E_j, E_k\} $}\label{SpecialDistr}
	The simplest examples of distributions $ \{V_1, V_2, V_3\} $ as \eqref{GenericaDistribuzione} are the ones where just one coefficient $ a_{ij} $ for vector field appears. These distributions include all the ones of type $ \{E_i, E_j, E_k\} $, for $ i \ne j \ne k $, which translates in $ a_{1i} = a_{2j} = a_{3k} = 1 $ and the rest of the matrix $ A $ being zero. These are the ones we consider from now on.\\[5pt]
    It is easy to check which of these distributions satisfy the systems of Lemma \ref{LemmaEqAlg}. We find the following.
	\begin{proposizione}\label{DistribuzioniSingole}
		Let $ X $ be as fixed in Section \ref{Preliminaries}. Then the only distributions of type $ \{E_i, E_j, E_k\} $ which satisfy the SLags equations \eqref{DefSLag} are given by
		\begin{align*}
			\{E_1, E_2, E_3\} \qquad \{E_1, E_5, E_6\} \qquad \{E_2, E_4, E_6\} \qquad \{E_3, E_4, E_5\} 
		\end{align*}
	\end{proposizione}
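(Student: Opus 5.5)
The plan is to evaluate both conditions in \eqref{DefSLag} directly on the frame vectors, using the formulas from the proof of Lemma \ref{LemmaEqAlg}. For a distribution $\{E_i, E_j, E_k\}$ the matrix $A$ in \eqref{MatriceDistribuzione} has a single entry equal to $1$ in each row and zeros elsewhere. Consequently every $2$-form $\theta^{ab}$ evaluated on a pair from the triple, and every $3$-form $\theta^{abc}$ evaluated on the triple, is either $0$ or $\pm 1$. It is nonzero exactly when $\{a,b\}$, respectively $\{a,b,c\}$, is contained in, respectively equals, $\{i,j,k\}$.

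\textbf{Step 1 (Lagrangian condition).} Since $\omega = \theta^{14}+\theta^{25}+\theta^{36}$, we have $\omega(E_p,E_q)\neq 0$ for $p,q\in\{i,j,k\}$ if and only if $\{p,q\}$ is one of the pairs $\{1,4\}$, $\{2,5\}$, $\{3,6\}$. So $\omega$ restricts to zero if and only if $\{i,j,k\}$ contains no such pair. A $3$-element subset of $\{1,\dots,6\}$ avoiding all three pairs must contain exactly one index from each pair. This leaves exactly $2^3=8$ candidate triples:
\[
\{1,2,3\},\ \{1,2,6\},\ \{1,5,3\},\ \{1,5,6\},\ \{4,2,3\},\ \{4,2,6\},\ \{4,5,3\},\ \{4,5,6\}.
\]

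\textbf{Step 2 (special condition, phase $0$).} Using $\im\Omega=\theta^{126}-\theta^{456}+\theta^{234}-\theta^{135}$, the form $\im\Omega$ vanishes on $(E_i,E_j,E_k)$ if and only if $\{i,j,k\}$ is none of $\{1,2,6\}$, $\{4,5,6\}$, $\{2,3,4\}$, $\{1,3,5\}$. Removing these four from the list in Step 1 leaves precisely
\[
\{1,2,3\},\quad \{1,5,6\},\quad \{2,4,6\},\quad \{3,4,5\}.
\]
As a consistency check, I would expand $\mathrm{Re}\,\Omega$ from $\varphi^{123}$; it should be $\theta^{123}-\theta^{156}-\theta^{246}-\theta^{345}$. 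Each surviving triple is then seen to carry a nonzero value of $\mathrm{Re}\,\Omega$, as it must, since $\Omega$ restricts to a volume form on a SLag. The four discarded triples are exactly the ones on which $\mathrm{Re}\,\Omega$ vanishes and $\im\Omega$ does not.

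\textbf{Main obstacle.} The delicate point is the phase. Read literally, "SLag of some phase $\theta$" would also admit the four discarded triples, as SLags of phase $\pi/2$. So I would state explicitly that the proposition refers to the convention fixed in Lemma \ref{LemmaEqAlg}, namely phase $0$ (the condition $\im\Omega_{|_L}=0$). Under that convention the exclusion in Step 2 is exact. The only remaining care is the bookkeeping of the real expansion of $\varphi^{123}$: sign errors there do not affect which triples survive, only the orientations.
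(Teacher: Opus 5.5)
Your proposal is correct and is essentially the paper's argument. The paper only says the result follows by evaluating the system of Lemma \ref{LemmaEqAlg} on the $0/1$ matrices, and that is exactly your evaluation of $\omega$ and $\im\Omega$ on the triples; your phase-$0$ caveat also matches the paper, which treats the other four triples separately in Proposition \ref{SpecialDistrRotated}. One small slip in your optional check: expanding $\varphi^{123}$ gives $\mathrm{Re}\,\Omega=\theta^{123}-\theta^{156}+\theta^{246}-\theta^{345}$, with a plus sign on $\theta^{246}$, which, as you say, does not change which triples survive.
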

	\begin{proof}
		Straightforward by evaluation of equations \eqref{EqAlgebricheSLags}.
	\end{proof}
	\begin{osservazione}
		As already pointed out for equations \eqref{EqAlgebricheSLags}, Proposition \ref{DistribuzioniSingole} does not guarantee that the four distributions actually integrate to compact \textit{SLags}. In fact, we need to check integrability (i.e., the distributions need to be involutive) and, in case integability holds, compactness.
	\end{osservazione}
    We now want to discuss a sort of "symmetric" setting. In the proof of Lemma \ref{LemmaEqAlg} we compute our equations and then find the distributions of Proposition \ref{SpecialDistr} by explicitly writing $\omega$ and $\Omega$. These distributions integrate to angle zero SLags. However, if we consider angle $-\frac{\pi}{2}$ SLags i.e., the Calabi-Yau form $\Omega$ gets multiplied by $i$, then by the same reasoning we can find four distributions as the ones of Proposition \ref{SpecialDistr}, which are in a sort of way "complementary" to them. Precisely, the following holds.
    \begin{proposizione}\label{SpecialDistrRotated}
        Let $X$ be as in Section \ref{Preliminaries} but with Calabi-Yau form $\tilde{\Omega} = i\Omega$ (or equivalently considering SLags of angle $-\frac{\pi}{2}$). Then the only distributions of type $ \{E_i, E_j, E_k\} $ which satisfy the SLags equations \eqref{DefSLag} are given by
		\begin{align*}
			\{E_4, E_5, E_6\} \qquad \{E_2, E_3, E_4\} \qquad \{E_1, E_3, E_5\} \qquad \{E_1, E_2, E_6\} 
		\end{align*}
    \end{proposizione}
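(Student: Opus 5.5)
The plan is to repeat the evaluation scheme of Lemma \ref{LemmaEqAlg}, now with $\tilde\Omega = i\Omega$ in place of $\Omega$. The Lagrangian condition $\omega_{|_L}=0$ does not involve $\Omega$, so the first three equations of \eqref{EqAlgebricheSLags} are unchanged. The fourth condition becomes $\im\tilde\Omega_{|_L} = \mathrm{Re}\,\Omega_{|_L} = 0$. So the first step is to expand $\Omega = \varphi^{123} = (\theta^1+i\theta^4)\wedge(\theta^2+i\theta^5)\wedge(\theta^3+i\theta^6)$ and keep the real part. Collecting the terms with zero or two factors of $i$ should give
\[
\mathrm{Re}\,\Omega = \theta^{123} - \theta^{156} + \theta^{246} - \theta^{345}.
\]
The terms with one or three factors of $i$ should reproduce $\im\Omega = \theta^{126}-\theta^{456}+\theta^{234}-\theta^{135}$, which I will use as a consistency check against the proof of Lemma \ref{LemmaEqAlg}.

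Next I specialise to $\{E_i,E_j,E_k\}$, that is, to a matrix $A$ with a single $1$ in each row and distinct columns. Then $\theta^{ab}(E_i,E_j)$ and $\theta^{abc}(E_i,E_j,E_k)$ are, up to sign, just indicator functions of the index sets. So $\omega = \theta^{14}+\theta^{25}+\theta^{36}$ vanishes on the triple if and only if $\{i,j,k\}$ contains none of the pairs $\{1,4\},\{2,5\},\{3,6\}$. Since a triple of indices has only three elements, this means it must contain exactly one index from each pair. This leaves $2^3=8$ Lagrangian triples: $123, 126, 153, 156, 423, 426, 453, 456$.

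Finally, a basic $3$-form $\theta^{abc}$ evaluates to $\pm1$ on $(E_i,E_j,E_k)$ exactly when $\{a,b,c\}=\{i,j,k\}$, and to $0$ otherwise. Hence $\mathrm{Re}\,\Omega$ vanishes on the triple if and only if $\{i,j,k\}$ is not one of $\{1,2,3\},\{1,5,6\},\{2,4,6\},\{3,4,5\}$. Removing these four from the eight Lagrangian triples leaves exactly $\{4,5,6\},\{2,3,4\},\{1,3,5\},\{1,2,6\}$, which is the claim. This also explains the complementarity with Proposition \ref{DistribuzioniSingole}: the eight Lagrangian coordinate triples split according to which of $\mathrm{Re}\,\Omega$ or $\im\Omega$ they support.

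The only place needing care is the sign and index bookkeeping in the expansion of $\mathrm{Re}\,\Omega$, since reordering, for example $\theta^{426}=-\theta^{246}$, changes signs. However, the conclusion depends only on which index sets occur with nonzero coefficient, not on the signs, so the argument is robust. No integrability or compactness claim is involved, consistent with the remark following Proposition \ref{DistribuzioniSingole}.
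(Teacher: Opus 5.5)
Your proposal is correct and matches the paper's intended argument. The paper gives no separate proof, only the remark that ``the same reasoning'' applies: replace $\im\Omega$ by $\mathrm{Re}\,\Omega=\theta^{123}-\theta^{156}+\theta^{246}-\theta^{345}$ in the fourth condition of Lemma~\ref{LemmaEqAlg} and evaluate on coordinate triples, as in Proposition~\ref{DistribuzioniSingole}. Your bookkeeping (eight Lagrangian triples, minus the four that support $\mathrm{Re}\,\Omega$) is a clean way of carrying out that evaluation.
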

    It is straightforward to see that the theory of deformations and the examples we next develop can be considered either for the distributions of Proposition \ref{SpecialDistr} or \ref{SpecialDistrRotated} and as a matter of fact we consider the ones in the first proposition.
    However, the distributions of Proposition \ref{SpecialDistrRotated} are crucial for considering fibrations over compact manifolds, as we will need for the constructions on sections \ref{MirrorSymmCSNakamura} and \ref{MirrorSymmCPNakamura}.
	\section{Equations for deformations of special type SLags}\label{Deformations}
	In the previous section we focused on existence of special Lagrangian submanifolds. We now turn to first order deformations, whose background we described in Section \ref{Preliminaries}. Suppose that a distribution $ \{E_i, E_j, E_k\} $ from our Proposition \ref{DistribuzioniSingole} integrates to a SLag. We carry out explicit computations of equations \eqref{EquazioniDef}. In particular, the equations we find agree with the computations for the Iwasawa manifold carried out in \cite[Section 3]{CGPS}. We prove the following.
	\begin{teorema}\label{TeoremaEquazioniDeformazioni}
		Let $ X $ be a non-K\"ahler Calabi-Yau manifold as fixed in Section \ref{Preliminaries}. Let $ \{E_i, E_j, E_k\} $ be a distribution among the four of Proposition \ref{DistribuzioniSingole} which we suppose to be involutive and integrating to a submanifold (a SLag) that we denote by $ L_{ijk} $. Let $ T^*L_{ijk} = \langle \theta^{i}, \theta^j, \theta^k \rangle $. Then the space of infinitesimal deformations of $ L_{ijk} $ is described by the 1-forms $ \alpha = \alpha_i\theta^i + \alpha_j\theta^j + \alpha_k\theta^k \in T^*L_{ijk}$ whose coefficients satisfy the following system of differential equations on $ L_{ijk} $:
		\begin{align}\label{SistemaPDEDeformazioni}
				&E_i(\alpha_i) + \alpha_if_{jk} + E_j(\alpha_j) - \alpha_jf_{ik} + E_k(\alpha_k) + \alpha_kf_{ij} = 0\\
                &E_i(\alpha_j)-E_j(\alpha_i) + \alpha_i[(1-(-1)^{F(i,\tilde{i})})f^{i}_{ij} + (-1)^{F(i,\tilde{i})}f^{\tilde{i}}_{j\tilde{i}} - (-1)^{F(j,\tilde{j})}f^{\tilde{j}}_{i\tilde{i}}] + \\
                &+\alpha_j[(1-(-1)^{F(j,\tilde{j})})f^{j}_{ij}- (-1)^{F(j,\tilde{j})}f^{\tilde{j}}_{i\tilde{j}} + (-1)^{F(i,\tilde{i})}f^{\tilde{i}}_{j\tilde{j}}] + \nonumber\\
                &+\alpha_k[(1-(-1)^{F(k,\tilde{k})})f^{k}_{ij}+(-1)^{F(i,\tilde{i})}f^{\tilde{i}}_{j\tilde{k}} - (-1)^{F(j,\tilde{j})}f^{\tilde{j}}_{i\tilde{k}}] = 0\nonumber\\
                &E_i(\alpha_k)-E_k(\alpha_i) + \alpha_i[(1-(-1)^{F(i,\tilde{i})})f^{i}_{ik} + (-1)^{F(i,\tilde{i})}f^{\tilde{i}}_{k\tilde{i}} - (-1)^{F(k,\tilde{k})}f^{\tilde{k}}_{i\tilde{i}}] +\\
                &+\alpha_j[(1-(-1)^{F(j,\tilde{j})})f^{j}_{ik}- (-1)^{F(k,\tilde{k})}f^{\tilde{k}}_{i\tilde{j}} + (-1)^{F(i,\tilde{i})}f^{\tilde{i}}_{k\tilde{j}}] + \nonumber\\
                &+\alpha_k[(1-(-1)^{F(k,\tilde{k})})f^{k}_{ik}+(-1)^{F(i,\tilde{i})}f^{\tilde{i}}_{k\tilde{k}} - (-1)^{F(k,\tilde{k})}f^{\tilde{k}}_{i\tilde{k}}] = 0\nonumber\\
                &E_j(\alpha_k)-E_k(\alpha_j) +\alpha_i[(1-(-1)^{F(i,\tilde{i})})f^{i}_{jk} + (-1)^{F(j,\tilde{j})}f^{\tilde{j}}_{k\tilde{i}} - (-1)^{F(k,\tilde{k})}f^{\tilde{k}}_{j\tilde{i}}] +\\
            &+\alpha_j[(1-(-1)^{F(j,\tilde{j})})f^{j}_{jk}+ (-1)^{F(j,\tilde{j})}f^{\tilde{j}}_{k\tilde{j}} - (-1)^{F(k,\tilde{k})}f^{\tilde{k}}_{j\tilde{j}}] + \nonumber\\
            &+\alpha_k[(1-(-1)^{F(k,\tilde{k})})f^{k}_{jk}+(-1)^{F(j,\tilde{j})}f^{\tilde{j}}_{k\tilde{k}} - (-1)^{F(k,\tilde{k})}f^{\tilde{k}}_{j\tilde{k}}] = 0  \nonumber             
        \end{align}
		where $ d\theta^{rs} = f_{rs}\theta^{ijk}  $ for $r,s \in \{i,j,k\}$, $ f^{t}_{rs}$ the same as in the structure equations of $ \{\theta^1, ..., \theta^6\} $ but \underline{restricted to $L$}, an index of type $ \tilde{h} $ denotes the one among $ \{1, ..., 6\} $ such that $ JE_h = E_{\tilde{h}}$ and $ F(i,j) $ is defined as
		\[ F(i, j) = \begin{cases}
			0 & i \le j \\
			1 & i > j
		\end{cases}. \]
	\end{teorema}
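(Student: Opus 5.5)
We plan to start from Lemma \ref{LemmaDeformations}: an infinitesimal deformation of $L_{ijk}$ is a 1-form $\alpha$ on $L_{ijk}$ with $d\alpha + T\alpha = 0$ and $d(*\alpha)=0$. Since $|\Omega|_g$ is constant here, this is the same as the system \eqref{EquazioniDef}. On $L_{ijk}$ we use the restricted co-frame $\{\theta^i,\theta^j,\theta^k\}$ and the dual frame $\{E_i,E_j,E_k\}$. Both are orthonormal for the restricted metric, because $g$ is standard in $\{\theta^1,\dots,\theta^6\}$. We write $\alpha=\alpha_i\theta^i+\alpha_j\theta^j+\alpha_k\theta^k$ and compute both equations in this frame. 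The first equation of \eqref{SistemaPDEDeformazioni} will come from $d*\alpha=0$, and the remaining three will be the $\theta^{ij},\theta^{ik},\theta^{jk}$ components of $d\alpha+T\alpha=0$.

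\textbf{Step 1 (the coclosed equation).} With an orthonormal oriented frame we have $*\theta^i=\theta^{jk}$, $*\theta^j=-\theta^{ik}$ and $*\theta^k=\theta^{ij}$. Then
\[ d(*\alpha)=d\alpha_i\wedge\theta^{jk}+\alpha_i\,d\theta^{jk}-d\alpha_j\wedge\theta^{ik}-\alpha_j\,d\theta^{ik}+d\alpha_k\wedge\theta^{ij}+\alpha_k\,d\theta^{ij}. \]
Using $d\alpha_r\wedge\theta^{st}=E_{r'}(\alpha_r)\theta^{r'st}$ and $d\theta^{rs}=f_{rs}\theta^{ijk}$, the coefficient of $\theta^{ijk}$ is exactly the first line of \eqref{SistemaPDEDeformazioni}.

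\textbf{Step 2 (the $d\alpha$ part).} We have $d\alpha=\sum_r d\alpha_r\wedge\theta^r+\sum_r\alpha_r\,d\theta^r|_L$. By involutivity, $d\theta^r|_L$ involves only the coefficients $f^r_{st}$ with $s,t\in\{i,j,k\}$. So the $\theta^{ij}$ component is $E_i(\alpha_j)-E_j(\alpha_i)+\sum_r\alpha_r f^r_{ij}$, and similarly for the other two pairs. These give the terms of the form $f^{r}_{st}$ with coefficient $1$ in the theorem.

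\textbf{Step 3 (the torsion term, which is the main obstacle).} We must compute $T\alpha=-d\omega(J\alpha^*,\cdot,\cdot)$ restricted to $L$. Here $\alpha^*=\sum_r\alpha_rE_r$ and $J\alpha^*=\sum_r\pm\alpha_rE_{\tilde r}$. Writing $\omega=\theta^{14}+\theta^{25}+\theta^{36}=\sum_h\pm\theta^{h\tilde h}$, we expand
\[ d\omega=\sum_h\pm\bigl(d\theta^h\wedge\theta^{\tilde h}-\theta^h\wedge d\theta^{\tilde h}\bigr) \]
and contract with $E_{\tilde r}$, then restrict to $L$. Since $\theta^{\tilde s}|_L=0$ for $s\in\{i,j,k\}$, only two kinds of terms survive. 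The first kind is $f^{\tilde s}_{t\tilde r}$-type terms, coming from contracting $d\theta^{\tilde h}$ with $E_{\tilde r}$ while the $\theta^h$ factor lies on $L$. The second kind is the term in which $E_{\tilde r}$ hits the $\theta^{\tilde h}$ factor with $h=r$, giving $\pm d\theta^r|_L$, i.e.\ $f^r_{st}$ again.

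The main difficulty is the signs. These come from $J E_h=\pm E_{\tilde h}$, from which of $h,\tilde h$ is smaller in $\theta^{h\tilde h}$, and from the ordering inside the two-forms $\theta^{s\tilde h}$. We plan to encode all of them through $F(h,\tilde h)$. The second kind of term should then combine with Step 2 to give the coefficient $(1-(-1)^{F(r,\tilde r)})f^r_{st}$, which vanishes when $r<\tilde r$ and doubles when $r>\tilde r$. We will check this case by case on $h\in\{1,2,3\}$ versus $h\in\{4,5,6\}$, testing the result against the Iwasawa computation of \cite[Section 3]{CGPS}.

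Collecting the $\theta^{ij},\theta^{ik},\theta^{jk}$ coefficients of $d\alpha+T\alpha$ then yields the last three equations of \eqref{SistemaPDEDeformazioni}.
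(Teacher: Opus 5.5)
Your route is the same as the paper's. Steps 1 and 2 reproduce its computation of $d(*\alpha)$ and $d\alpha$, and your split of $\iota_{J\alpha^*}d\omega|_L$ into two kinds of surviving terms is correct. The gap is the sign claim in Step 3. You assert it rather than derive it, and it is false for the very signs you introduce.

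Put $\epsilon_h=(-1)^{F(h,\tilde h)}$, so that $\omega=\sum_h\epsilon_h\,\theta^h\wedge\theta^{\tilde h}$. For the actual complex structure (the forms $\theta^h+i\theta^{h+3}$ are of type $(1,0)$) one has $JE_h=E_{h+3}$ and $JE_{h+3}=-E_h$, that is, $JE_r=\epsilon_rE_{\tilde r}$. Hence
\[
\iota_{J\alpha^*}d\omega|_L=\sum_{r}\alpha_r\epsilon_r\Bigl(\epsilon_r\,d\theta^r|_L+\sum_{h}\epsilon_h\,\theta^h\wedge(\iota_{E_{\tilde r}}d\theta^{\tilde h})|_L\Bigr).
\]
Since $\epsilon_r^2=1$, $T\alpha$ contributes $-\sum_r\alpha_r\,d\theta^r|_L$. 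This cancels the term $\sum_r\alpha_r\,d\theta^r|_L$ of $d\alpha$ for \emph{every} $r$. So the coefficient of $f^r_{st}$ is $0$, not $1-\epsilon_r$, and nothing doubles when $r>\tilde r$. In addition, every first-kind term multiplying $\alpha_r$ carries an extra factor $\epsilon_r$ compared with the statement.

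The stated system comes out only if $\epsilon_r$ is dropped, i.e.\ $J\alpha^*=\alpha_iE_{\tilde i}+\alpha_jE_{\tilde j}+\alpha_kE_{\tilde k}$. That is exactly what the paper's proof does, by reading ``$JE_h=E_{\tilde h}$'' literally. This convention is incompatible with $J^2=-1$ as soon as an index of $\{i,j,k\}$ lies in $\{4,5,6\}$.

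Your proposed check cannot detect this, because for the Iwasawa leaf $L_{123}$ every $\epsilon_r=1$ and the two conventions agree. A test that does discriminate is $L_{156}$ in the completely solvable Nakamura manifold:
\begin{itemize}
\item The statement gives $E_1(\alpha_5)-E_5(\alpha_1)-3\lambda\alpha_5=0$, which is what the paper uses in its example.
\item A direct computation with $d\omega=-2\lambda\theta^{125}+2\lambda\theta^{136}$ and $JE_5=-E_2$ gives $E_1(\alpha_5)-E_5(\alpha_1)+\lambda\alpha_5=0$.
\end{itemize}
The second answer is also forced by naturality. The map $(w,z_1,z_2)\mapsto(w,iz_1,iz_2)$ descends to a biholomorphic isometry of $N$. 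It preserves $\omega$, sends $\Omega$ to $-\Omega$, and carries $\mathcal{L}_{123}$ onto $\mathcal{L}_{156}$. It transports the (correct) $L_{123}$ equation $E_1(\alpha_2)-E_2(\alpha_1)+\lambda\alpha_2=0$ to the second equation above.

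So the argument cannot be closed as written. You have two options:
\begin{itemize}
\item Adopt the paper's sign-free convention for $J\alpha^*$ explicitly. Your computation then reproduces the statement, but only as an identity under that convention.
\item Keep the genuine signs. You then prove a corrected system, with no $\alpha_rf^r_{st}$ terms and an extra factor $(-1)^{F(r,\tilde r)}$ on the remaining $\alpha_r$-terms. This agrees with the statement only for the distribution $\{E_1,E_2,E_3\}$.
\end{itemize}
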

	\begin{osservazione}
		\begin{enumerate}
			\item Precisely, in the previous statement the forms $ \theta^{\bullet} $ are to be considered restricted to $ L_{ijk} $. However, we keep denoting them by $\theta^{\bullet}  $.
			\item We have defined $ F(i,i) = 0 $ for completness however, we will not need this in the proof and/or in the statement as we will find objects of type $ F(i, \tilde{i}) $ and an index $ h $ and its corresponding $ \tilde{h} $ will always be different.
		\end{enumerate}
	\end{osservazione}
	\begin{proof}(Theorem \ref{TeoremaEquazioniDeformazioni}) Let $ \alpha \in T^*L_{ijk} $ written as in the statement. We use the convention $ i < j <k $, which is not restrictive since it just corresponds to a permutation of $ \{E_i, E_j, E_k\} $. We fix the metric $ g $ associated to $\omega$, which is written in this co-frame as
		\[ g = \theta^i \otimes \theta^i + \theta^j \otimes \theta^j + \theta^k \otimes \theta^k\]
		and $ \{\theta^i, \theta^j, \theta^k\} $ is $ g $-orthonormal basis.\\[5pt]
		Starting from the second equation of \eqref{EquazioniDefomazione}, which translates in $ d(*\alpha )= 0 $, we compute:
		\begin{align*}
			0 &= d(*\alpha) = d(\alpha_i\theta^{jk} - \alpha_j\theta^{ik} + \alpha_k\theta^{ij}) = \\
			&= [E_i(\alpha_i) + \alpha_if_{jk} + E_j(\alpha_j) - \alpha_jf_{ik} + E_k(\alpha_k) + \alpha_kf_{ij}]\theta^{ijk}
		\end{align*}
		which gives the first equation of \eqref{SistemaPDEDeformazioni}. For the next equation, we start by computing $ d\alpha $:
		\begin{align*}
			d\alpha &= d(\alpha_i\theta^i + \alpha_j\theta^j + \alpha_k\theta^k) = \\
			& = -E_j(\alpha_i)\theta^{ij} - E_k(\alpha_i)\theta^{ik} + \alpha_i(f^{i}_{ij}\theta^{ij} + f^{i}_{ik}\theta^{ik} + f^{i}_{jk}\theta^{jk}) + \\
			& + E_i(\alpha_j)\theta^{ij} - E_k(\alpha_j)\theta^{jk} + \alpha_j(f^{j}_{ij}\theta^{ij} + f^{j}_{ik}\theta^{ik} + f^{j}_{jk}\theta^{jk}) + \\
			& + E_i(\alpha_k)\theta^{ik} + E_j(\alpha_k)\theta^{jk} + \alpha_k(f^{k}_{ij}\theta^{ij} + f^{k}_{ik}\theta^{ik} + f^{k}_{jk}\theta^{jk}) = \\
			&= [E_i(\alpha_j)-E_j(\alpha_i) + \alpha_if^{i}_{ij} + \alpha_jf^{j}_{ij} +  \alpha_kf^{k}_{ij}]\theta^{ij} + \\
			&+ [E_i(\alpha_k)- E_k(\alpha_i) + \alpha_if^{i}_{ik} + \alpha_jf^{j}_{ik} + \alpha_k f^{k}_{ik}]\theta^{ik} + \\
			&+ [E_j(\alpha_k)- E_k(\alpha_j) + \alpha_if^{i}_{jk} + \alpha_jf^{j}_{jk} + \alpha_k f^{k}_{jk}]\theta^{jk}.
		\end{align*}
		Finally, we need to compute $ T\alpha = -d\omega(J\alpha^{*}, ., .) $, where $ \alpha^* $ is the $ g $-dual vector field to $\alpha$. We get:
		\[ \alpha^* = \alpha_iE_i + \alpha_jE_j + \alpha_kE_k \qquad J\alpha^{*} = \alpha_iE_{\tilde{i}} + \alpha_jE_{\tilde{j}} + \alpha_kE_{\tilde{k}}. \]
		To compute $ d\omega $ in a useful form for our next computations, we observe that we can write
		\[ \omega = (-1)^{F(i,\tilde{i})}\theta^{i\tilde{i}} +  (-1)^{F(j,\tilde{j})}\theta^{j\tilde{j}} +  (-1)^{F(k,\tilde{k})}\theta^{k\tilde{k}} \]
    	(we need the functions $F$ as we are always writing $\omega$ with the lowest index before and the highest after).
		In its most general form i.e., applying structure equations \eqref{EqStrutturaGeneriche}, $ d\omega $ becomes
		\begin{align*}
			d\omega &= (-1)^{F(i,\tilde{i})}\bigg[\sum_{1\le r< s \le 6}f^{i}_{rs}\theta^{rs\tilde{i}} - \sum_{1\le  r< s \le 6}f^{\tilde{i}}_{rs}\theta^{irs}\bigg] + \\
			&+ (-1)^{F(j,\tilde{j})}\bigg[\sum_{1\le r< s \le 6}f^{j}_{rs}\theta^{rs\tilde{j}} - \sum_{1\le r< s \le 6}f^{\tilde{j}}_{rs}\theta^{jrs}\bigg] + \\
			&+ (-1)^{F(k,\tilde{k})}\bigg[\sum_{1\le r< s \le 6}f^{k}_{rs}\theta^{rs\tilde{k}} - \sum_{1\le r< s \le 6}f^{\tilde{k}}_{rs}\theta^{krs}\bigg]. 
		\end{align*}
		We remark that several terms in the expression of $ d\omega $ above are actually zero because of repeating indices. At the same time, several other will cancel out after the next evaluation due to the fact that $ \theta^{\tilde{i}}_{|_{L_{ijk}}} = \theta^{\tilde{j}}_{|_{L_{ijk}}} = \theta^{\tilde{k}}_{|_{L_{ijk}}} = 0$. We then get:
		\begin{align*}
			&d\omega(J\alpha^*, ., .) = \alpha_id\omega(E_{\tilde{i}}, ., .) + \alpha_jd\omega(E_{\tilde{j}}, ., .) + \alpha_kd\omega(E_{\tilde{k}}, . ,.) = \\
			&= \alpha_i((-1)^{F(i,\tilde{i})}[f^{i}_{ij}\theta^{ij} + f^{i}_{ik}\theta^{ik} + f^{i}_{jk}\theta^{jk} - f^{\tilde{i}}_{j\tilde{i}}\theta^{ij} - f^{\tilde{i}}_{k\tilde{i}}\theta^{ik}] +\\
            &+(-1)^{F(j,\tilde{j})}[f^{\tilde{j}}_{i\tilde{i}}\theta^{ij} - f^{\tilde{j}}_{k\tilde{i}}\theta^{jk}] + (-1)^{F(k,\tilde{k})}[f^{\tilde{k}}_{i\tilde{i}}\theta^{ik} + f^{\tilde{k}}_{j\tilde{i}}\theta^{jk}]) + \\
			& +\alpha_j((-1)^{F(j,\tilde{j})}[f^{j}_{ij}\theta^{ij} + f^{j}_{ik}\theta^{ik} + f^{j}_{jk}\theta^{jk} + f^{\tilde{j}}_{i\tilde{j}}\theta^{ij} - f^{\tilde{j}}_{k\tilde{k}}\theta^{jk}] +\\
            &-(-1)^{F(i,\tilde{i})}[f^{\tilde{i}}_{j\tilde{j}}\theta^{ij} - f^{\tilde{i}}_{k\tilde{k}}\theta^{ik} ]+ (-1)^{F(k,\tilde{k})}[f^{\tilde{k}}_{i\tilde{j}}\theta^{ik} + f^{\tilde{k}}_{j\tilde{j}}\theta^{jk}]) +\\
			& +\alpha_k((-1)^{F(k,\tilde{k})}[f^{k}_{ij}\theta^{ij} + f^{k}_{ik}\theta^{ik} + f^{k}_{jk}\theta^{jk} + f^{\tilde{k}}_{i\tilde{k}}\theta^{ik} + f^{\tilde{k}}_{j\tilde{k}}\theta^{jk}] +\\
            &-(-1)^{F(i,\tilde{i})}[f^{\tilde{i}}_{j\tilde{k}}\theta^{ij} - f^{\tilde{i}}_{k\tilde{k}}\theta^{ik}] + (-1)^{F(j,\tilde{j})}[f^{\tilde{j}}_{i\tilde{k}}\theta^{ij} - f^{\tilde{j}}_{k\tilde{k}}\theta^{jk}]).
		\end{align*}
        We finally compute the equation $d\alpha + T\alpha = 0$. We group together the three possible contributions: $\theta^{ij}$, $\theta^{ik}$ and $\theta^{jk}$. Listing the coefficients:
        \begin{align*}
            \theta^{ij}:\quad &E_i(\alpha_j)-E_j(\alpha_i) +\\
            &+ \alpha_i[(1-(-1)^{F(i,\tilde{i})})f^{i}_{ij} + (-1)^{F(i,\tilde{i})}f^{\tilde{i}}_{j\tilde{i}} - (-1)^{F(j,\tilde{j})}f^{\tilde{j}}_{i\tilde{i}}] +\\
            &+\alpha_j[(1-(-1)^{F(j,\tilde{j})})f^{j}_{ij}- (-1)^{F(j,\tilde{j})}f^{\tilde{j}}_{i\tilde{j}} + (-1)^{F(i,\tilde{i})}f^{\tilde{i}}_{j\tilde{j}}] + \\
            &+\alpha_k[(1-(-1)^{F(k,\tilde{k})})f^{k}_{ij}+(-1)^{F(i,\tilde{i})}f^{\tilde{i}}_{j\tilde{k}} - (-1)^{F(j,\tilde{j})}f^{\tilde{j}}_{i\tilde{k}}]
        \end{align*}
        \begin{align*}
            \theta^{ik}:\quad &E_i(\alpha_k)-E_k(\alpha_i) +\\
            &+ \alpha_i[(1-(-1)^{F(i,\tilde{i})})f^{i}_{ik} + (-1)^{F(i,\tilde{i})}f^{\tilde{i}}_{k\tilde{i}} - (-1)^{F(k,\tilde{k})}f^{\tilde{k}}_{i\tilde{i}}] +\\
            &+\alpha_j[(1-(-1)^{F(j,\tilde{j})})f^{j}_{ik}- (-1)^{F(k,\tilde{k})}f^{\tilde{k}}_{i\tilde{j}} + (-1)^{F(i,\tilde{i})}f^{\tilde{i}}_{k\tilde{j}}] + \\
            &+\alpha_k[(1-(-1)^{F(k,\tilde{k})})f^{k}_{ik}+(-1)^{F(i,\tilde{i})}f^{\tilde{i}}_{k\tilde{k}} - (-1)^{F(k,\tilde{k})}f^{\tilde{k}}_{i\tilde{k}}]
        \end{align*}
        \begin{align*}
            \theta^{jk}:\quad &E_j(\alpha_k)-E_k(\alpha_j) +\\
            &+ \alpha_i[(1-(-1)^{F(i,\tilde{i})})f^{i}_{jk} + (-1)^{F(j,\tilde{j})}f^{\tilde{j}}_{k\tilde{i}} - (-1)^{F(k,\tilde{k})}f^{\tilde{k}}_{j\tilde{i}}] +\\
            &+\alpha_j[(1-(-1)^{F(j,\tilde{j})})f^{j}_{jk}+ (-1)^{F(j,\tilde{j})}f^{\tilde{j}}_{k\tilde{j}} - (-1)^{F(k,\tilde{k})}f^{\tilde{k}}_{j\tilde{j}}] + \\
            &+\alpha_k[(1-(-1)^{F(k,\tilde{k})})f^{k}_{jk}+(-1)^{F(j,\tilde{j})}f^{\tilde{j}}_{k\tilde{k}} - (-1)^{F(k,\tilde{k})}f^{\tilde{k}}_{i\tilde{k}}]
        \end{align*}
		Therefore, we finally obtain for the second deformation equation-
		which gives the remaining three equations of system \eqref{SistemaPDEDeformazioni}.
	\end{proof}

	\begin{osservazione}
		The solution of system \eqref{SistemaPDEDeformazioni} varies depending on the manifold we work on. However, the form of the system is general and completely accords with the explicit computations performed for example in \cite[Section 3]{CGPS} and the ones we present in the next section (which we had also computed separately before without Theorem \ref{TeoremaEquazioniDeformazioni}).
	\end{osservazione}
	
	\section{Explicit computations on nilmanifolds and solvmanifolds}\label{Examples}
	We here apply the results of the previous section, in particular Proposition \ref{DistribuzioniSingole}, in the setting of \textit{nilmanifolds} and \textit{solvmanifolds}. What is special about these examples in the possibility to prove that all the distributions of Proposition \ref{DistribuzioniSingole} integrate to \textit{SLags} and to actually solve the deformation equations \eqref{SistemaPDEDeformazioni}. In particular, we get a peculiar behaviour on the completely solvable Nakamura manifold, showing that the four foliations of \textit{SLags} determined by Proposition \ref{DistribuzioniSingole} are not in general diffeomorphic. More precisely, this shows that topologiccaly different special Lagrangian submanifolds can co-exist inside the same manifold.
	
	\subsection{The Iwasawa manifold $ \mathbb{I}_3 $}
	We apply our results to the \textit{Iwasawa manifold} $ \mathbb{I}_3 $. In particualr, we generalize the examples of \cite[Section 3]{CGPS}. Consider the Iwasawa manifold $ \mathbb{I}_3 =   \Gamma \backslash \mathbb{H}(3;\C)$ where $ \Gamma = \mathbb{Z}^3 \oplus \sqrt{-1}\mathbb{Z}^3 $. We fix on $ \mathbb{H}(3;\C) \simeq \C^3 $ coordinates $ (z_1, z_2, z_3) $ (complex) or $ (x_1, x_2, x_3, y_1, y_2, y_3) $ (real), with $ z_j = x_j + y_j$. A complex structure $ J $ on $ \mathbb{I}_3 $ is fixed through the global $ (1,0) $-co-frame $ \{\phi^1, \phi^2, \phi^3\} $, defined by $ \phi^{1} = \theta^{1} + i\theta^{4} $, $ \phi^{2} = \theta^{2} + i\theta^{5}  $, $ \phi^{3} = \theta^{3} + i\theta^{6}  $ using the real global co-frame $ \{\theta^1, \theta^2, \theta^3, \theta^4, \theta^5, \theta^6\} $, given by
		\begin{align*}
			\theta^1 = dx_1 \qquad  \theta^2 = dx_2 \qquad \theta^3 = dx_3 - x_1dx_2 + y_1dy_2\\
			\theta^4 = dy_1 \qquad \theta^5 = dy_2 \qquad \theta^6 = dy_3 - y_1dx_2 - x_1dy_2
		\end{align*}
		with structure equations
		\begin{align}\label{EqStrIwasawa}
			d\theta^1 = d\theta^2 = d\theta^4 = d\theta^5 = 0 \qquad d\theta^{3} = -\theta^{12} + \theta^{45} \qquad d\theta^{6} = \theta^{24} - \theta^{15}.
		\end{align}
		The corresponding real dual frame, denoted by $ \{E_1, E_2, E_3, E_4, E_5, E_6 \} $, is given by
		\begin{align*}
			E_1 = \dfrac{\del}{\del x_1} \qquad E_2 = \dfrac{\del}{\del x_2} + x_1\dfrac{\del}{\del x_3} + y_1\dfrac{\del}{\del y_3} \qquad E_3 = \dfrac{\del}{\del x_3} \\
			E_4 = \dfrac{\del}{\del y_1} \qquad E_5 =  \dfrac{\del}{\del y_2} + x_1\dfrac{\del}{\del y_3} - y_1\dfrac{\del}{\del x_3} \qquad E_6 = \dfrac{\del}{\del y_3}
		\end{align*}
		with brackets
		\begin{equation}\label{bracketsIwasawa}
			[E_1, E_2] = E_3 \qquad [E_1, E_5] = E_6 \qquad [E_2, E_4] = -E_6 \qquad [E_4, E_5] = -E_3.
		\end{equation}
		The standard hermitian metric on $ \mathbb{I}_{3} $ is fixed via the $ (1,1) $ non-degenerate (but not symplectic!) form
		 \[  \omega =\dfrac{i}{2} (\theta^{14} + \theta^{25} + \theta^{36}).   \]
		 Recalling the Iwasawa manifold is not K\"ahler, we get a non-K\"ahler Calabi-Yau structure from the holomorphic $ (3,0) $-form 
		 \[   \Omega = \phi^{123}\]
		 for which we have
		 \[\text{Re}\Omega = \theta^{123} - \theta^{156} + \theta^{246} - \theta^{345} \qquad \im\Omega =  \theta^{126} - \theta^{135} + \theta^{234} - \theta^{456} . \]
		We now look at \textit{Slags} following our Proposition \ref{DistribuzioniSingole}. In this case, looking at the brackets \eqref{bracketsIwasawa} all distributions given by the proposition are involutive and therfore integrable. We explicitely write the integral submanifolds corresponding to the first foru distributions (SLags of angle zero):
			\begin{align*}
			\mathcal{L}_{123} &= \{[x_1, x_2, x_3, A, B, C+Ax_2] \in \mathbb{I}_3: A, B, C \in \mathbb{R}\} \\
			\mathcal{L}_{156} &= \{[x_1, A, C - By_2, B, y_2, y_3] \in \mathbb{I}_3: A, B, C \in \mathbb{R}\} \\
			\mathcal{L}_{246} &= \{[x_1, A, B + Ax_1, y_1, C, y_3] \in \mathbb{I}_3: A, B, C \in \mathbb{R}\} \\
			\mathcal{L}_{345} &= \{[x_1, A, x_3, y_1, B, C + Bx_1] \in \mathbb{I}_3: A, B, C \in \mathbb{R}\}.
		\end{align*}
	\begin{osservazione}
		\begin{enumerate}
			\item As the computations in \cite[Section 3]{CGPS} show, the SLag in $ \mathcal{L}_{123} $ corresponding to $ A=B=C=0 $ is compact.
			\item In this case, we have $ b_{1}(L_{ijk}) = 2 $ for all $ i,j,k $ and for all submanifolds in the families. Therefore, these \textit{SLags} share similar topological information.
		\end{enumerate}
	\end{osservazione}
    We now generalize the computations of \cite[Section 3]{CGPS} by applying Theorem \ref{TeoremaEquazioniDeformazioni} to a SLag $ L_{123} \in \mathcal{L}_{123}$. The distribution corresponding to $ L_{123} $ is $ \{E_1, E_2, E_3\} $ (as always, they are restricted to $ L_{123} $ but keep the same notation) and the dual frame is just $ \{\theta^1, \theta^2, \theta^3\} $. In particular, from the structure equations \eqref{EqStrIwasawa} we get
	\[ d\theta^{12} = d\theta^{13} = d\theta^{23} = 0  \]
	and therefore the first of the deformation equations is just
	\[ E_1(\alpha_1) + E_2(\alpha_2) + E_3(\alpha_3) = 0.\]
	For the remaining three equations, again from \eqref{EqStrIwasawa} we get that all $ f_{rs}^{t} $ vanish except for
	\[ f_{12}^{3} = f_{15}^{6} = -1 \qquad f_{45}^{3} = f_{24}^{6} = 1. \]
	Since here $ \{i,j,k\} = \{1,2,3\} $ and $ \{\tilde{i}, \tilde{j}, \tilde{k}\} = \{4,5,6\} $, grouping together the four equations of Theorem \ref{TeoremaEquazioniDeformazioni} we get the final system
	\begin{equation*}
		\begin{cases}
			E_1(\alpha_1) + E_2(\alpha_2) + E_3(\alpha_3) = 0 \\
			E_1(\alpha_2) - E_2(\alpha_1) = 0 \\
			E_1(\alpha_3) - E_3(\alpha_1) +\alpha_2 = 0 \\
			E_2(\alpha_3) - E_3(\alpha_2) -\alpha_1 = 0 \\
		\end{cases}
	\end{equation*}
	which perfectly corresponds to \cite[Equation 3.10]{CGPS}, except for the fact that here we have $ u = 0 $. The system can be completely solved via elliptic theory just using the bracket $ [E_1, E_2] = E_3 $ and the resulting space of deformations is 1-dimensional, less that one with respect to $ b_1(L_{123}) = 2 $. \\
    We point out that the Theorem can be applied to all the SLags we found however, we leave things as they are for $\mathbb{I}_3$ and focus on a less studied example, which is the \textit{Nakamura manifold}.
	\subsection{The completely solvable 6-dimensional Nakamura manifold}
	We consider the \textit{completely solvable Nakamura manifold} $ N = \Gamma \ltimes_{\rho}( \C \ltimes_{\rho} \C^{2}) $, as in \cite{CT} and \cite{LT}. Let us briefly recall the construction. Let $M \in \SL(2,\mathbb{Z})$ be a matrix with positive eigenvalues $\{e^{\lambda_{1}}, e^{\lambda_{2}}\}$ such that
\begin{equation}
	PMP^{-1} = \text{diag}(e^{\lambda_{1}}, e^{\lambda_{2}}) =: D
\end{equation}
with $P \in \GL(n,\mathbb{R})$. Since $M \in \SL(n, \mathbb{Z})$, we have $\sum_{i = 1}^2 \lambda_i = 0$ .
Let $\rho : \mathbb{C} \rightarrow GL(2, \mathbb{C})$ be the group action of $\mathbb{C}$ over $\mathbb{C}^{2}$ given by:
\[ w \longmapsto \text{diag}(e^{\frac{1}{2}\lambda_1(w + \overline{w})}, e^{\frac{1}{2}\lambda_2(w + \overline{w})}).\]
We can then consider the (real) Lie group $ G := \mathbb{C} \ltimes_{\rho} \mathbb{C}^{2}$ where the product, denoted by $*$, is explicitly given by
\[ (w', z'_{1}, z'_{2}) * (w, z_{1}, z_{2}) = (w' + w, \rho(w')(z_{1}, z_{2}) + (z'_{1},z'_{2})) \]
As was proved in \cite[Subsection 4.5]{CT}, $G$ is a 2-step solvable not nilpotent Lie group of \textit{completely solvable type}. We obtain compact quotients of $(G, *)$ by fixing $\tau \in \mathbb{R} \setminus \{0\}$ and considering the lattice in $G$ (for the proof see \cite[Section 3]{CT}) given by $\Gamma_{P, \tau} :=  \Gamma'_{\tau} \ltimes_{\rho} \Gamma''_{P}$, where
\begin{equation}\label{LatticesInNakamura}
    \Gamma'_{\tau} := \mathbb{Z} \oplus \tau\sqrt{-1}\cdot\mathbb{Z} \qquad
    \Gamma''_{P} := P\mathbb{Z}^{2} \oplus \sqrt{-1}P\mathbb{Z}^{2}
\end{equation}
Then $N:=\Gamma_{P, \tau} \backslash (\mathbb{C} \ltimes_{\rho} \mathbb{C}^{2})$ is a solvmanifold, called the \textit{(generalized) Nakamura manifold} associated to the triple $(M, P, \tau)$.
A complex structure on $ N $ is fixed by the global $ (1,0) $-co-frame
	\[ \varphi^0 = dw \qquad \varphi^1 = e^{-\frac{1}{2}\lambda_1(w+\bar w)}dz_1 \qquad \varphi^2 = e^{-\frac{1}{2}\lambda_2(w+\bar w)}dz_2.\]
	Re-writing in real terms following the notation of Section \ref{Preliminaries} i.e., $ \varphi^0 = \theta^1 + i\theta^4 $, $ \varphi^1 = \theta^2 + i\theta^5 $, $ \varphi^2 = \theta^3 + i\theta^6 $ gives the global co-frame
	\[ \theta^1 = dx \qquad \theta^4 = dy\]
	\[ \theta^2 = e^{-\frac{1}{2}\lambda_1(w+\bar w)}dx_1  \qquad \theta^5  = e^{-\frac{1}{2}\lambda_1(w+\bar w)}dy_1 \]
	\[ \theta^3 = e^{-\frac{1}{2}\lambda_2(w+\bar w)}dx_2 \qquad \theta^6 = e^{-\frac{1}{2}\lambda_2(w+\bar w)}dy_2\]
	with real structure equations:
	\[ d\theta^{1} = d\theta^4 = 0 \]
	\[ d\theta^2 = -\lambda_1 \theta^{12} \qquad d\theta^5 = -\lambda_1 \theta^{15}\]
	\[ d\theta^3 = -\lambda_2 \theta^{13} \qquad d\theta^6 = -\lambda_2 \theta^{16}. \\ \]
	A global frame $ \{E_1, E_2, E_3, f_0, f_1, f_2\} $ dual to $ \{\theta^{1}, ..., \theta^6\} $ is given by
	\begin{align*}
		E_1 = \dfrac{\del}{\del x} \qquad E_2 = e^{\frac{1}{2}\lambda_1(w+\bar w)}\dfrac{\del}{\del x_1} \qquad E_3= e^{\frac{1}{2}\lambda_2(w+\bar w)}\dfrac{\del}{\del x_2} \\
		E_4 = \dfrac{\del}{\del y} \qquad E_5 = e^{\frac{1}{2}\lambda_1(w+\bar w)}\dfrac{\del}{\del y_1} \qquad E_6 = e^{\frac{1}{2}\lambda_2(w+\bar w)}\dfrac{\del}{\del y_2} \\
	\end{align*}
	with non-zero brackets
	\[ [E_1, E_2] = \lambda_1E_2 \qquad [E_1, E_3] = \lambda_2E_3 \qquad [E_1, E_5] = \lambda_1E_5 \qquad [E_1, E_6] = \lambda_2E_6.\]
	In \cite[Proposition 4.4]{CT}, it is proved that $ N $ cannot be K\"ahler. We give $ N $ a non-K\"ahler Calabi-Yau structure by the holomorphic non-zero $ (3,0) $-form
	\[ \Omega = \varphi^{012}\]
	and the hermitian metric defined by the $ (1,1) $-form 
	\[ \omega = \theta^{14} + \theta^{25} + \theta^{36}. \]
	We can then apply our Proposition \ref{DistribuzioniSingole} and the four distributions are all involutive therefore integrable and as for $ \mathbb{I}_3 $ they integrate to four foliations of $ N $ whose leaves are all \textit{SLags}. We get the following foliations of $N$:
    \begin{align*}
			\mathcal{L}_{123} &= \{[x, x_1, x_2, A, B, C] \in N: A, B, C \in \mathbb{R}\} \\
			\mathcal{L}_{156} &= \{[x, A, B, C, y_1, y_2] \in N: A, B, C \in \mathbb{R}\} \\
			\mathcal{L}_{246} &= \{[A, x_1, B, y, C, y_2] \in N: A, B, C \in \mathbb{R}\} \\
			\mathcal{L}_{345} &= \{[A, B, x_2, y, y_1, C ] \in N: A, B, C \in \mathbb{R}\}.
	\end{align*}
    If instead we consider the SLags of angle $\theta = -\frac{\pi}{2}$, we get instead the foliations
    \begin{align*}
			\mathcal{L}_{456} &= \{[A, B, C, y, y_1, y_2] \in N: A, B, C \in \mathbb{R}\} \\
			\mathcal{L}_{234} &= \{[A, x_1, x_2, y, B, C] \in N: A, B, C \in \mathbb{R}\} \\
			\mathcal{L}_{135} &= \{[x, A, x_2, B, y_1, C] \in N: A, B, C \in \mathbb{R}\} \\
			\mathcal{L}_{126} &= \{[x, x_1, A, B, C, y_2] \in N: A, B, C \in \mathbb{R}\}.
	\end{align*}	
    \subsubsection{Topological properties of the SLags}
    We give a detailed study of the topology of the previous SLags foliations. The main goal is to show how here different foliations share substantial topological differences. In the following results of the section, we will refer to the \textit{completely solvable Nakamura manifold} and as the \textit{Nakamura manifold}.\\[5pt]
    To begin with, the structure equations (in particular, the presence of $\theta^1$ in all the non-vanishing differentials) directly show the following.
	\begin{teorema}\label{NumBettiSLagsCSNakamura}
		Let $ N $ be the 6-dimensional Nakamura manifold. Then all the previous distributions \ref{DistribuzioniSingole} integrate to SLags which behave topologically in the following way:
		\begin{itemize}
			\item the submanifolds in $ \mathcal{L}_{123} $, $ \mathcal{L}_{156} $, $\mathcal{L}_{135}$ and $\mathcal{L}_{126}$ have $ b_{1}(L_{ijk}) = 1 $;
			\item the submanifolds in $ \mathcal{L}_{246} $, $ \mathcal{L}_{345} $, $\mathcal{L}_{234}$ and $\mathcal{L}_{456}$ have $ b_{1}(L_{ijk}) = 3$.
		\end{itemize}
	\end{teorema}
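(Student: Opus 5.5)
The plan is to compute $b_1$ of each leaf through the Lie algebra spanned by the three vector fields that generate the distribution, and then to check that this Lie algebra computation really gives the topology of the leaves. In every case the relevant vector fields close under brackets, so each distribution $\{E_i,E_j,E_k\}$ spans a $3$-dimensional Lie subalgebra $\mathfrak{h}_{ijk}\subset\mathfrak{g}$. The leaves are the projections to $N$ of the cosets of the corresponding connected subgroup $H_{ijk}\subset G$. Restricting the structure equations of $\{\theta^1,\dots,\theta^6\}$ to $T^*L_{ijk}=\langle\theta^i,\theta^j,\theta^k\rangle$ gives the Chevalley--Eilenberg differential of $\mathfrak{h}_{ijk}$. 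The key observation is that every non-vanishing differential contains the factor $\theta^1$. So the restricted differentials split into two cases according to whether $1\in\{i,j,k\}$.

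\emph{Case $1\notin\{i,j,k\}$.} This covers $L_{246},L_{345},L_{234},L_{456}$. Since $\theta^1$ restricts to zero, all three restricted differentials vanish and $\mathfrak{h}_{ijk}$ is abelian. Concretely, these leaves are the sets where $x$ is fixed, together with one coordinate in each of the pairs $(x_1,y_1)$ and $(x_2,y_2)$. On such a set the $\rho$-action is a fixed diagonal rescaling. The lattice $\Gamma_{P,\tau}$ therefore acts on the free coordinates by translations:
\begin{itemize}
\item by $\tau\mathbb{Z}$ in $y$;
\item by the lattice generated by the appropriate rows of $P\mathbb{Z}^2$ (rescaled) in the two fibre coordinates.
\end{itemize}
I will check that this subgroup is a rank $3$ lattice in the $3$-dimensional translation group. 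Then each leaf is a flat $3$-torus $T^3$, and $b_1=3$.

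\emph{Case $1\in\{i,j,k\}$.} This covers $L_{123},L_{156},L_{135},L_{126}$. Here $\mathfrak{h}_{ijk}\cong\mathbb{R}\ltimes_{\mathrm{diag}(\lambda_1,\lambda_2)}\mathbb{R}^2$, with restricted differentials $d\theta^1=0$, $d\theta^{a}=-\lambda_1\theta^{1a}$ and $d\theta^{b}=-\lambda_2\theta^{1b}$. Since $\lambda_1=-\lambda_2\neq0$ (the eigenvalues of $M$ are positive and different from $1$), the only closed invariant $1$-form is a multiple of $\theta^1$, and there are no exact $1$-forms. So $H^1(\mathfrak{h}_{ijk})\cong\mathbb{R}$. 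To pass to de Rham cohomology, I will identify a compact leaf as a solvmanifold $\Gamma_{ijk}\backslash H_{ijk}$, where $\Gamma_{ijk}=\Gamma_{P,\tau}\cap H_{ijk}$. This group is generated by translation by $1$ in $x$ (acting through $M$ after conjugating by $P$) and by a rank $2$ lattice in the fibre $\mathbb{R}^2$. So the leaf is the mapping torus $T^2_M$ of the hyperbolic matrix $M$. Hattori's theorem for completely solvable groups then gives $H^1_{dR}\cong H^1(\mathfrak{h}_{ijk})$, so $b_1=1$. As an independent check, Wang's sequence gives $b_1(T^2_M)=1+\dim\ker(M-I)=1$, because $1$ is not an eigenvalue of $M$.

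The main obstacle is to make precise which leaves are compact and what "submanifolds in $\mathcal{L}_{ijk}$" means for the others. This matters in the second case. For a leaf of $\mathcal{L}_{123}$ with fixed $(y,y_1,y_2)=(A,B,C)$, going once around in $x$ sends $(B,C)$ to $(e^{\lambda_1}B,e^{\lambda_2}C)$ modulo the lattice. So only special values of $(B,C)$, for instance those projecting to lattice points such as $B=C=0$, give closed leaves. For the remaining leaves I expect to argue with the holonomy cover, whose first cohomology still has dimension $1$, or else to restrict the statement to the compact leaves, which are exactly the ones used later for the fibrations. I would first attempt the orbit computation under $M$ to isolate the compact leaves.
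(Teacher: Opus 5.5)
Your Lie-algebra computation is the paper's entire argument. The paper only observes that $\theta^1$ divides every non-zero restricted differential, so $H^1(\mathfrak h_{ijk})$ is $\R\langle\theta^1\rangle$ when $1\in\{i,j,k\}$ and all of $\mathfrak h_{ijk}^*$ otherwise, and it tacitly reads this as $b_1$ of the leaves. Your attempt to justify that reading fails, and it already fails in the abelian case.

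A leaf of $\mathcal{L}_{246}$ lifts to $\Pi=\{x=A,\ x_2=B,\ y_1=C\}\subset\R^6$. A lattice element with parameters $(a,b,a_1,a_2,b_1,b_2)$ maps $\Pi$ to itself if and only if
\[
a=0,\qquad p_{21}a_1+p_{22}a_2=0,\qquad p_{11}b_1+p_{12}b_2=0 .
\]
Since $PM=DP$, the rows of $P$ are eigenvectors of $M^T$ with irrational eigenvalues $e^{\pm\lambda}$. So no non-zero integer vector is orthogonal to either row; this is the content of Lemma \ref{LemmaNumTh}. Hence $a_1=a_2=b_1=b_2=0$, and the stabiliser is only $\tau\Z$ acting on $y$.

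The translations by $p_{11}a_1+p_{12}a_2$ that you want to use on $x_1$ also move $x_2$ off the leaf. Moreover, the set of such numbers is a dense rank-$2$ subgroup of $\R$, not a lattice. So every leaf of $\mathcal{L}_{246}$, and likewise of $\mathcal{L}_{345}$, is a non-closed copy of $S^1\times\R^2$, exactly as Theorem \ref{TopologyFoliations}(2) asserts. Its first Betti number is $1$, not $3$.

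Your rank-$3$ check works only for $\mathcal{L}_{234}$ and $\mathcal{L}_{456}$. There the fixed fibre coordinates are $(y_1,y_2)$, resp. $(x_1,x_2)$, and invertibility of $P$ alone forces $b_1=b_2=0$, resp. $a_1=a_2=0$. Every such leaf is indeed a flat $T^3$ (Proposition \ref{ToriLeaves}).

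The same irrationality breaks the non-abelian case, since a rank-$2$ fibre lattice exists only for $\mathcal{L}_{123}$ and $\mathcal{L}_{156}$.
\begin{itemize}
\item For $\mathcal{L}_{135}$ and $\mathcal{L}_{126}$, putting $a=0$ again kills all $a_i,b_i$, so the stabiliser injects into $\Z$ via $a$. A generic leaf is a copy of $\R^3$ with $b_1=0$; special leaves such as $A=C=0$ are copies of $S^1\times\R^2$. None is compact, so your mapping-torus identification never applies.
\item For $\mathcal{L}_{123}$, a leaf is closed exactly when $P^{-1}(B,C)^T\in\mathbb{Q}^2$. These are the periodic points of $M$ on $T^2$, not only $B=C=0$. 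Such a leaf is the mapping torus of some $M^k$, and your Hattori/Wang argument correctly gives $b_1=1$.
\item Every other leaf of $\mathcal{L}_{123}$ has stabiliser $\Z^2$ acting by $P\Z^2$-translations and trivial holonomy. So it is $\R\times T^2$ with $b_1=2$, and the holonomy-cover fallback cannot produce $1$.
\end{itemize}

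Read as a statement about the Betti number of every leaf, the theorem is false for several families, so no completion of your plan can prove it. What is provable is that $\dim H^1(\mathfrak h_{ijk})$ equals $1$ or $3$ as stated, which is all the paper's argument gives. It coincides with $b_1$ for the compact leaves: the closed leaves of $\mathcal{L}_{123}$ and $\mathcal{L}_{156}$, and all leaves of $\mathcal{L}_{234}$ and $\mathcal{L}_{456}$. Your fallback of restricting to compact leaves is the right repair, but it discards $\mathcal{L}_{246}$, $\mathcal{L}_{345}$, $\mathcal{L}_{135}$ and $\mathcal{L}_{126}$ entirely.
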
 
	Therefore, we obtain topologically distinct families of special Lagrangians in completely solvable  Nakamura manifold.
	\begin{corollario}\label{EsistenzaSLagTopDiverse}
		There can exists topologically distinct families of special Lagrangians in completely solvable  Nakamura 6-manifold. 
	\end{corollario}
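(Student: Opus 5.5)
The corollary follows directly from Theorem \ref{NumBettiSLagsCSNakamura}; the only thing to check is that the leaves being compared really are compact, oriented 3-dimensional special Lagrangians in the same ambient manifold $N$, so that $b_1$ is a well-defined topological invariant that can tell them apart. I will pick one family from each of the two lists, namely $\mathcal{L}_{123}$ (angle zero, $b_1=1$) and $\mathcal{L}_{246}$ (angle zero, $b_1=3$). Both are SLags of the same phase for the fixed structure $(N,J,\Omega,\omega)$, by Proposition \ref{DistribuzioniSingole} and the involutivity of the distributions recorded above.

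The key steps, in order, are as follows.
\begin{enumerate}
\item Identify compact leaves. For $\mathcal{L}_{246}$, fix $x=A$, $x_2=B$, $y_1=C$. The leaf is then parametrised by $(x_1,y,y_2)$. The action of $\Gamma_{P,\tau}$ rescales $x_1$ and $y_2$ by $e^{\lambda_i x}$, which is trivial in the $y$-direction and constant once $x$ is fixed. So for suitable (e.g.\ lattice-compatible) constants the leaf is the quotient of $\R^3$ by a lattice of translations, namely a $3$-torus, and $b_1=3$.
\item For $\mathcal{L}_{123}$, the leaf is parametrised by $(x,x_1,x_2)$. Here the $\Z$-generator of $\Gamma'_\tau$ acts by $x\mapsto x+1$ combined with the hyperbolic monodromy $M$ (conjugated by $P$) on $(x_1,x_2)$. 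The leaf is therefore the mapping torus of $M$ acting on $T^2$, a $3$-dimensional solvmanifold.
\item Compute $b_1$ of this mapping torus via the Wang sequence: $b_1 = 1+\dim\ker(M-I)$ on $H^1(T^2)$. Since $M$ has eigenvalues $e^{\lambda_i}\neq 1$, this gives $b_1=1$. Equivalently, by Hattori's theorem for completely solvable groups, one can use the invariant forms: only $\theta^1$ is closed among $\theta^1,\theta^2,\theta^3$.
\item Conclude that since $1\neq 3$, the two compact SLags are not homotopy equivalent, let alone diffeomorphic. Hence $N$ contains topologically distinct families of special Lagrangians.
\end{enumerate}

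The main obstacle is step 1, the compactness and closedness of the leaves. A generic leaf of a foliation need not be closed, and the constants $A,B,C$ must be compatible with $\Gamma_{P,\tau}$; for instance, $y=B$ in $\mathcal{L}_{123}$ should be read modulo $\tau\Z$. I would first check the special leaves through the identity, $A=B=C=0$, where the subgroup $\{y=y_1=y_2=0\}$ (respectively $\{x=x_2=y_1=0\}$) is a closed Lie subgroup meeting $\Gamma_{P,\tau}$ in a lattice. This is what makes its image in $N$ compact. The Betti numbers in step 3 are then exactly the content of Theorem \ref{NumBettiSLagsCSNakamura}, which we may assume.
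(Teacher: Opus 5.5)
Your step~1 is false, and the comparison rests on it. An element $\bigl(a+ib\tau,\ P(a_1,a_2)^T+iP(b_1,b_2)^T\bigr)\in\Gamma_{P,\tau}$ maps the leaf $\{x=A,\ x_2=B,\ y_1=C\}$ of $\mathcal{L}_{246}$ to itself only under three conditions:
\begin{itemize}
\item $a=0$, so there is no rescaling at all (the factors are $e^{\lambda_i a}$, not $e^{\lambda_i x}$);
\item $p_{21}a_1+p_{22}a_2=0$;
\item $p_{11}b_1+p_{12}b_2=0$.
\end{itemize}
The rows of $P$ are eigenvectors of $M^T$ for the irrational eigenvalues $e^{\pm\lambda}$. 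So neither row is proportional to an integer vector, and the last two equations force $a_1=a_2=b_1=b_2=0$.

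Hence the stabiliser of every leaf is just $\tau\Z$ acting on $y$, whatever $A,B,C$ are; no choice of constants is ``lattice-compatible''. This is exactly the content of Lemmas~\ref{Lemma246}, \ref{LemmaNumTh} and Proposition~\ref{Topology246and345}: no leaf of $\mathcal{L}_{246}$ (or $\mathcal{L}_{345}$) is closed. Each leaf is a copy of $S^1\times\R^2$, dense in the $5$-torus $\{x=A\}$, so its first Betti number is $1$.

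The number $3$ in Theorem~\ref{NumBettiSLagsCSNakamura} is the invariant count for the abelian algebra $\langle E_2,E_4,E_6\rangle$. It computes the topology of a leaf only when the leaf is a compact quotient by a lattice, which never happens here. The test you proposed at the identity would have exposed this: $\{x=x_2=y_1=0\}\cap\Gamma_{P,\tau}$ has rank one, so it is not a lattice in that subgroup.

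In fact no phase-zero pair can work. The closed leaves of $\mathcal{L}_{123},\mathcal{L}_{156},\mathcal{L}_{246},\mathcal{L}_{345}$ all lie in the first two foliations. They are $T^2$-bundles over $S^1$ with hyperbolic monodromy, hence have $b_1=1$.

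For a compact leaf with $b_1=3$ you must use the phase $-\pi/2$ foliations $\mathcal{L}_{234}$ or $\mathcal{L}_{456}$. For the leaf $\{x=A,\ y_1=B,\ y_2=C\}$ the stabiliser is $\{a=0,\ b_1=b_2=0\}$, i.e.\ translations by $P\Z^2\oplus\tau\Z$ in $(x_1,x_2,y)$. So every leaf is a $3$-torus (Proposition~\ref{ToriLeaves}).

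Compare it with the compact leaf of $\mathcal{L}_{123}$ through $y_1=y_2=0$, whose $b_1=1$ your steps 2--3 compute correctly; this proves the corollary. The price is that the two families are special Lagrangian for different phases (equivalently, for $\Omega$ and $i\Omega$), which the definition allows. If you insist on $\mathcal{L}_{123}$ versus $\mathcal{L}_{246}$, the leaves are still topologically distinct. But that is because one is compact and the other is not, not because of $b_1$.
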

    Theorem \ref{NumBettiSLagsCSNakamura} groups the foliations in two different families. We can study topological properties of the foliations, namely \textit{closure} and \textit{compactness} of their leaves, more into detail.\\[10pt]
    Recall that our previous results of section \ref{Equations} do not ensure the compactness of any leaf in general. In fact. a leaf $L$ is compact if and only if there exists a rank 3 sub-lattice preserving its universal cover. This yields more stringent requirements on the constants $A, B, C$ and yields a variety of topologically different SLag foliations. We summarize all this in the following result.
   \begin{teorema}\label{TopologyFoliations}
   Let $N$ be the 6-dimensional Nakamura manifold as before. Then the following hold.
   \begin{enumerate}
       \item A leaf of the foliation $\mathcal{L}_{123}$ is a closed (and therefore compact) submanifold of $N$ if and only if $B = C = 0$. Similarly, a leaf of $\mathcal{L}_{156}$ is closed if and only if $A = B = 0$.
       \item No leaf of the foliations $\mathcal{L}_{246}$, $\mathcal{L}_{345}$, $\mathcal{L}_{135}$ and $\mathcal{L}_{126}$ is a closed submanifold of $N$.
       \item All the leaves of the foliations $\mathcal{L}_{234}$, $\mathcal{L}_{456}$ are closed submanifolds of $N$.
    \end{enumerate}
   \end{teorema}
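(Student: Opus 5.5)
The plan is to use the criterion recalled just before the statement: a leaf $L$ of one of the foliations is closed in $N$ (hence compact, since $N$ is compact) if and only if the subgroup $\Gamma_L := \{\gamma \in \Gamma_{P,\tau} : \gamma * \tilde L = \tilde L\}$ has rank $3$ and acts cocompactly on the lift $\tilde L \subset \mathbb{C} \ltimes_\rho \mathbb{C}^2$. Here $\tilde L$ is the affine $3$-plane given by the explicit description of each foliation. For every foliation I will write a general lattice element as $\gamma = (m + i\tau n,\, Pa + iPb)$ with $m,n \in \mathbb{Z}$ and $a,b \in \mathbb{Z}^2$. Its action is
\[ (x,y,z_1,z_2) \mapsto (x+m,\; y+\tau n,\; e^{\lambda_1 m}z_1 + (Pa+iPb)_1,\; e^{\lambda_2 m}z_2 + (Pa+iPb)_2), \]
since $\rho$ only depends on $\mathrm{Re}\, w$. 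I then read off, coordinate by coordinate, which $\gamma$ preserve the constraints defining $\tilde L$.

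For item (1), take $\tilde L_{123} = \{y = A,\ (y_1,y_2) = (B,C)\}$. Preserving $y = A$ forces $n = 0$. Preserving the imaginary $\mathbb{C}^2$-part requires $D^m(B,C)^T + Pb = (B,C)^T$. The free directions $x, x_1, x_2$ are moved by $m$ and by $Pa$, so cocompactness requires elements with $m \neq 0$ together with the full rank-$2$ translations $Pa$. The key step is to show that a consistent rank-$3$ stabilizer forces the elements with $m \neq 0$ to act without an imaginary translation part, i.e. with $b = 0$. I would try to prove this by composing such an element with the pure translations in $\Gamma_L$ and by normalising the representative of the leaf. Granting this, the condition becomes $D^m(B,C)^T = (B,C)^T$ for some $m \neq 0$. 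Since $e^{\lambda_i m} \neq 1$, this gives $B = C = 0$. Conversely, for $B = C = 0$ the group generated by $(1, 0)$ and $(0, Pa)$ with $a \in \mathbb{Z}^2$ stabilizes $\tilde L$ and acts cocompactly, because $\tilde L/\Gamma_L$ is the mapping torus of $M$ on $T^2$. The same argument, with the roles of $x_i$ and $y_i$ exchanged, gives $A = B = 0$ for $\mathcal{L}_{156}$.

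For item (2), each of $\mathcal{L}_{246}, \mathcal{L}_{345}, \mathcal{L}_{135}, \mathcal{L}_{126}$ fixes one real coordinate in each of the pairs $(x_1,y_1)$, $(x_2,y_2)$ and lets the other vary, and exactly one of $x, y$ varies. In the cases where $x$ is fixed, $m = 0$, so the $\mathbb{C}^2$-part is only translated by $Pa + iPb$. The fixed coordinates then impose conditions on only one component of $Pa$ or $Pb$ in each factor. I will use that $P$ diagonalises a hyperbolic integer matrix, so no nonzero vector of $P\mathbb{Z}^2$ has a vanishing coordinate. Hence the stabilizer cannot contain the needed rank-$2$ translations along the varying directions $x_1$ or $y_2$, etc., and its rank is strictly less than $3$. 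In the cases where $x$ varies, the same irrationality argument applied to $D^m$ and $P\mathbb{Z}^2$ shows that no stabilizer element can have $m \neq 0$.

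For item (3), the leaves of $\mathcal{L}_{456}$ are $\{x = A,\ x_1 = B,\ x_2 = C\}$. The subgroup $\{(i\tau n, iPb)\}$ preserves every such leaf, since it does not touch the real parts, and it acts cocompactly, giving $T^3$. For $\mathcal{L}_{234}$ the leaf $\{x = A,\ y_1 = B,\ y_2 = C\}$ is preserved by $\{(i\tau n, Pa)\}$, again with compact quotient $T^3$. This also agrees with $b_1 = 3$ from Theorem \ref{NumBettiSLagsCSNakamura}. I expect the main obstacle to be the normalisation step in item (1): justifying rigorously that the imaginary translations $Pb$ cannot be used to stabilize a leaf with $(B,C) \neq 0$.
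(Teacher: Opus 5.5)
Your items (2) and (3) are fine up to one repair, but item (1) has a genuine gap. The normalisation step you single out cannot be carried out because it is false: imaginary translations $Pb$ with $b\neq 0$ do stabilise leaves with $(B,C)\neq 0$. Hence the \emph{only if} half of (1) fails as stated.

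Take $m_0\neq 0$ and $b\in\mathbb{Z}^2\setminus\{0\}$, and set $(B,C)^T:=(I-D^{m_0})^{-1}Pb$; this is possible since $e^{\pm\lambda m_0}\neq 1$. Then $\gamma=(m_0,\,iPb)\in\Gamma_{P,\tau}$ satisfies $D^{m_0}(B,C)^T+Pb=(B,C)^T$. So $\gamma$ preserves $\tilde L=\{y=A,\ (y_1,y_2)=(B,C)\}$ for every $A$, acting by $(x,x_1,x_2)\mapsto(x+m_0,e^{\lambda m_0}x_1,e^{-\lambda m_0}x_2)$. Together with the translations $(0,Pa)$ it acts cocompactly, and the leaf is the mapping torus of $M^{m_0}$, hence compact.

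In lattice coordinates $P^{-1}(B,C)^T=(I-M^{m_0})^{-1}b$. For $M=\begin{pmatrix}2&3\\1&2\end{pmatrix}$, $m_0=1$, $b=(1,0)^T$ this is $(\tfrac12,-\tfrac12)^T\notin\mathbb{Z}^2$. So this compact leaf is not the leaf through $B=C=0$, whatever representative you choose.

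The correct statement is this: a leaf of $\mathcal{L}_{123}$ is closed iff the class of $(B,C)$ is a periodic point of the hyperbolic automorphism of $\mathbb{R}^2/P\mathbb{Z}^2$ induced by $D$, i.e.\ iff $P^{-1}(B,C)^T\in\mathbb{Q}^2$. This is a dense set of leaves. The same holds for $\mathcal{L}_{156}$ with $P^{-1}(A,B)^T$.

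Neither of your proposed repairs can work. Elements of $\Gamma_L$ with $m=0$ have $Pb=0$, so composing with them never changes $b$. Changing the representative of the leaf only moves $(B,C)$ within its orbit under $v\mapsto D^{m'}v+Pb'$.

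The paper's own argument has the same hole. It imposes the stabilising condition on the lattice element with translation part $n\,Pb$ as if it were $\gamma^n$. In the semidirect product, however, $\gamma^n=(nm_0,\ i\sum_{k=0}^{n-1}D^{km_0}Pb)$, which preserves the leaf automatically, so no contradiction follows. What you can actually prove in (1) is the \emph{if} direction; your mapping-torus argument for $B=C=0$ is fine.

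Items (2) and (3) follow the paper's route. They rest on the fact that no nonzero vector of $P\mathbb{Z}^2$ has a vanishing coordinate (the content of Lemma \ref{LemmaNumTh}), and on explicit translation subgroups for (3). They are correct except for one false intermediate claim. For $\mathcal{L}_{135}$ and $\mathcal{L}_{126}$ it is not true that no stabiliser element has $m\neq 0$: for instance $(1,0)$ preserves $\{x_1=0,\ y=B,\ y_2=0\}$. What the irrationality actually gives, for $\mathcal{L}_{135}$, is that a stabiliser element with $m=0$ must have $n=0$, $(Pa)_1=0$ and $(Pb)_2=0$, hence is trivial. So $\gamma\mapsto m$ embeds the stabiliser in $\mathbb{Z}$, its rank is at most $1$, and the leaf cannot be compact. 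That is the argument to write down.
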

The proof of Theorem \ref{TopologyFoliations} is contained in a series of intermediate lemmata and propositions. We start addressing case (1).
\begin{proposizione}\label{Topology123And156}
     A leaf of the foliation $\mathcal{L}_{123}$ is a closed (and therefore compact) submanifold of $N$ if and only if $B = C = 0$. Similarly, a leaf of $\mathcal{L}_{156}$ is closed if and only if $A = B = 0$.
\end{proposizione}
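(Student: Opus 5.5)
The plan is to reduce closedness of a leaf to a discreteness question for an orbit in $\R^2$, and then use the hyperbolicity of $M$. Lift a leaf $L\in\mathcal{L}_{123}$ to $G=\C\ltimes_\rho\C^2$. There it is the slice
\[\tilde L_{A,B,C}=\{(x+iA,\ x_1+iB,\ x_2+iC)\},\]
with $x,x_1,x_2$ free, since $E_1,E_2,E_3$ only move $x,x_1,x_2$. The leaf is closed in $N$ if and only if the saturation $\Gamma_{P,\tau}\cdot\tilde L_{A,B,C}$ is closed in $G$. Equivalently, the set of slices $\tilde L_{A',B',C'}$ met by this saturation must be locally finite in the parameters.

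First I compute how a lattice element $\gamma=(m+i\tau n,\ Pa+iPb)$, with $m,n\in\Z$ and $a,b\in\Z^2$, acts on the parameters. The $y$--coordinate is shifted to $A+\tau n$. The imaginary part of the $\C^2$--coordinate is sent to $D^m(B,C)^t+Pb$, because $\rho(w')$ is the real diagonal matrix $D^{\operatorname{Re} w'}$. The real coordinates only move inside the slice. The $A$--values $A+\tau\Z$ form a discrete set, so the leaf is closed if and only if
\[S_{B,C}=\{D^m(B,C)^t+Pb:\ m\in\Z,\ b\in\Z^2\}\]
is discrete in $\R^2$. Writing $(B,C)^t=Pv$ and using $D^mP=PM^m$, this becomes $S_{B,C}=P\,(M^m v+\Z^2)$. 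Hence the leaf is closed if and only if the orbit of $v$ under $M$ in $T^2=\R^2/\Z^2$ is a discrete, hence finite, set.

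For the direction $B=C=0$: then $v=0$ and $S=P\Z^2$, which is discrete. In this case the leaf is the image of $\{y=A,\ y_1=y_2=0\}$, on which the rank-$3$ subgroup $\Z\ltimes P\Z^2$ of $\Gamma_{P,\tau}$ acts cocompactly. So the leaf is a compact $T^2$--bundle over $S^1$ with monodromy $M$. The same argument, with the roles of $(x,x_1,x_2)$ and $(x,y_1,y_2)$ exchanged, handles $\mathcal{L}_{156}$: there the relevant constants are $(A,B)$, the real parts of $z_1,z_2$.

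For the converse, suppose $(B,C)\neq 0$. Since $M\in\SL(2,\Z)$ has eigenvalues $e^{\lambda_1}\neq e^{\lambda_2}$, the map $M$ is an Anosov automorphism of $T^2$. For $v$ not periodic, the orbit $\{M^m v\}$ is infinite in the compact torus, so it accumulates. Then $S_{B,C}$ is not discrete and the leaf is not closed.

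The main obstacle is the periodic case, namely $v\in\mathbb{Q}^2\setminus\Z^2$. Such $v$ have finite orbits, and in the naive computation above the corresponding leaves also look closed. To match the statement exactly, I would first pin down how the constants $A,B,C$ parametrize leaves in the description of $\mathcal{L}_{123}$: each leaf is determined only up to the lattice action, and $(B,C)$ should be read through the representative obtained by taking the quotient by $\Gamma''_P$. I would then check whether, with this normalization, the rational points are excluded or are identified with the case $B=C=0$ (for instance via a finite cover or a change of lattice). If this cannot be arranged, the honest conclusion is that closedness holds exactly for the finite $M$--orbits. The case $B=C=0$, equivalently $v\in\Z^2$, is then the distinguished representative, and I would state the result precisely in that form.
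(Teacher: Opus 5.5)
Your reduction to the $M$-orbit of $v=P^{-1}(B,C)^t$ in $T^2=\R^2/\Z^2$ is correct, and so are the ``if'' direction and the non-periodic case. The obstacle in your last paragraph cannot be normalized away, however. It is a genuine counterexample: the ``only if'' half of the proposition is false, and no proof of it exists.

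Two triples give the same leaf of $\mathcal{L}_{123}$ exactly when $A'-A\in\tau\Z$ and $v'\in M^m v+\Z^2$ for some $m\in\Z$. So a leaf with $v\in\mathbb{Q}^2\setminus\Z^2$ is never the leaf of a triple $(A',0,0)$, since that would force $v=-M^{-m}b\in\Z^2$. Yet every rational $v$ is periodic, because $M$ permutes the finite set $(\frac1q\Z/\Z)^2$. The stabilizer of the slice then contains a rank-$3$ subgroup acting cocompactly.

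Concretely, take the paper's example $M=\begin{pmatrix}2&3\\1&2\end{pmatrix}$ and $v=(1/2,0)^t$. Then $M^2v=v+(3,2)^t$. From $PM^2=D^2P$ you get $e^{2\lambda}p_{11}=7p_{11}+4p_{12}$ and $e^{-2\lambda}p_{21}=7p_{21}+4p_{22}$. Hence the lattice element with $a=2$, $(b_1,b_2)=(-3,-2)$ and all other labels zero fixes $(B,C)=(p_{11}/2,\,p_{21}/2)\neq(0,0)$. Together with the translations by $P\Z^2$ in $(x_1,x_2)$, it generates a copy of $\Z\ltimes\Z^2$ acting cocompactly on $\{y=A,\ y_1=B,\ y_2=C\}$. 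So this leaf is a compact $T^2$-bundle over $S^1$ with monodromy $M^2$. The same happens for $\mathcal{L}_{156}$ with $(A,B)$ in place of $(B,C)$.

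The paper's proof reaches $B=C=0$ by asserting that if the element labelled $(a,b_1,b_2)$ fixes $(B,C)$, then $e^{\lambda na}B+n(p_{11}b_1+p_{12}b_2)=B$ for all $n$. In other words, it treats the element labelled $(na,nb_1,nb_2)$ as the $n$-th power of the first. In the semidirect product, the $n$-th power actually has imaginary translation $(I+D^a+\dots+D^{(n-1)a})Pb$. Its fixed-point condition follows automatically from that of the generator, so the exponential-versus-linear comparison never arises.

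What your argument actually proves, and what you should state, is the following. A leaf of $\mathcal{L}_{123}$ is closed if and only if $P^{-1}(B,C)^t\in\mathbb{Q}^2$. A leaf of $\mathcal{L}_{156}$ is closed if and only if $P^{-1}(A,B)^t\in\mathbb{Q}^2$. The case $B=C=0$ (resp.\ $A=B=0$) is the one where the stabilizer is all of $\Z\ltimes P\Z^2$, and it is the only case the later construction of the section $\sigma$ uses.

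One step of your converse should be tightened. ``The orbit accumulates, so $S_{B,C}$ is not discrete'' does not by itself give ``the leaf is not closed''. Add that an infinite $M$-orbit in $T^2$ is never closed. A closed countable subset of $T^2$ has an isolated point by Baire. Since powers of $M$ act transitively on the orbit by homeomorphisms, every point would then be isolated, making the orbit compact and discrete, hence finite.
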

\begin{proof}
Let us consider $\mathcal{L}_{123}$.
Given $X = (x, x_1, x_2, A,B,C) \in \R^6$ and $(a, a_1, a_2, b, b_1, b_2) \in \mathbb{Z}^6$, the image under the
action of the lattice is
\[\tilde{X} = (x + a, e^{\lambda a}x_1 + p_{11}a_{1} + p_{12}a_2,e^{-\lambda a}x_2 + p_{21}a_{1} + p_{22}a_2, A + b\tau, e^{\lambda a}B + p_{11}b_{1} + p_{12}b_2, e^{-\lambda a}C + p_{21}b_{1} + p_{22}b_2)\]
and we need to determine a rank 3 sub-lattice so that
\[\tilde{X} \in \{y = A, y_1 = B, y_2 = C\}.\]
Of course we must have $b = 0$. Instead, the other conditions read
\[e^{\lambda a}B + p_{11}b_{1} + p_{12}b_2 = B \qquad e^{-\lambda a}C + p_{21}b_{1} + p_{22}b_2 = C.\]
These equations should be satisfied by all the elements of any rank 1 sub-lattice, which implies in particular that for every $n \in \mathbb{N}$ 
\[e^{\lambda na}B + np_{11}b_{1} + np_{12}b_2 = B \qquad e^{-\lambda na}C + np_{21}b_{1} + np_{22}b_2 = C.\]
This can hold if and only if we have $B = C = 0$ and therefore $b_1 = b_2 = 0$. Therefore, the rank 3 sub-lattice can exist if and only if $B = C = 0$ and it is given by points $(a, a_1, a_2) \in \mathbb{Z}$ acting on $X$ as
\[X \longmapsto (x + a, e^{\lambda a}x_1 + p_{11}a_{1} + p_{12}a_2,e^{-\lambda a}x_2 + p_{21}a_{1} + p_{22}a_2, A, 0, 0).\]
The proof for $\mathcal{L}_{156}$ is carried out in the same way, working on the second and third coordinates (which leads $A = B = 0$).
\end{proof}
Following the same reasoning we can address cases (2) and (3). Case (3) is easier.
\begin{proposizione}
    All the leaves of the foliations $\mathcal{L}_{234}$, $\mathcal{L}_{456}$ are closed submanifolds of $N$.
\end{proposizione}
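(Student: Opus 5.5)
We follow the strategy of Proposition \ref{Topology123And156}: a leaf is closed exactly when the subgroup of $\Gamma_{P,\tau}$ preserving its preimage in $G\cong\R^6$ acts cocompactly on it, i.e.\ contains a rank $3$ sub-lattice. Write a point as $(x,x_1,x_2,y,y_1,y_2)$. An element $(a,a_1,a_2,b,b_1,b_2)\in\Z^6$ of the lattice acts by
\[
(x,x_1,x_2,y,y_1,y_2)\longmapsto \bigl(x+a,\; e^{\lambda a}x_1+p_{11}a_1+p_{12}a_2,\; e^{-\lambda a}x_2+p_{21}a_1+p_{22}a_2,\; y+b\tau,\; e^{\lambda a}y_1+p_{11}b_1+p_{12}b_2,\; e^{-\lambda a}y_2+p_{21}b_1+p_{22}b_2\bigr).
\]
The key observation is that the exponential factors involve only $x$, through $a$. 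The leaves of $\mathcal{L}_{234}$ and $\mathcal{L}_{456}$ fix the $x$-coordinate ($x=A$) and leave $y$ free.

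For $\mathcal{L}_{456}=\{x=A,\,x_1=B,\,x_2=C\}$, the condition that the lattice element preserve the leaf reads
\[
x+a=A,\qquad e^{\lambda a}B+p_{11}a_1+p_{12}a_2=B,\qquad e^{-\lambda a}C+p_{21}a_1+p_{22}a_2=C.
\]
Since $x=A$ on the leaf, the first equation forces $a=0$. Then the exponentials disappear and the remaining equations become $P(a_1,a_2)^T=0$, so $a_1=a_2=0$ because $P$ is invertible. The remaining generators $(b,b_1,b_2)\in\Z^3$ are free. On the leaf they act by
\[
(y,y_1,y_2)\mapsto (y+b\tau,\; y_1+p_{11}b_1+p_{12}b_2,\; y_2+p_{21}b_1+p_{22}b_2),
\]
which is a lattice of translations of $\R^3$ with fundamental domain $[0,\tau]\times P[0,1]^2$. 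Hence the stabilizer $\Gamma_L$ is a rank $3$ lattice acting cocompactly, and the image of the leaf is $\Gamma_L\backslash\R^3\cong T^3$. This works for every $(A,B,C)$.

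For $\mathcal{L}_{234}=\{x=A,\,y_1=B,\,y_2=C\}$ the argument is the same. Again $a=0$. The conditions on $y_1,y_2$ force $b_1=b_2=0$ by invertibility of $P$. The free generators are $(a_1,a_2,b)$, acting by translations on $(x_1,x_2,y)$, so again every leaf is a torus.

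The one step that needs care is showing that the immersed image $\Gamma_L\backslash L\to N$ is a closed embedded submanifold, and not merely that it is compact. Compactness gives closedness of the image directly, since the image of a compact space is compact. For injectivity we argue as follows. If two points of $L$ are identified by some $\gamma\in\Gamma_{P,\tau}$, then $\gamma$ maps $L$ to $L$: the leaf is determined by the fixed coordinates, and $\gamma$ preserves those coordinates at one point, hence everywhere, because with $a=0$ the action on them is independent of the point. So $\gamma\in\Gamma_L$, and the map is injective. Since the leaf is also a leaf of a foliation, this injective immersion of a compact manifold is an embedding, which concludes the proof.
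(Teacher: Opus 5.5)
Your proof is correct and follows the same route as the paper: the stabilizer conditions force $a=0$, invertibility of $P$ kills the two remaining lattice coefficients, and the three free generators act on the leaf as a translation lattice ($P\Z^2\oplus\tau\Z$), so the leaf is compact. Your injectivity argument, which shows the compact leaf is embedded, is a welcome addition that the paper omits. You also write the first condition correctly as $x+a=A$, where the paper's display has the typo $A+a=0$.
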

\begin{proof}
    Proceeding as before, we just need to observe that the equations we must impose in this case read ($\mathcal{L}_{234}$ on the left, $\mathcal{L}_{456}$ on the right)
    \[\begin{cases}
        A + a = 0 \\
        e^{\lambda a}B + p_{11}b_{1} + p_{12}b_2 = B \\
        e^{-\lambda a}C + p_{21}b_{1} + p_{22}b_2 = C \\
    \end{cases} \text{ or }\begin{cases}
        A + a = 0 \\
        e^{\lambda a}B + p_{11}a_{1} + p_{12}a_2 = B \\
        e^{-\lambda a}C + p_{21}a_{1} + p_{22}a_2 = C \\
    \end{cases}. \]
    In both cases first equation leads to $a = 0$, which reduces the remaining two equations to
    \[P\binom{b_1}{b_2} = \binom{0}{0} \text{ or } P\binom{a_1}{a_2} = \binom{0}{0}\]
    giving $b_1 = b_2 = 0$ resp., $a_1 = a_2 = 0$. Therefore, the rank 3 sub-lattice always exists and every leaf in closed (therefore compact).
\end{proof}
Finally, we solve case (2): this requires a sort of number-theoretic argument. We start from $\mathcal{L}_{246}$ and $\mathcal{L}_{345}$.
\begin{lemma}\label{Lemma246}
    A leaf of the special Lagrangian foliations $\mathcal{L}_{246}$ or $\mathcal{L}_{345}$ is a closed submanifold of $N$ if and only if there exists a matrix $M'$ with integer entries, and both non-zero columns such that 
    \[PM' = D\]
    for some diagonal matrix $D$.
\end{lemma}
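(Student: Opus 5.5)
Following the strategy of Proposition \ref{Topology123And156}, the plan is to write down, for a leaf of $\mathcal{L}_{246}$, the conditions for a lattice element $(a,a_1,a_2,b,b_1,b_2)$ to preserve it, and then ask when these admit a rank 3 sub-lattice of solutions. A leaf of $\mathcal{L}_{246}$ is $\{x = A,\ x_2 = B,\ y_1 = C\}$, with free coordinates $x_1, y, y_2$. The image of a point under the lattice has $x$-coordinate $A+a$, $x_2$-coordinate $e^{-\lambda a}B + p_{21}a_1 + p_{22}a_2$ and $y_1$-coordinate $e^{\lambda a}C + p_{11}b_1 + p_{12}b_2$. The leaf is preserved exactly when
\[ a = 0,\qquad p_{21}a_1 + p_{22}a_2 = 0,\qquad p_{11}b_1 + p_{12}b_2 = 0, \]
since $a=0$ kills the exponential factors. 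The element $b$ (the $\tau$-direction in $y$) is free and always supplies one rank. For $\mathcal{L}_{345}$ the computation is the same with the roles of the two rows of $P$ exchanged: the fixed coordinates are $x, x_1, y_2$, giving $p_{11}a_1+p_{12}a_2=0$ and $p_{21}b_1+p_{22}b_2=0$.

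The stabiliser is therefore $\Z b \oplus K_1 \oplus K_2$, where
\[ K_1=\{(a_1,a_2)\in\Z^2: p_{21}a_1+p_{22}a_2=0\},\qquad K_2=\{(b_1,b_2)\in\Z^2: p_{11}b_1+p_{12}b_2=0\}. \]
Each $K_i$ has rank at most $1$, because $P$ is invertible so each row is nonzero. The leaf is closed if and only if the stabiliser has rank $3$, i.e.\ both $K_1$ and $K_2$ are nonzero. I will also state explicitly the standard fact, used implicitly earlier, that rank $3$ of the stabiliser is equivalent to closedness: the leaf is then a quotient of $\R^3$ by a cocompact subgroup, and otherwise its image accumulates.

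Next I translate this into the matrix condition. Choose nonzero integer vectors $u=(b_1,b_2)^T\in K_2$ and $v=(a_1,a_2)^T\in K_1$, and set $M'=(v\,|\,u)$. Then
\[ PM'=\begin{pmatrix} p_{11}a_1+p_{12}a_2 & 0\\ 0 & p_{21}b_1+p_{22}b_2\end{pmatrix}, \]
which is diagonal, and $M'$ has both columns nonzero. Conversely, if $PM'=D$ is diagonal with nonzero integer columns, the off-diagonal zeros say precisely that the first column lies in $K_1$ and the second in $K_2$. Both are then nontrivial and the stabiliser has rank $3$. For $\mathcal{L}_{345}$ the same construction works with the columns swapped, or equivalently with $D$ anti-diagonal adjusted by a column swap, which is harmless since swapping columns keeps $M'$ integral.

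The main obstacle is the bookkeeping of which row of $P$ acts on which coordinate, and matching the $x$-block (the $a_i$) and the $y$-block (the $b_i$) with the correct constraint. I expect the lattice action formula in the proof of Proposition \ref{Topology123And156} to settle this, with $(x_1,x_2)$ transformed by $P(a_1,a_2)^T$ and $(y_1,y_2)$ by $P(b_1,b_2)^T$. A secondary point is to remark that $D$ is allowed to be singular. The conditions force nothing beyond the vanishing of the off-diagonal entries, and the diagonal entries are automatically nonzero because $P$ is invertible and the columns of $M'$ are nonzero.
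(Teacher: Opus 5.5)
Your proposal is correct and follows the paper's proof. Both use the lattice action from Proposition \ref{Topology123And156} to reduce to $a=0$, $p_{21}a_1+p_{22}a_2=0$, $p_{11}b_1+p_{12}b_2=0$, read these as the vanishing of the off-diagonal entries of $PM'$ with columns $(a_1,a_2)^T$ and $(b_1,b_2)^T$, and swap the columns for $\mathcal{L}_{345}$.

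Your extra details (the stabiliser $\Z\oplus K_1\oplus K_2$ with each $K_i$ of rank at most one, and the equivalence of rank $3$ with closedness) fill in what the paper leaves implicit. Only your closing remark that $D$ may be singular is muddled, since, as you yourself note, $D$ is automatically invertible.
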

\begin{proof}
    Let us consider $\mathcal{L}_{246}$; the proof for $\mathcal{L}_{345}$ is the same just by inter-changing $(a_1,a_2)$ with $(b_1, b_2)$. Working as in the proof of Proposition \ref{Topology123And156}, the conditions for having the rank 3 sub-lattice preserved translate into
    \[\begin{cases}
        a = 0 \\
        p_{11}b_{1} + p_{12}b_{2} = 0\\
        p_{21}a_1 + p_{22}a_2 = 0.
    \end{cases}\]
    In other words, we need an integer valued matrix 
    \[M' = \begin{pmatrix}
        a_1 & b_1 \\
        a_2 & b_2 \\
    \end{pmatrix}\]
    with both non-vanishing columns such that $PM' = D$ is a diagonal matrix.
\end{proof}
We shall then ask whether such a matrix exists and the following argument proves the answer is negative.
\begin{lemma}\label{LemmaNumTh}
    Let $M \in \SL(2;\mathbb{Z})$, $M \ne \pm I$, such that there exists a matrix $P \in \GL(2;\R)$ for which $PMP^{-1} = D = \text{diag}(d_1, d_2)$ diagonal matrix. Then there exist no integer-valued matrix $M' = \begin{pmatrix}
        a_1 & b_1 \\
        a_2 & b_2 \\
    \end{pmatrix}$ with both non-vanishing columns such that $PM'$ is a diagonal matrix.
\end{lemma}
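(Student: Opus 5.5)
The plan is to reinterpret the condition ``$PM'$ diagonal'' as a statement about eigenvectors of $M$. Then I will use the integrality of $M$ and $M'$ to force the eigenvalues of $M$ to be rational, hence $\pm1$. That will contradict $M\neq\pm I$. In fact I expect the argument to need only \emph{one} non-vanishing column of $M'$, which is stronger than what is required.

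First step: write $M'=(m_1\,|\,m_2)$ with $m_1=\binom{a_1}{a_2}$ and $m_2=\binom{b_1}{b_2}$. If $PM'=\mathrm{diag}(d_1',d_2')$, then $Pm_1=d_1'e_1$, where $e_1,e_2$ is the standard basis. Since $P$ is invertible and $m_1\neq 0$, we get $d_1'\neq 0$ and $m_1=d_1'P^{-1}e_1$. From $PMP^{-1}=D$ we have $MP^{-1}e_1=P^{-1}De_1=d_1P^{-1}e_1$. So $P^{-1}e_1$, and hence $m_1$, is an eigenvector of $M$ with eigenvalue $d_1$; that is, $Mm_1=d_1m_1$.

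Second step: since $M$ and $m_1$ have integer entries and $m_1$ has some non-zero entry $m$, the eigenvalue $d_1$ equals $(Mm_1)_\ast/m$ for that coordinate, so $d_1\in\mathbb{Q}$. But $d_1$ is a root of the characteristic polynomial $t^2-(\mathrm{tr}M)t+1$, which is monic with integer coefficients. By the rational root theorem $d_1\in\mathbb{Z}$, and $d_1\mid 1$, so $d_1=\pm1$. Because $d_1d_2=\det M=1$, also $d_2=d_1$. Thus $D=\pm I$, and $M=P^{-1}DP=\pm I$, contradicting the hypothesis. The same argument applied to $m_2$ would give the conclusion via $d_2$.

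There is no serious obstacle. The only point requiring care is the first step: checking that the columns of $M'$ really are eigenvectors, and not merely vectors in some invariant set. This is where the direction of the conjugation $PMP^{-1}=D$ matters, since it makes the columns of $P^{-1}$ the eigenvectors of $M$. Combined with Lemma \ref{Lemma246} and $M\neq\pm I$, this lemma shows that no leaf of $\mathcal{L}_{246}$ or $\mathcal{L}_{345}$ is closed. In the Nakamura setting $M\ne\pm I$ holds because the eigenvalues $e^{\lambda_1},e^{\lambda_2}$ are distinct (this is implicit in the construction, e.g. whenever $\lambda_1=-\lambda_2\neq 0$).
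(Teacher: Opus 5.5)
Your proof is correct. It rests on the same underlying fact as the paper's proof (a matrix in $\SL(2;\mathbb{Z})$ that is diagonalizable over $\R$ and differs from $\pm I$ has no eigenvector proportional to an integer vector), but you reach it by a more direct route.

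The paper works with the rows of $P$:
\begin{itemize}
\item It writes the off-diagonal conditions as $p_{11}b_1+p_{12}b_2=0$ and $p_{21}a_1+p_{22}a_2=0$.
\item It uses the two-dimensional fact that a row of $P$ is orthogonal to a nonzero integer vector if and only if it is a real multiple of an integer vector.
\item It shows the eigenvalues of $M$ are irrational by a case analysis on whether $\mathrm{Tr}(M)^2-4$ is a perfect square.
\item It applies an eigenvector argument to $M^T$, whose eigenvectors are the rows of $P$.
\end{itemize}

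You instead observe that the columns of $M'$ are themselves right eigenvectors of $M$, being multiples of the columns of $P^{-1}$. Integrality then makes the eigenvalue rational at once, and the rational root theorem together with $d_1d_2=1$ forces $D=\pm I$.

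What your route buys:
\begin{itemize}
\item It avoids the transposition and the orthogonal-complement step.
\item The rational root theorem replaces the somewhat ad hoc discriminant case analysis.
\item It makes explicit that a single nonzero column already gives a contradiction. The paper's argument also yields this implicitly, since it rules out each equation separately, but does not say so.
\end{itemize}

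What the paper's route buys is a statement about $P$ alone: its rows are not proportional to integer vectors. The paper reuses this for $\mathcal{L}_{135}$ and $\mathcal{L}_{126}$ by interchanging the columns of $M'$. Your argument covers that case just as easily: $Pm_1\in\R e_2$ simply makes $m_1$ an eigenvector for $d_2$.
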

\begin{proof}
    Consider the equation $PM' = D'$ diagonal matrix. It translates into the equations
    \[p_{11}b_1 + p_{12}b_2 = 0 \qquad p_{21}a_1 + p_{22}a_2=0. \]
    Notice that each one of these equations, let us take the first one, is satisfied if and only if there exists a non-zero constant $c \in \R^*$ such that $(cp_{11}, cp_{12}) \in \mathbb{Z}^2 $ (it will also be non-zero being $P$ invertible). We show that this cannot happen.\\[5pt]
    Consider the characteristic polynomial of $M$, which is given by
    \[\chi(M)(\lambda) = \lambda^2 - \text{Tr}(M)\lambda + 1\]
    and whose roots are 
    \[\lambda_{\pm} = \dfrac{\text{Tr}(M) \pm \sqrt{\text{Tr}(M)^2 - 4}}{2}.\]
    We first show that $\text{Tr}(M)^2 - 4$ is not a perfect square and therefore $\lambda_\pm$ are both irrational. Indeed, suppose $\text{Tr}(M)^2 - 4 = k^2 \in \mathbb{Z}$. Then $\lambda_\pm \in \frac{1}{2}\mathbb{Z}$ since $M$ is a integer matrix. Then two things can happen:
    \begin{itemize}
        \item $\lambda_\pm \in \mathbb{Z}$: since $\det(M)= 1$ $\lambda_\pm$ can only be both 1 or both -1, which leads to $M = \pm I$;
        \item $\lambda_\pm \in \frac{1}{2}\mathbb{Z}$ (i.e., at least one of them is not a multiple of 2): then the only two possibilities are $(\lambda_+,\lambda_-) = (\pm 2, \pm\frac{1}{2})$, but then $\text{Tr}(M) = \lambda_+ + \lambda- \ne \mathbb{Z}$.
    \end{itemize}
    Therefore, the eigenvalues of $M$ are irrational.\\[5pt]
    Let $v_+ = \binom{x_+}{y_+}$ be an eigenvector of eigenvalue $\lambda_+$. Since $Mv_+ = \lambda_+v_+$ we get equations
    \[ (m_{11} - \lambda_+)x_+ + m_{12}y_+ = 0 \qquad m_{21}x_+ + (m_{22} - \lambda_+)y_+ = 0 \]
    with $m_{11} - \lambda_+, m_{22} - \lambda_+$ irrational, while $ m_{12}, m_{21} \in \mathbb{Z}$. This implies in particular that there cannot exist any constant $c \in \R^*$ such that $c(x_+, y_+) \in \mathbb{Z}^2$ (otherwise by simple multiplication and substitutions we would get $m_{11} - \lambda_+, m_{22} - \lambda_+$ rational). Same argument with an eigenvector $v_- = \binom{x_-}{y_-}$ of eigenvalue $\lambda_-$ show that there cannot exist any $d \in \R^*$ such that $d(x_-, y_-) \in \mathbb{Z}^2$.\\[5pt]
    Finally, consider $M^{T}$ instead of $M$, for which it still holds $M^{T} \in \SL(2; \mathbb{Z}) \setminus \{\pm I\}$ and $(P^T)^{-1}M^TP^T = D$. Recall in particular that the columns of $P^T$ are eigenvectors of $M^T$. Therefore, applying the previous argument as before we can show that there do not exist constants $c, d \in \R^*$ such that $c(p_{11}, p_{12}), d(p_{21}, p_{22}) \in \mathbb{Z}^2$ which proves the lemma.
\end{proof}
From Lemmata \ref{Lemma246}, \ref{LemmaNumTh} we finally get the following.
\begin{proposizione}\label{Topology246and345}
    No leaf of the special Lagrangian foliations $\mathcal{L}_{246}$, $\mathcal{L}_{345}$ is a closed submanifold of $N$.
\end{proposizione}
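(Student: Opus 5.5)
The plan is to combine Lemma \ref{Lemma246} with Lemma \ref{LemmaNumTh}, checking first that the hypotheses of the latter hold in our setting.

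\textbf{Step 1: the hypotheses of Lemma \ref{LemmaNumTh} hold.} The Nakamura manifold $N$ is built from a matrix $M\in\SL(2,\mathbb{Z})$ with positive eigenvalues $e^{\lambda_1},e^{\lambda_2}$ and a matrix $P$ with $PMP^{-1}=D$ diagonal. The construction is non-trivial (the group is solvable but not nilpotent), so $\lambda_1=-\lambda_2\neq 0$. Hence $M\neq \pm I$: $M=I$ would force $\lambda_1=\lambda_2=0$, and $M=-I$ has negative eigenvalues.

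\textbf{Step 2: the reduction in Lemma \ref{Lemma246}.} Suppose a leaf of $\mathcal{L}_{246}$ (respectively $\mathcal{L}_{345}$) is closed. Then a rank $3$ sub-lattice of $\Gamma_{P,\tau}$ preserves its universal cover. By Lemma \ref{Lemma246}, this produces an integer matrix
\[M'=\begin{pmatrix} a_1 & b_1\\ a_2 & b_2\end{pmatrix}\]
with both columns non-zero such that $PM'$ is diagonal. The rank condition is what guarantees non-vanishing columns. With $a=0$ forced, the sub-lattice must draw its remaining generators from the $(a_1,a_2)$ and $(b_1,b_2)$ directions, subject to $p_{21}a_1+p_{22}a_2=0$ and $p_{11}b_1+p_{12}b_2=0$. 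If one column were forced to vanish, the stabilizer would have rank at most $2$, since the $\tau\mathbb{Z}$ direction is excluded by the coordinate fixed by the leaf.

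\textbf{Step 3: contradiction.} Lemma \ref{LemmaNumTh} says no integer matrix $M'$ with both columns non-zero makes $PM'$ diagonal. This contradicts Step 2, so no leaf of either foliation is closed. For $\mathcal{L}_{345}$ the same argument applies after interchanging $(a_1,a_2)$ with $(b_1,b_2)$, exactly as in Lemma \ref{Lemma246}.

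\textbf{Main obstacle.} The delicate point is Step 2, specifically matching the rank-$3$ requirement with the condition ``both columns non-zero''. I will verify it by listing the possible generators of the stabilizer subgroup. The coordinates fixed on a leaf are $A,B$ together with the $(x_2,y_1)$-type coordinates in the semidirect product action. I will check that the translations allowed in each of the two $P\mathbb{Z}^2$ factors together give rank at most $2$ unless both the kernel of row $1$ of $P$ and the kernel of row $2$ of $P$ meet $\mathbb{Z}^2$ non-trivially. Lemma \ref{LemmaNumTh} rules out exactly this, because the rows of $P$ are eigenvectors of $M^T$ and are therefore irrational directions.
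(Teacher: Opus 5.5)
Your proposal is correct and follows the paper's argument: the proposition is obtained by combining Lemma~\ref{Lemma246} with Lemma~\ref{LemmaNumTh}, and your check that $\lambda\neq 0$ forces $M\neq\pm I$ is a useful addition. One slip in your side remark on Step~2 is that the $\tau\mathbb{Z}$ direction is \emph{not} excluded: $y$ is a free coordinate on the leaves of both $\mathcal{L}_{246}$ and $\mathcal{L}_{345}$, so $b$ is unconstrained, and the stabilizer is $\mathbb{Z}$ plus two integer kernels of rank at most $1$ each; this extra $\mathbb{Z}$ is exactly why rank $3$ is equivalent to both columns of $M'$ being non-zero (if it were excluded, rank $3$ would be impossible outright and Lemma~\ref{LemmaNumTh} would not be needed).
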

Now that we have the previous lemmata, the case of  $\mathcal{L}_{135}$ and $\mathcal{L}_{126}$ is  easier.
\begin{proposizione}
    No leaf of the special Lagrangian foliations $\mathcal{L}_{135}$ and $\mathcal{L}_{126}$ is a closed submanifold of $N$.
\end{proposizione}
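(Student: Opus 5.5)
We follow the same scheme as in Proposition \ref{Topology123And156} and Lemma \ref{Lemma246}: a leaf is closed if and only if the stabilizer of its universal cover in $\Gamma_{P,\tau}$ contains a rank $3$ sub-lattice. So we first write down, for each foliation, the conditions on $(a,a_1,a_2,b,b_1,b_2)\in\mathbb{Z}^6$ under which the lattice element preserves a leaf. Then we show these conditions never admit a rank $3$ solution set.

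For $\mathcal{L}_{135}=\{x_1=A,\ y=B,\ y_2=C\}$ the stabilizing conditions are
\[
b\tau=0,\qquad e^{\lambda a}A+p_{11}a_1+p_{12}a_2=A,\qquad e^{-\lambda a}C+p_{21}b_1+p_{22}b_2=C .
\]
The first gives $b=0$. Since the leaf direction along $x$ is free, we must decide whether $a\neq 0$ can occur.

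\textbf{Case $a\ne 0$.} Argue as in Proposition \ref{Topology123And156}: the conditions must hold along a whole rank one sub-lattice, i.e.\ for all multiples $n$. Comparing the exponential growth $e^{\lambda n a}$ with the linear growth of the integer terms forces $A=0$ and $C=0$. The conditions then reduce to the $a=0$ case, with the extra requirement $p_{11}a_1+p_{12}a_2=0$ and $p_{21}b_1+p_{22}b_2=0$.

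\textbf{Case $a=0$.} The conditions read $p_{11}a_1+p_{12}a_2=0$ and $p_{21}b_1+p_{22}b_2=0$.

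In either case, a rank $3$ sub-lattice needs nonzero solutions of at least two independent types among $a$, $(a_1,a_2)$, $(b_1,b_2)$. Since $b$ is already forced to vanish, it needs a nonzero integer vector orthogonal to a row of $P$. By the argument in the proof of Lemma \ref{LemmaNumTh}, no nonzero integer vector is orthogonal to $(p_{11},p_{12})$ or to $(p_{21},p_{22})$. That argument shows the rows of $P$, which are eigenvectors of $M^T$, have irrational slope, and in $\mathbb{R}^2$ a nonzero integer vector orthogonal to a row exists exactly when that row is a real multiple of an integer vector. Hence $a_1=a_2=b_1=b_2=b=0$, the stabilizer has rank at most $1$, and no leaf is closed.

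The case $\mathcal{L}_{126}=\{y=A,\ y_1=B,\ x_2=C\}$ is identical after swapping the roles of $(a_1,a_2)$ and $(b_1,b_2)$: we get $b=0$, $p_{11}b_1+p_{12}b_2=0$ and $p_{21}a_1+p_{22}a_2=0$, so the stabilizer again has rank at most $1$.

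The main point requiring care is the rank count. The leaf is $3$-dimensional, but its free directions are one $\mathbb{R}$-direction ($x$) and two directions inside $\mathbb{C}^2$ ($x_1,y_1$ or $x_1,y_2$). A rank $3$ stabilizer must therefore contain translations in those $\mathbb{C}^2$ directions, and this is exactly what the irrationality from Lemma \ref{LemmaNumTh} forbids. So the plan is to state explicitly that closedness requires lattice elements with nonzero $(a_1,a_2)$ and with nonzero $(b_1,b_2)$, and then invoke Lemma \ref{LemmaNumTh}.
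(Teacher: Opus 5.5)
Your setup matches the paper's and the statement is true. The stabilizing conditions you write for $\mathcal{L}_{135}$ are correct, $b=0$ is forced, and the per-row irrationality you take from the proof of Lemma~\ref{LemmaNumTh} does show that the only stabilizer element with $a=0$ is the identity. The gap is your treatment of $a\neq 0$, a step the paper's proof also uses.

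\textbf{Why the case $a\neq 0$ fails.} The $n$-th power of $\gamma=(a+ib\tau,\,P\vec a+iP\vec b)$ in the semidirect product is not the element with coordinates $(na,n\vec a,\dots)$. The real part of its translation component is $P(I+M^{a}+\dots+M^{(n-1)a})\vec a$. Its first entry is $\frac{e^{n\lambda a}-1}{e^{\lambda a}-1}(P\vec a)_1$, which grows exponentially too, not linearly. The condition for $\gamma^n$ is therefore $(e^{n\lambda a}-1)\bigl(A+\frac{(P\vec a)_1}{e^{\lambda a}-1}\bigr)=0$. This is equivalent to the condition for $\gamma$, as it must be, since the stabilizer is a subgroup. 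So nothing forces $A=C=0$, and that claim is in fact false. Take $a=1$, $b=0$, $\vec a=\vec b=(1,0)$. Then $\gamma$ preserves the leaf with $A=-p_{11}/(e^{\lambda}-1)$ and $C=p_{21}/(1-e^{-\lambda})$. Both values are nonzero, because no row of $P$ is proportional to an integer vector. Hence your conclusion $a_1=a_2=b_1=b_2=0$ is wrong for stabilizer elements with $a\neq 0$.

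\textbf{The missing idea.} Your last paragraph gestures at the fix but does not justify it. Under the group law the $w$-components add, so $\gamma\mapsto a$ is a homomorphism $\Gamma_{P,\tau}\to\mathbb{Z}$. Let $S$ be the stabilizer of a leaf. The elements of $S$ in the kernel of this homomorphism have $a=b=0$. They are pure translations satisfying $p_{11}a_1+p_{12}a_2=0$ and $p_{21}b_1+p_{22}b_2=0$ (with the rows swapped for $\mathcal{L}_{126}$). By the irrationality you already have, they are trivial. Therefore $S$ embeds in $\mathbb{Z}$ and has rank at most $1$. It cannot act cocompactly on the $3$-dimensional leaf, so no leaf is closed. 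This argument needs no case split and no condition on $A$ or $C$.

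Minor slip: the free $\mathbb{C}^2$-directions of $\mathcal{L}_{135}$ are $x_2,y_1$, not $x_1,y_1$.
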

\begin{proof}
    Consider $\mathcal{L}_{135}$ (the reasoning for $\mathcal{L}_{126}$ is the same) and work as in the proof of Proposition \ref{Topology123And156}. If we want a rank 3 sub-lattice to be preserved, we here get again $b = 0$ and 
    \[e^{-\lambda a}A + p_{11}a_1 + p_{12}a_2 = A \qquad e^{\lambda a}C + p_{21}b_1 + p_{22}b_2 = C.\]
    Again, this must hold for any rank 1 sub-lattice therefore for every $n \in \mathbb{N}$ it must be
    \[e^{-\lambda na}A + np_{11}a_1 + np_{12}a_2 = A \qquad e^{\lambda na}C + np_{21}b_1 + np_{22}b_2 = C\]
    which leads $A = C = 0$ and equations
    \[p_{11}a_1 + p_{12}a_2 = 0 \qquad p_{21}b_1 + p_{22}b_2 = 0\]
    which by Lemma \ref{LemmaNumTh} (applied interchanging the columns of $M'$) are never simultaneously satisfied for both non-vanishing $(a_1, a_2)$, $(b_1, b_2)$.
\end{proof}
\vspace{0.5cm}
It emerges from Theorem \ref{TopologyFoliations} that the foliations $\mathcal{L}_{234} $ and $\mathcal{L}_{456}$ are peculiar among all the other. We can prove that in this case the leaves are actually real 3-tori.
   \begin{proposizione}\label{ToriLeaves}
       Every leaf of the special Lagrangian foliations $\mathcal{L}_{234} $ and $\mathcal{L}_{456}$ is diffeomorphic to a real 3-torus.
   \end{proposizione}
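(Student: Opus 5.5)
The plan is to identify each leaf explicitly as the quotient of an affine $3$-plane in $G\simeq\R^6$ by its stabilizer in $\Gamma_{P,\tau}$, and to check that this stabilizer acts on the plane by translations forming a rank $3$ lattice. The quotient is then $\R^3/\Z^3\cong T^3$. The closedness statement of Theorem \ref{TopologyFoliations} (3), proved above, already shows that the lift of a leaf to $G$ is a closed plane. So the only remaining issue is to describe the induced action precisely.

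First consider $\mathcal{L}_{456}$. A leaf lifts to the plane $\Pi=\{x=A,\ x_1=B,\ x_2=C\}$, parametrized by $(y,y_1,y_2)\in\R^3$. Write a lattice element as $\gamma=(a+ib\tau,\ P\binom{a_1}{a_2}+iP\binom{b_1}{b_2})$. We use the explicit formula for the left action already written in the proof of Proposition \ref{Topology123And156}. The computation in the proof of the previous proposition shows that $\gamma\Pi\cap\Pi\neq\emptyset$ forces $a=0$ and $a_1=a_2=0$, and that then $\gamma\Pi=\Pi$. So the stabilizer is $\mathrm{Stab}(\Pi)=\{(ib\tau,\ iP\binom{b_1}{b_2})\}\cong\Z^3$.

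The key observation is that $\rho(w')$ depends only on $w'+\bar w'$. Hence for $a=0$ the element $\gamma$ acts on $G$ as a pure translation:
\[(y,y_1,y_2)\mapsto \bigl(y+b\tau,\ (y_1,y_2)+P\tbinom{b_1}{b_2}\bigr).\]
Since $\tau\neq0$ and $P$ is invertible, the translation vectors $\tau e_1$ and $(0,Pe_1)$, $(0,Pe_2)$ are linearly independent in $\R^3$. They therefore generate a full lattice $\Lambda_\Pi$, and $\Pi/\mathrm{Stab}(\Pi)=\R^3/\Lambda_\Pi\cong T^3$.

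The case $\mathcal{L}_{234}$ is identical. The plane is $\{x=A,\ y_1=B,\ y_2=C\}$ with free coordinates $(y,x_1,x_2)$. The stabilizer consists of the elements with $a=0$ and $b_1=b_2=0$, which act by $(y,x_1,x_2)\mapsto(y+b\tau,\ (x_1,x_2)+P\binom{a_1}{a_2})$. Again these translations form a rank $3$ lattice in $\R^3$.

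It remains to justify that the leaf in $N$ is diffeomorphic to $\Pi/\mathrm{Stab}(\Pi)$. Because $\Gamma_{P,\tau}$ acts freely and properly discontinuously on $G$, the projection $G\to N$ restricted to $\Pi$ factors through an injective immersion $\Pi/\mathrm{Stab}(\Pi)\to N$. The image is exactly the leaf, since $\Gamma_{P,\tau}$-translates of $\Pi$ either coincide with $\Pi$ or are disjoint from it. The domain is compact, so this injective immersion is an embedding onto the closed leaf. This is the step I expect to require the most care. The delicate point is confirming from the semidirect product law that elements with $a\neq0$ can never map $\Pi$ to itself, including in the $y$-direction, where $b\tau$ could interfere, and that the stabilizer computed pointwise equals the setwise stabilizer. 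Both facts follow from the explicit coordinate equations displayed in the proof of the previous proposition, read with $A,B,C$ fixed.
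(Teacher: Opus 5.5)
Your proof is correct, but it is organized differently from the paper's. The paper fixes $A$ and rescales $P$ to a matrix $\tilde P$ (a diagonal matrix times $P$, so still $\tilde PM\tilde P^{-1}=D$). It then defines a map on the whole preimage of the leaf in $G$, essentially $(x,x_1,x_2,y,y_1,y_2)\mapsto(e^{-\lambda x}x_1,e^{\lambda x}x_2,y)$ into $\R^3/(\tilde P\Z^2\oplus\tau\Z)$. Finally it checks invariance under every lattice element, including those with $a\neq 0$, via the identity $\tilde PM^a\tilde P^{-1}=D^a$.

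You instead work on the single lift $\Pi=\{x=A,\dots\}$ and discard the elements with $a\neq 0$ because they shift $x$. You then note that the remaining elements act by pure translations, since $\rho(ib\tau)=\mathrm{id}$, and the covering argument gives $L\cong\R^3/(P\Z^2\oplus\tau\Z)$. The step you flag as delicate is in fact immediate: $\gamma\Pi$ is a plane parallel to $\Pi$ with $x=A+a$, so the $x$-coordinate alone rules out $a\neq0$, and $b\tau$ plays no role.

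What your route buys:
\begin{enumerate}
\item It is shorter and needs no conjugation identity.
\item It explicitly supplies injectivity and the embedding property, which the paper leaves implicit (``which if well-defined gives the diffeomorphism'').
\item It avoids the rescaling bookkeeping. There, the paper's $\tilde P=\mathrm{diag}(e^{A\lambda},e^{-A\lambda})P$ should read $\mathrm{diag}(e^{-A\lambda},e^{A\lambda})P$ to be compatible with $q\mapsto[e^{-\lambda A}x_1:e^{\lambda A}x_2:y]$.
\end{enumerate}

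What the paper's route buys is a normalization adapted to the invariant coframe. On the leaf, $\theta^2,\theta^3,\theta^4$ become $d(e^{-\lambda A}x_1)$, $d(e^{\lambda A}x_2)$, $dy$, and this is the form used later to identify the SYZ fibres over $\sigma$ (where $A=0$) with $\R^3/\Lambda$. Your tori differ from the paper's only by the linear map $\mathrm{diag}(e^{-\lambda A},e^{\lambda A},1)$, so nothing is lost for the diffeomorphism statement.
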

   \begin{proof}
       We carry out the proof for a generic leaf $L_{234} \in \mathcal{L}_{234}$. By the construction of the Nakamura manifold (see for example \cite{CT}) we fix matrices $M \in \SL(2;\mathbb{Z}), P \in \GL(2;\R)$ such that
       \[PMP^{-1} = \begin{pmatrix}
           e^{\lambda} & 0 \\
           0 & e^{-\lambda}\\
       \end{pmatrix}\]
       for $\lambda \in \R, \lambda \ne 0$. Now consider any point $p = [A:x_1:x_2:y:B:C] \in L_{234}$. Recall in particular that $A, B$ and $C$ are fixed. Define the matrix $\tilde{P} \in \GL(2;\R)$ by
       \[\tilde{P} = \begin{pmatrix}
           e^{A\lambda} & 0 \\
           0 & e^{-A\lambda}\\
       \end{pmatrix}P.\]
       We directly compute
       \begin{align*}
        \tilde{P}M\tilde{P^{-1}}&=\begin{pmatrix}
           e^{A\lambda} & 0 \\
           0 & e^{-A\lambda}\\
       \end{pmatrix}PMP^{-1}\begin{pmatrix}
           e^{-A\lambda} & 0 \\
           0 & e^{A\lambda}\\
       \end{pmatrix} = \\
       &=\begin{pmatrix}
           e^{A\lambda} & 0 \\
           0 & e^{-A\lambda}\\
       \end{pmatrix}\begin{pmatrix}
           e^{\lambda} & 0 \\
           0 & e^{-\lambda}\end{pmatrix}\begin{pmatrix}
           e^{-A\lambda} & 0 \\
           0 & e^{A\lambda}\\
       \end{pmatrix} = \begin{pmatrix}
           e^{\lambda} & 0 \\
           0 & e^{-\lambda}\end{pmatrix}
           \end{align*}
    therefore also for every $a \in \mathbb{Z}$ it holds
    \begin{equation}\label{MatrixTorus}
        \tilde{P}(M^a)\tilde{P}^{-1} = \begin{pmatrix}
        e^{\lambda a} & 0 \\
        0 & e^{-\lambda a}\\
    \end{pmatrix}
    \end{equation}
    with $M^a \in \SL(2;\mathbb{Z})$. Now define a lattice in $\R^3$ by $\Gamma := \tilde{P}\mathbb{Z} \oplus\tau\mathbb{Z}$, so that the quotient $\Gamma \backslash \R^3$ is a real 3-torus. We can now define a map 
    \begin{align*}
        \varphi: &L_{234} \longrightarrow \Gamma \backslash \R^3 \qquad q \longmapsto [e^{-\lambda A}x_1:e^{\lambda A}x_2:y]
    \end{align*}
    which if well-defined gives the diffeomorphism that we are looking for. Precisely, $\varphi$ is defined on every element of the equivalence class $q$ by the projection on the second, third and fourth components; we show that this is well-defined on the quotient in $\R^3$. Every element in the equivalence class of $q$ is of type
    \[(A + a, e^{\lambda a}x_1 + p_{11}a_{1} + p_{12}a_2,e^{-\lambda a}x_2 + p_{21}a_{1} + p_{22}a_2, y + b\tau, e^{\lambda a}B + p_{11}b_{1} + p_{12}b_2, e^{-\lambda a}C + p_{21}b_{1} + p_{22}b_2)\]
    and such a point gets sent to 
    \[(e^{-\lambda(A+a)}[e^{\lambda a}x_1 + p_{11}a_{1} + p_{12}a_2],e^{\lambda(A+a)}e[^{-\lambda a}x_2 + p_{21}a_{1} + p_{22}a_2], y + b\tau)\]
    which we re-write as
    \begin{equation}\label{EquivPoint}
    (e^{-\lambda A}x_1 + e^{-\lambda a}(e^{-\lambda A}p_{11}a_{1} + e^{-\lambda A}p_{12}a_2),(e^{\lambda A}x_2 + e^{\lambda a}(e^{\lambda A}p_{21}a_{1} + e^{\lambda A}p_{22}a_2), y + b\tau).
    \end{equation}
    Since \eqref{MatrixTorus} holds we can now see that
    \begin{align*}
        \begin{pmatrix}
            e^{-\lambda a} & 0 \\
            0 & e^{\lambda a}
        \end{pmatrix}\tilde{P}\begin{pmatrix}
            a_1 \\
            a_2\\
        \end{pmatrix} = \begin{pmatrix}
            e^{-\lambda a} & 0 \\
            0 & e^{\lambda a}
        \end{pmatrix}\begin{pmatrix}
            e^{\lambda a} & 0 \\
            0 & e^{-\lambda a}
        \end{pmatrix}\tilde{P}(M^a)^{-1}\begin{pmatrix}
            a_1 \\
            a_2\\
        \end{pmatrix} = \tilde{P}\begin{pmatrix}
            \tilde{a_1} \\
            \tilde{a_2}\\
        \end{pmatrix}
    \end{align*}
    for some $\tilde{a_1}, \tilde{a_2} \in \mathbb{Z}$. This means that point \eqref{EquivPoint} can be written as 
    \[(e^{-\lambda A}x_1 + \tilde{p_{11}}\tilde{a_1} + \tilde{p_{12}}\tilde{a_2},e^{\lambda A}x_2 + \tilde{p_{21}}\tilde{a_1} + \tilde{p_{22}}\tilde{a_2}, y + b\tau)\]
    and it is then equivalent to $(e^{-\lambda A}x_1, e^{\lambda A}x_2, y)$ in $\Gamma \backslash \R^3$.
   \end{proof}
 \subsubsection{Deformation theory}
	Now we study deformation theory, applying Theorem \ref{TeoremaEquazioniDeformazioni} to compute the dimension of the deformation space for the compact SLags in the foliations $\mathcal{L}_{123}$ and $\mathcal{L}_{156}$.\\[5pt]
    \underline{$ L_{123} \in \mathcal{L}_{123} $}. We can here proceed as in $\mathbb{I}_3$, with $ \{i,j,k\} = \{1,2,3\} $ and $ \{\tilde{i}, \tilde{j}, \tilde{k}\} = \{4,5,6\} $. We find the system:
	\begin{equation*}
		\begin{cases}
			E_1(\alpha_1) + E_2(\alpha_2) + E_3(\alpha_3) = 0 \\
			E_1(\alpha_2) - E_2(\alpha_1) +\lambda\alpha_2= 0 \\
			E_1(\alpha_3) - E_3(\alpha_1) - \lambda\alpha_3= 0 \\
			E_2(\alpha_3) - E_3(\alpha_2)  = 0 \\
		\end{cases}
	\end{equation*}
	where we have set $ \lambda_1 = \lambda = -\lambda_2 $. We start by applying $ E_1  $ to the first equation, $ E_2 $ to the second equation and $ E_3 $ to the third one, getting
	\begin{align*}
		E_{1}^{2}(\alpha_1) + E_1E_2(\alpha_2) + E_1E_3(\alpha_3) &= 0 \\
		E_2E_1(\alpha_2) - E_2^{2}(\alpha_1) + \lambda E_2(\alpha_2) &= 0\\
		E_3E_1(\alpha_3) - E_3^{2}(\alpha_1) - \lambda E_3(\alpha_3) &= 0.
	\end{align*}
	Subtracting the last two from the first one we get to
	\[ E_{1}^{2}(\alpha_1) + E_{2}^{2}(\alpha_1) + E_{3}^{2}(\alpha_1) = 0 \]
	which by ellipticity and compactness of $ N $ implies that $\alpha_1$ \textit{must be constant}. Therefore, the system reduces to
	\begin{equation*}
		\begin{cases}
			E_2(\alpha_2) + E_3(\alpha_3) = 0 \\
			E_1(\alpha_2) +\lambda \alpha_2= 0 \\
			E_1(\alpha_3) - \lambda\alpha_3= 0 \\
			E_2(\alpha_3) - E_3(\alpha_2)  = 0. \\
		\end{cases}
	\end{equation*}
	Now we apply $ E_2 $ and $ E_3 $ both to the first and last equations:
	\begin{equation*}
		\begin{cases}
			E_2^{2}(\alpha_2) + E_2E_3(\alpha_3) = 0 \\
			E_3E_2(\alpha_2) + E_3^{2}(\alpha_3) = 0 \\
			E_3E_2(\alpha_3) - E_3^{2}(\alpha_2)  = 0 \\
			E_2^{2}(\alpha_3) - E_2E_3(\alpha_2)  = 0 \\
		\end{cases}
	\end{equation*}
	and suitably subtracting them we get the two equations
	\[ E_2^{2}(\alpha_2) + E_3^{2}(\alpha_2) = 0 \qquad E_2^{2}(\alpha_3) + E_3^{2}(\alpha_3) = 0\]
	which imply for the same argument as before that $ \alpha_2 = \alpha_2(x) $ and $\alpha_3 = \alpha_3(x)$. So now we can directly find $ \alpha_2 $ and $ \alpha_3 $ from the second and third equations of the second system we found, which simply gives
	\[ \alpha_2 = c_2e^{-\lambda x} \qquad \alpha_3 = c_3e^{\lambda x} \]
	for some constants $ c_2, c_3 $. However, we know that all the functions $\alpha_i$ must be $ \mathbb{Z} $-periodic in $ x $ for the construction of $ N $: since $ \lambda \ne 0, 2k\pi $ ($\lambda$ is an eigenvalue of a matrix $M \in \SL(2,\mathbb{Z)}$) the only possibility is to have $ \alpha_2 = \alpha_3 = 0 $. Therefore, the deformation space is again 1-dimensional, but in this case it behaves \textit{accordingly} to $ b_1(L_{123}) $.\\[5pt]
	\underline{$L_{156} \in \mathcal{L}_{156}$}: applying Theorem \ref{TeoremaEquazioniDeformazioni}, in this case we get the system
    \[\begin{cases}
        E_1(\alpha_1) + E_5(\alpha_5) + E_6(\alpha_6) &= 0\\
        E_1(\alpha_5) - E_5(\alpha_1) -3\lambda\alpha_5 &= 0\\
        E_1(\alpha_6) - E_6(\alpha_1) +3\lambda\alpha_6 &= 0\\
        E_5(\alpha_6) - E_6(\alpha_5) &= 0\\
    \end{cases}\]
    which has to be addressed in a slightly different manner. The brackets of the distribution are similar to the previous case:
    \[[E_1, E_5] = \lambda E_5 \qquad [E_1, E_6] = -\lambda E_6 \qquad [E_5, E_6] = 0.\]
    We start by applying $E_5$ to the first equation, $E_1$ to the second and $E_6$ to the fourth, getting
    \[\begin{cases}
        E_5E_1(\alpha_1) + E_5^2(\alpha_5) + E_5E_6(\alpha_6) &= 0\\
        E_1^2(\alpha_5) - E_1E_5(\alpha_1) -3\lambda E_1(\alpha_5) &= 0\\
        E_6E_5(\alpha_6) - E_6^2(\alpha_5) &= 0.\\
    \end{cases}\]
    Adding the first two equations and subtracting the third one, we get 
    \begin{align*}
        0 &= E_1^2(\alpha_5) + E_5^2(\alpha_5) + E_6^2(\alpha_5) -\lambda E_5(\alpha_1) -3\lambda E_1(\alpha_5) =\\
        &= E_1^2(\alpha_5) + E_5^2(\alpha_5) + E_6^2(\alpha_5) -4\lambda E_1(\alpha_5) -3\lambda^2\alpha_5 
    \end{align*}
    which is an elliptic equation for $\alpha_5$. Using compactness, it must be $\alpha_5 = c_5 = \text{const.}$. The same reasoning for $\alpha_6$ yields $\alpha_6 = c_6 = \text{const.}$. Therefore, the initial system reduces to
    \[\begin{cases}
        E_1(\alpha_1) &= 0\\
        E_5(\alpha_1) +3\lambda c_5 &= 0\\
        E_6(\alpha_1) -3\lambda c_6 &= 0.\\
    \end{cases}\]
    Applying $E_1$ to the first equation, $E_5$ to the second and $E_6$ to the third one we get the elliptic equation 
    \[(E_1^2 + E_5^2 + E_6^2)(\alpha_1) = 0\]
    which gives us $\alpha_1 = \text{const.}$. Finally, being $\alpha_1$ constant the last system shows that it must be $\alpha_5 = \alpha_6 = 0$. Therefore, the space of first order deformations is again 1-dimensional and behaves \textit{accordingly} to $b_1(L_{156})$.
\begin{osservazione}
        The dimension of the space of first order deformations perfectly agrees with the fact that, in each case, we have explicitly constructed a 1-dimensional family of compact SLags inside $\mathcal{L}_{123}$ and $\mathcal{L}_{156}$.  Therefore, the deformation theory is unobstructed.
    \end{osservazione}

\section{Non-K\"ahler mirror symmetry: The completely solvable Nakamura manifold}\label{MirrorSymmCSNakamura}
Consider the 6-dimensional Nakamura manifold $N$ as in the previous section. We use the foliation $\mathcal{L}_{234}$ to apply the ideas of Strominger-Yau-Zaslow \cite{SYZ} mirror symmetry and actually compute a mirror for $N$, following the proposal of Lau-Tseng-Yau \cite{LTY}. Recall that by Proposition \ref{ToriLeaves} every leaf of the special Lagrangian foliation $\mathcal{L}_{234}$ is diffeomorphic to a real 3-torus. Let us start with the following lemma.
\begin{lemma}
    Let $\sigma \in \mathcal{L}_{156}$ be the closed special Lagrangian given by $A = B = C = 0$ (see Proposition \ref{Topology123And156}). If $L \in \mathcal{L}_{234}$ is any special Lagrangian 3-torus then $L \cap \sigma$ is a point. That is, $\sigma$ is a section of the special Lagrangian foliation.
\end{lemma}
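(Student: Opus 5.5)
Work on the universal cover $G\cong\R^6$ with coordinates $(x,x_1,x_2,y,y_1,y_2)$, as in the proofs of Proposition \ref{Topology123And156} and Proposition \ref{ToriLeaves}. The section $\sigma$ is the image in $N$ of $\tilde\sigma=\{(x,0,0,0,y_1,y_2)\}$. A leaf $L\in\mathcal{L}_{234}$ is the image of $\tilde L=\{(A,x_1,x_2,y,B,C)\}$, and its full preimage in $G$ is the union of the $\Gamma_{P,\tau}$-translates of $\tilde L$. Since both sets are unions of coordinate slices, the intersection can be computed by hand. The plan is to show that $\pi^{-1}(L)\cap\tilde\sigma$ is nonempty and is a single $\Gamma$-orbit modulo the stabiliser of $\tilde\sigma$.

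\textbf{Existence.} $\tilde L\cap\tilde\sigma$ is the single point $(A,0,0,0,B,C)$: the slice conditions $x=A$, $y_1=B$, $y_2=C$ and $x_1=x_2=y=0$ are complementary. Hence $L\cap\sigma\neq\emptyset$.

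\textbf{Uniqueness.} Suppose $\gamma=(a,a_1,a_2,b\tau,b_1,b_2)\in\Gamma_{P,\tau}$ maps a point of $\tilde\sigma$ to a point of $\tilde L$. Using the explicit action written in the proof of Proposition \ref{ToriLeaves}, I will read off conditions coordinate by coordinate.
\begin{itemize}
\item The $x_1,x_2$ components of a point of $\tilde\sigma$ are $0$, so the image has $(x_1,x_2)$-part $P\binom{a_1}{a_2}$. This is unconstrained, since $\tilde L$ allows arbitrary $x_1,x_2$.
\item The $y$-coordinate becomes $b\tau$, which is also unconstrained.
\item The $x$-coordinate gives $x+a=A$.
\item The $(y_1,y_2)$-coordinates give $\mathrm{diag}(e^{\lambda a},e^{-\lambda a})\binom{y_1}{y_2}+P\binom{b_1}{b_2}=\binom{B}{C}$.
\end{itemize}
So the preimage of $L$ meets $\tilde\sigma$ exactly in the points $(A-a,0,0,0,y_1,y_2)$ with
\[
\binom{y_1}{y_2}=\mathrm{diag}(e^{-\lambda a},e^{\lambda a})\left[\binom{B}{C}-P\binom{b_1}{b_2}\right],\qquad a,b_1,b_2\in\Z .
\]

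It remains to check that all these points project to the same point of $N$, i.e.\ that they are identified by elements of $\Gamma$ preserving $\tilde\sigma$. The stabiliser of $\tilde\sigma$ is the subgroup $\{a_1=a_2=b=0\}$, which acts on $\tilde\sigma$ by $(x,y_1,y_2)\mapsto(x+a,\ \mathrm{diag}(e^{\lambda a},e^{-\lambda a})(y_1,y_2)^T+P(b_1,b_2)^T)$. Applying the element with parameters $(a,b_1,b_2)$ to the point of parameters $(a,b_1,b_2)$ returns $(A,0,0,0,B,C)$. Hence all these points are identified in $\sigma$, and $L\cap\sigma$ is exactly one point.

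\textbf{Main obstacle.} The delicate step is the bookkeeping of the semidirect product action. One must check that the $(x_1,x_2)$- and $(y_1,y_2)$-components transform independently, with the same diagonal factor and the same $P$. This is what makes the points found above lie in a single stabiliser orbit, rather than being genuinely distinct intersection points as would happen for a dense leaf.

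As a consistency check, the resulting map $N\to\sigma$, $L\mapsto L\cap\sigma$, is the smooth projection $[x,x_1,x_2,y,y_1,y_2]\mapsto[x,0,0,0,y_1,y_2]$. I would verify that this projection is well defined on $N$ by the same action formula. This exhibits $\sigma$ as the base $B$ of the $T^3$-fibration, with $\sigma$ as a section.
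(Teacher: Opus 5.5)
Your proposal is correct and takes the same approach as the paper. The paper simply equates $[A,x_1,x_2,y,B,C]=[x,0,0,0,y_1,y_2]$ in $N$ and asserts that this has a unique solution. You carry out the lattice-action bookkeeping the paper leaves implicit: you lift to $G$, show the solutions in $\tilde\sigma$ are the points $(A-a,0,0,0,\mathrm{diag}(e^{-\lambda a},e^{\lambda a})[(B,C)^T-P(b_1,b_2)^T])$, and check that these form a single orbit of the stabiliser $\{a_1=a_2=b=0\}$ of $\tilde\sigma$.
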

\begin{proof}
    Let $L = L_{ABC} \in \mathcal{L}_{234} $. Then a point $p$ is in $L \cap \sigma$ if and only if 
    \[p = [A, x_1,x_2,B,C] = [x, 0, 0, 0, y_1, y_2]\]
and there is a unique solution of this equation.
\end{proof}
This means that we get a special Lagrangian torus fibration over $N$
\[T \hookrightarrow N \rightarrow B\]
where $B = \sigma \in \mathcal{L}_{156}$ and the tori fiber are the tori of $\mathcal{L}_{234}$, which we recall to be diffeomorphic to
\[ T :=\R^3 / \Lambda \qquad \text{ where } \Lambda = P\mathbb{Z}^2 \oplus \tau\mathbb{Z} \]
(note that here on the fibers over $\sigma$ we have $A = 0$ so that the matrix $\tilde{P}$ in the proof of Proposition \ref{ToriLeaves} coincides with $P$).\\[5pt]
In order to now apply the idea of SYZ mirror symmetry we observe that the dual tori $T^\vee$ of $T$ are given by 
\[ T^\vee :=\R^3 / \Lambda^\vee \qquad \Lambda^\vee = (P^{-1})^T\mathbb{Z}^2 \oplus \tau^{-1}\mathbb{Z}. \]
We now use all this information to compute the mirror of the Nakamura manifold. Before proceeding, we check that $(N, \omega, \Omega)$ satisfy the type IIB equations \cite[equations (2.1) to (2.4)]{LTY}. First two equations of the system have already been checked. For the third, we have
\[\omega^3 = -6\theta^{123456} \qquad \Omega \wedge \bar\Omega = 8i\theta^{123456}\]
so that $F = 8$. Finally, we get 
\[ 2i\del\delbar\omega =4\lambda^2(\theta^{1245} - \theta^{1346}) \implies 2i\del\delbar(\frac{1}{8}\omega) = \frac{1}{2}\lambda^2(\theta^{1245} - \theta^{1346}) =: \rho_B.\]
Let us start by writing the fibration $ N \rightarrow B$ more explicitly. In the coordinates, the map is given by
\[(x, x_1, x_2, y, y_2, y_1) \longmapsto (x, y_1, y_2):= (r_0, r_1, r_2)\]
and (passing of course to the quotient) the fibers of the map are diffeomorphic to the real 3-tori $T = \R^3 / \Lambda$ previously defined. We address the construction of the mirror in several steps. 
\subsection{Identification with $TB/\Lambda$}
We start by identifying $N$ holomorphically with $TB/\Lambda_{TB}$, where $\Lambda_{TB}$ is a lattice bundle which we will determine later. We know that $B = \Gamma_B \backslash \R^3$, where $\Gamma_B$ is the lattice $\mathbb{Z} \oplus P\mathbb{Z}^2$ acting via
\[(r_0, r_1, r_2) \longmapsto (r_0 + a, e^{\lambda a}r_1 + p_{11}b_1 + p_{12}b_2, e^{-\lambda a}r_2 + p_{21}b_1 + p_{22}b_2). \]
This is exactly the affine transformation 
\[ \begin{pmatrix}
    r_0 \\
    r_1 \\
    r_2 
\end{pmatrix} = \begin{pmatrix}
    1 & 0 & 0 \\
    0 & e^{\lambda a} & 0\\
    0 & 0 & e^{-\lambda a}
\end{pmatrix}\begin{pmatrix}
    r_0 \\
    r_1 \\
    r_2 
\end{pmatrix} + \begin{pmatrix}
    a \\
    P\begin{pmatrix}
    b_1 \\
    b_2 \\
\end{pmatrix}
\end{pmatrix} \]
therefore $B$ is endowed with a natural affine structure, with affine local coordinates $(r_0, r_1, r_2)$. We denote by $(\alpha_0, \alpha_1, \alpha_2)$ the fiber coordinates on $TB$, namely 
\[(\alpha_0, \alpha_1, \alpha_2) \longmapsto \alpha_0\dfrac{\del}{\del r_0} + \alpha_1\dfrac{\del}{\del r_1} +\alpha_2\dfrac{\del}{\del r_2}.\]
Therefore, the tangent space to $TB$ is spanned by the vector fields 
\[\bigg\{\dfrac{\del}{\del r_i}, \dfrac{\del}{\del \alpha_i}\bigg\}_{i = 1,2,3}\]
and there is a natural almost complex structure on $TB$ defined by setting
\begin{equation}\label{cmplxstrTB}
    J\bigg(\dfrac{\del}{\del r_i}\bigg) = \dfrac{\del}{\del \alpha_i} \qquad J\bigg(\dfrac{\del}{\del \alpha_i}\bigg) = -\dfrac{\del}{\del r_i}
\end{equation}
and, since $B$ is affine, this almost complex structure is integrable, with holomorphic coordinates given by $r_j + i\alpha_j$, $j = 1,2,3$. The last thing we need to show is the construction of the lattice bundle $\Lambda_{TB}$. Observe that $TB \simeq \R^3 \times B$; in fact, $B$ is parallelizable, with a basis of global non-vanishing vector fields given by
\begin{equation}\label{GlobalVFonTB}
    E_0 = \dfrac{\del}{\del r_0} \qquad E_1 = e^{\lambda r_0}\dfrac{\del}{\del r_1} \qquad E_2 = e^{-\lambda r_0}\dfrac{\del}{\del r_2}.
\end{equation}
\begin{lemma}
    There exist a non-trivial well-defined lattice bundle $\Lambda_{TB} \subset TB$ defined by 
    \[\Lambda_{TB} := \text{span}_{\mathbb{Z}}\{E_0, p_{11}e^{-\lambda r_0}E_1 + p_{21}e^{\lambda r_0}E_2, p_{12}e^{-\lambda r_0}E_1 + p_{22}e^{\lambda r_0}E_2 \}\]
    where $E_0, E_1, E_2$ are the vector fields \eqref{GlobalVFonTB}. Moreover, $\Lambda_{TB} \simeq \mathbb{Z} \oplus P\mathbb{Z}^2$ in the affine coordinates $(\alpha_0, \alpha_1, \alpha_2)$.
\end{lemma}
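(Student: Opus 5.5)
The plan is to work on the universal cover $\R^3$ of $B$, define $\Lambda_{TB}$ there as a constant-coefficient lattice bundle, and then check that it is preserved by the deck group $\Gamma_B = \mathbb{Z}\oplus P\mathbb{Z}^2$. Since this action is by affine maps, $\Lambda_{TB}$ will descend to a lattice bundle on $TB$. The first step is to rewrite the three generators in the affine frame $\{\del/\del r_0,\del/\del r_1,\del/\del r_2\}$ using \eqref{GlobalVFonTB}. The exponentials cancel:
\[
p_{11}e^{-\lambda r_0}E_1 + p_{21}e^{\lambda r_0}E_2 = p_{11}\dfrac{\del}{\del r_1} + p_{21}\dfrac{\del}{\del r_2}, \qquad p_{12}e^{-\lambda r_0}E_1 + p_{22}e^{\lambda r_0}E_2 = p_{12}\dfrac{\del}{\del r_1} + p_{22}\dfrac{\del}{\del r_2},
\]
and $E_0=\del/\del r_0$. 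So in the fiber coordinates $(\alpha_0,\alpha_1,\alpha_2)$ the span is $\mathbb{Z}\oplus P\mathbb{Z}^2$ at every point, with the columns of $P$ as generators. This is the "moreover" part of the statement. Because $P$ is invertible, the three vectors are linearly independent at every point, so on $\R^3$ they span a rank-3 lattice in each tangent space.

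The second step, which carries the content, is to show that this lattice is invariant under $\Gamma_B$. A generator $(a,b_1,b_2)$ acts by the affine map with linear part $\mathrm{diag}(1,e^{\lambda a},e^{-\lambda a}) = \mathrm{diag}(1,D^a)$. Its differential therefore sends $\del/\del r_0$ to itself and sends the vector with $(\del/\del r_1,\del/\del r_2)$-components $Pv$, $v\in\mathbb{Z}^2$, to the vector with components $D^aPv$. Using $PMP^{-1}=D$ we get $D^aP = PM^a$, hence $D^aPv = P(M^av)$. Since $M^a\in\SL(2,\mathbb{Z})$, the map $v\mapsto M^a v$ is a bijection of $\mathbb{Z}^2$, so $D^aP\mathbb{Z}^2 = P\mathbb{Z}^2$. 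The translation part acts trivially on the fibers. Consequently $\Lambda_{TB}$ is invariant and descends to a well-defined lattice bundle on $TB$ over $B=\Gamma_B\backslash\R^3$. I expect this to be the main point to handle carefully: the two individual generators are \emph{not} invariant sections, only their $\mathbb{Z}$-span is.

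For non-triviality I will use the monodromy. Going once around the $r_0$-circle ($a=1$) acts on the fiber $\Lambda_{TB}\cong\mathbb{Z}\oplus\mathbb{Z}^2$ by $\mathrm{diag}(1,M)$ in the basis given by the columns of $P$. Since $M\neq\pm I$ has irrational eigenvalues (as shown in the proof of Lemma \ref{LemmaNumTh}), this monodromy is nontrivial. Hence the lattice bundle admits no global frame of invariant sections, that is, it is not the trivial $\mathbb{Z}^3$-bundle. Finally, I will note that all transition maps lie in $\GL(3,\mathbb{Z})$ in the lattice basis, with determinant $1$. This is compatible with the affine structure of $B$ and with the complex structure \eqref{cmplxstrTB}, so $TB/\Lambda_{TB}$ is a complex manifold, as needed for the subsequent identification.
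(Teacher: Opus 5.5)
Your proposal is correct and follows essentially the paper's argument: the paper checks that the $\mathbb{Z}$-span is unchanged under the deck generator $r_0\mapsto r_0-1$, working with coefficients relative to the invariant frame $E_0,E_1,E_2$, and concludes from $PM=DP$ with $M\in\SL(2,\mathbb{Z})$. That is the same as your identity $D^aP\mathbb{Z}^2=PM^a\mathbb{Z}^2=P\mathbb{Z}^2$, which you phrase on the universal cover in the affine frame. Your monodromy argument for non-triviality (monodromy $\mathrm{diag}(1,M)$ with $M\neq I$) supplies a justification that the paper only asserts \emph{by construction}.
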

\begin{proof}
    If we show well-definition, then by construction $\Lambda_{TB}$ is a non-trivial lattice bundle in $TB$ and in the affine coordinates it is, again by construction, given by $\mathbb{Z} \oplus P\mathbb{Z}^2$.\\[5pt]
    We first observe that it depends only on $r_0$, which is defined up to an integer translation. To show that $\Lambda_{TB}$ is well-defined it suffices to show that it is invariant under the transformation $r_0 \mapsto r_0 -1$. Under this transformation we get
    \[\Lambda_{TB} \mapsto \Lambda'_{TB} = \text{span}_{\mathbb{Z}}\{E_0, p_{11}e^{\lambda}e^{-\lambda r_0}E_1 + p_{21}e^{-\lambda}e^{\lambda r_0}E_2, p_{12}e^{\lambda}e^{-\lambda r_0}E_1 + p_{22}e^{-\lambda}e^{\lambda r_0}E_2 \}.\]
    Now, since
    \[PMP^{-1} = \begin{pmatrix}
        e^\lambda & 0\\
        0 & e^{-\lambda}
    \end{pmatrix}\]
    we have that
    \[\begin{pmatrix}
        e^\lambda p_{11} & e^\lambda p_{12} \\
        e^{-\lambda} p_{21} & e^{-\lambda} p_{22}\\
    \end{pmatrix} = PM = \begin{pmatrix}
        p_{11}m_{11} + p_{12}m_{21} & p_{11}m_{12} + p_{12}m_{22}\\
        p_{21}m_{11} + p_{22}m_{21} & p_{21}m_{12} + p_{22}m_{22}\\
    \end{pmatrix}\]
    and this tells us that
    \begin{align*}
    \Lambda'_{TB} = \text{span}_{\mathbb{Z}}\{E_0,(p_{11}m_{11} + p_{12}m_{21})e^{-\lambda r_0}E_1 + (p_{21}m_{11} + p_{22}m_{21})e^{\lambda r_0}E_2,\\
    (p_{11}m_{12} + p_{12}m_{22})e^{-\lambda r_0}E_1 + (p_{21}m_{12} + p_{22}m_{22})e^{\lambda r_0}E_2 \}
    \end{align*}
    which is the same span over $\mathbb{Z}$ as $\Lambda_{TB}$.
\end{proof}
Putting together all the previous computations we have proved the following. We denote $\Lambda_{TB}=\Lambda$ for simplicity in the following computations.
\begin{lemma}\label{IdentNwithTB}
    We can identify $N$ with $TB/\Lambda$ (endowed with the complex structure \eqref{cmplxstrTB}) via the biholomorphic map $TB/\Lambda \rightarrow N$ given by
    \[r_0 \mapsto x \qquad r_1 \mapsto y_1 \qquad r_2 \mapsto y_2\]
    \[\alpha_0 \mapsto y \qquad \alpha_1 \mapsto -x_1 \qquad \alpha_2 \mapsto -x_2.\]
\end{lemma}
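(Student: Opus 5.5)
We will check three things: that the map is well defined as a map of quotients, that it is a diffeomorphism, and that it is holomorphic for \eqref{cmplxstrTB} and the structure of $N$. Throughout we write $\Phi$ for the lift $\R^6\to\R^6$, $(r_0,r_1,r_2,\alpha_0,\alpha_1,\alpha_2)\mapsto (x,x_1,x_2,y,y_1,y_2)=(r_0,-\alpha_1,-\alpha_2,\alpha_0,r_1,r_2)$, where $w=x+iy$ and $z_j=x_j+iy_j$. Since $\Phi$ is a linear isomorphism, $TB/\Lambda\to N$ will be a diffeomorphism once we know that $\Phi$ intertwines the two group actions, so the real work is comparing the equivalence relations.

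First, the relation defining $TB/\Lambda$ is generated by two kinds of moves. The deck transformations of $B=\Gamma_B\backslash\R^3$ act on $(r_0,r_1,r_2)$ by $r_0\mapsto r_0+a$ and $(r_1,r_2)\mapsto \mathrm{diag}(e^{\lambda a},e^{-\lambda a})(r_1,r_2)+P(b_1,b_2)^T$. Their differential acts on the fiber coordinates by $(\alpha_1,\alpha_2)\mapsto \mathrm{diag}(e^{\lambda a},e^{-\lambda a})(\alpha_1,\alpha_2)$ and fixes $\alpha_0$. The other move is translation by $\Lambda$, which in affine fiber coordinates is $\mathbb{Z}\oplus P\mathbb{Z}^2$ by the preceding lemma. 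Second, the action of $\Gamma_{P,\tau}$ on $N$, as written in the proof of Proposition \ref{ToriLeaves}, is $x\mapsto x+a$, $y\mapsto y+b\tau$, $(x_1,x_2)\mapsto \mathrm{diag}(e^{\lambda a},e^{-\lambda a})(x_1,x_2)+P(a_1,a_2)^T$, and the same with $(y_1,y_2)$ and $(b_1,b_2)$. Under $\Phi$ the deck transformations of $B$ correspond exactly to the $(a,b_1,b_2)$ part: the scaling of $(\alpha_1,\alpha_2)$ matches the scaling of $(-x_1,-x_2)$, and translating $(r_1,r_2)$ matches translating $(y_1,y_2)$. The $P\mathbb{Z}^2$ part of $\Lambda$ matches the $(a_1,a_2)$-translations, because $-P\mathbb{Z}^2=P\mathbb{Z}^2$, so the sign in $\alpha_j\mapsto -x_j$ is harmless. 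The $\mathbb{Z}$ part in $\alpha_0$ should match $y\mapsto y+b\tau$.

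The main obstacle is that last matching. I expect the period in the $\alpha_0$-direction has to be $\tau$ rather than $1$: either $E_0$ in $\Lambda_{TB}$ is replaced by $\tau E_0$, or we normalise $\tau=1$, or we rescale $\alpha_0\mapsto \tau^{-1}y$. I would fix this by declaring $\Lambda=\tau\mathbb{Z}\oplus P\mathbb{Z}^2$ in affine coordinates, consistent with the fiber lattice $\Lambda=P\mathbb{Z}^2\oplus\tau\mathbb{Z}$ used just before. A related point needs checking: the translations by $\Lambda$ are by constants in affine coordinates, whereas the $\Gamma$-action on $N$ first scales $(x_1,x_2)$ and then translates. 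The two generated groups still agree, because conjugating translations by the deck transformations rescales $P\mathbb{Z}^2$ by $\mathrm{diag}(e^{\lambda a},e^{-\lambda a})$. This preserves $P\mathbb{Z}^2$ since $\mathrm{diag}(e^{\lambda a},e^{-\lambda a})P=PM^a$, which is the same computation as in the previous lemma.

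Finally, holomorphicity. On $TB$ the holomorphic coordinates are $r_j+i\alpha_j$. Under $\Phi$ we have $r_0+i\alpha_0=x+iy=w$, and for $j=1,2$, $r_j+i\alpha_j=y_j-ix_j=-i(x_j+iy_j)=-iz_j$. So $\Phi^*$ sends the holomorphic coordinates $(w,z_1,z_2)$ of $\C\ltimes_\rho\C^2$ to holomorphic functions, since $\Phi^*z_j=i(r_j+i\alpha_j)$. Hence $\Phi$ is a biholomorphism of the covers commuting with the actions, and it descends to the claimed biholomorphism $TB/\Lambda\to N$. It is compatible with the co-frame $\varphi^0,\varphi^1,\varphi^2$ up to the nowhere-vanishing factors $e^{-\frac12\lambda_j(w+\bar w)}$ and $-i$.
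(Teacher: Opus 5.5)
Your proof is correct and follows the paper's route. The paper proves this lemma only by assembling three earlier facts: the affine structure on $B=\Gamma_B\backslash\R^3$, the integrable complex structure with holomorphic coordinates $r_j+i\alpha_j$, and the invariance of the lattice bundle under the monodromy via $PM=DP$. Your comparison of the two group actions and your computation $\Phi^*w=r_0+i\alpha_0$, $\Phi^*z_j=i(r_j+i\alpha_j)$ make exactly these facts explicit.

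You are also right that the $\alpha_0$-period must be $\tau$, i.e.\ $\Lambda=\tau\Z\oplus P\Z^2$. This matches the fiber torus $\R^3/(P\Z^2\oplus\tau\Z)$ and the dual lattice $(P^{-1})^T\Z^2\oplus\tau^{-1}\Z$ used afterwards, whereas the paper's stated $\Lambda_{TB}\simeq\Z\oplus P\Z^2$ only works for $\tau=1$. Of the three fixes you list, the rescaling $\alpha_0\mapsto\tau^{-1}y$ would break holomorphicity for $\tau\neq 1$, because $r_0$ has period $1$ fixed by the base. So the lattice change you settle on is the correct one.
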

Before moving to the symplectic side, we need to look at a holomorphic 3-form $\Omega$ on $N$. We take $\Omega$ to be, in the notations of Section \ref{Examples},
\[\check{\Omega} = i\varphi^{012}= i(dx + idy)\wedge(dx_1 + idy_1) \wedge (dx_2 + i dy_2) \]
and by the identification of Lemma \ref{IdentNwithTB} we can write it as
\begin{align*}
    \check{\Omega} &= i(dr_0 + id\alpha_0)\wedge(-d\alpha_1 + idr_1) \wedge (-d\alpha_2 + i dr_2) = \\
    & = (-d\alpha_0 + idr_0) \wedge(-d\alpha_1 + idr_1)\wedge (-d\alpha_2 + i dr_2).
\end{align*}
Therefore, if we perform the coordinate change 
\[\check{\theta}_i := -\alpha_i\]
we find 
\[\check{\Omega} = (d\check{\theta}_0 + idr_0) \wedge(d\check{\theta}_1 + idr_1)\wedge (d\check{\theta}_2 + i dr_2).\]
\begin{osservazione}
    The change of coordinates to $\check{\theta}_i$ does not affect the lattice neither the affine structure (as is does not involve the $r_i$ coordinates). Moreover, we can still identify holomorphically $N$ and $TB/\Lambda $.
\end{osservazione}
\subsection{The dual manifold}
To proceed with the construction we need to look at the dual manifold 
\[T^*B / \Lambda^*\]
where $\Lambda^*$ is the dual lattice to $\Lambda$ and it is therefore given by
\begin{equation}\label{DualLattice}
    \Lambda^* = (P^{-1})^T\mathbb{Z}^2 \oplus \tau^{-1}\mathbb{Z}.
\end{equation}
Being $\Lambda$ global and well-defined, $\Lambda^*$ inherits the same properties and so $T^*B / \Lambda^*$ a well-defined compact 6-dimensional manifold, which is endowed with a canonical symplectic structure constructed in the following way.\\[5pt]
Denote by $\theta_i$ the fiber coordinates on $T^*B / \Lambda^*$, dual to $\check{\theta}_i$. The basis coordinates are the same and therefore the canonical symplectic structure $\omega$ is given by
\begin{equation}\label{SymplecticStrMirror}
    \omega = d\theta_0 \wedge dr_0 + d\theta_1 \wedge dr_1 + d\theta_2 \wedge dr_2.
\end{equation}

\vspace{0.7cm}
The goal for finding the mirror of the Nakamura manifold is to identify a manifold $(M, \omega, \Omega)$ of type IIA, which can be simplectically identified with $T^*B/\Lambda^*$ and such that $\Omega$ is the \textit{Fourier-Mukai transform} of $\check{\omega}$, where $\check{\omega}$ is the balanced metric on $N$, which we recall is given by (in the new coordinates on $TB/\Lambda$)
\begin{equation}\label{BalancedMetricMirror}
    \check{\omega} = d\check{\theta}_0 \wedge dr_0 + e^{-2\lambda r_0}d\check{\theta}_1 \wedge dr_1 + e^{2\lambda r_0}d\check{\theta}_2 \wedge dr_2.
\end{equation}
We therefore compute the Fourier-Mukai transform of $e^{2\check{\omega}}$. 
\begin{lemma}\label{FMTransform}
    \begin{equation}
        \text{FT}(e^{2\check{\omega}}) = -\tau|\det P|(d\theta_0 + idr_0)\wedge (e^{\lambda r_0}d\theta_1 + ie^{-\lambda r_0}dr_1) \wedge (e^{-\lambda r_0}d\theta_2 + ie^{\lambda r_0}dr_2).
    \end{equation}
\end{lemma}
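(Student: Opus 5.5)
The plan is to compute the Fourier--Mukai transform \eqref{FMTrasnform} directly, using the fact that $\check{\omega}$ in \eqref{BalancedMetricMirror} is diagonal in the three pairs $(\check{\theta}_i, r_i)$. Write $\check{\omega} = \sum_{i=0}^{2} c_i\, d\check{\theta}_i\wedge dr_i$ with $c_0 = 1$, $c_1 = e^{-2\lambda r_0}$ and $c_2 = e^{2\lambda r_0}$. All coefficients depend only on the base coordinate $r_0$, so $\check{\omega}\in\Omega^{1,1}_B(\check{X};\C)$ and the transform is defined. Since the summands are even forms, they commute, and
\[
e^{2\check{\omega}} = \prod_{i=0}^{2}\bigl(1 + 2c_i\, d\check{\theta}_i\wedge dr_i\bigr).
\]
The kernel $e^{\sum d\check{\theta}_i\wedge d\theta_i}$ also factors as $\prod_i (1 + d\check{\theta}_i\wedge d\theta_i)$. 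The integrand therefore splits as a product of three commuting "one-dimensional" factors, and the fibre integration over the dual torus $T^\vee$ factorizes, up to a global sign coming from reordering the fibre differentials.

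For a single index $i$, I will apply the polarization switch $\mathcal{P}$ of \cite[Definition 4.2]{LTY}. On a factor of the shape $(1 + 2c_i\, d\check{\theta}_i\wedge dr_i)$ I expect it to produce a factor like $(1 + 2 i c_i\, d\check{\theta}_i\wedge dr_i)$, or an analogous rescaling of the $dr_i$ legs; I will check the exact normalisation in \cite{LTY}. I then multiply by $(1 + d\check{\theta}_i\wedge d\theta_i)$ and keep only the terms that contain exactly one $d\check{\theta}_i$, since $\check{\pi}_*$ picks out the top-degree fibre component. This leaves a term proportional to $d\check{\theta}_i\wedge(d\theta_i + 2 i c_i\, dr_i)$, up to sign. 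Integrating over the $\check{\theta}_i$-circle contributes the period of the dual lattice. Taking the product over $i=0,1,2$ gives
\[
\mathrm{covol}(\Lambda)\prod_i (d\theta_i + 2 i c_i\, dr_i).
\]
Here $\mathrm{covol}(\Lambda) = \tau|\det P|$, using $\Lambda = P\mathbb{Z}^2\oplus\tau\mathbb{Z}$ from the preceding lemma and \eqref{DualLattice}; an overall sign $-1$ comes from moving the three $d\check{\theta}$'s to the front.

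Next I have to match the coefficients with the stated formula. I will rescale each factor, using $(d\theta_1 + i e^{-2\lambda r_0} dr_1) = e^{-\lambda r_0}(e^{\lambda r_0} d\theta_1 + i e^{-\lambda r_0} dr_1)$ and similarly for the index $2$. The two prefactors $e^{-\lambda r_0}$ and $e^{\lambda r_0}$ multiply to $1$ because $\lambda_1 + \lambda_2 = 0$, which produces exactly the balanced shape in the statement. For this to work, the factor $2$ in $e^{2\check{\omega}}$ must be absorbed by the normalisation of $\mathcal{P}$ and of the kernel in \cite{LTY}, so that the effective coefficient is $c_i$ and not $2c_i$.

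I expect the main obstacle to be bookkeeping of conventions rather than anything conceptual. Three things have to come out right: the precise action of $\mathcal{P}$ (which factors of $i$ or $2$ it attaches to the $dr$ legs), the sign and orientation convention for $\check{\pi}_*$ that produces the $-1$, and whether the fibre volume is $\tau|\det P|$ or its inverse, i.e. which of $\Lambda$, $\Lambda^*$ is the torus being integrated over. I will settle these first by testing the recipe on the flat case $\lambda = 0$, $P = I$, $\tau = 1$. In that case the answer should reduce to the standard $\pm(d\theta_0 + i dr_0)\wedge(d\theta_1 + i dr_1)\wedge(d\theta_2 + i dr_2)$, and I will fix the normalisations from that check.
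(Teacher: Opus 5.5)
Your proposal is correct and follows essentially the same route as the paper: apply the polarization switch to get $\mathcal{P}\cdot e^{2\check{\omega}} = e^{i\check{\omega}}$ (this is exactly the normalisation you need, so the coefficient is $c_i$ rather than $2c_i$), combine with the kernel into $\exp\bigl(\sum_i d\check{\theta}_i\wedge(d\theta_i + i c_i\, dr_i)\bigr)$, keep the $d\check{\theta}_0\wedge d\check{\theta}_1\wedge d\check{\theta}_2$ term (sign $-1$ from reordering), integrate over the fibres of $\check{\pi}$, which have volume $\tau|\det P|$, and rescale using $e^{-\lambda r_0}e^{\lambda r_0}=1$. One small wording fix: the fibres of $\check{\pi}$ are the $\check{\theta}$-tori $\mathbb{R}^3/\Lambda$ of $TB/\Lambda$, not $T^\vee$, which is why the factor is $\mathrm{covol}(\Lambda)$ rather than its inverse.
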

\begin{proof}
    By definition we need to compute 
    \[\text{FT}(e^{2\check{\omega}}) = \check{\pi}_*(\pi^*(\mathcal{P}\cdot e^{2\check{\omega}} \wedge e^{\sum d\check{\theta_i} \wedge d\theta_i}))\]
    where the maps $\pi, \check{\pi}$ are given by the fiber product
    \[\pi : (TB/\Lambda) \times (T^*B/\Lambda^*) \rightarrow TB/\Lambda\]
    \[\check{\pi} : (TB/\Lambda) \times (T^*B/\Lambda^*) \rightarrow T^*B/\Lambda^*\]
    and $\mathcal{P}$ is the polarization operator. Given the expression for $\check{\omega}$, the polarization operators reads
    \[\mathcal{P}\cdot e^{2\check{\omega}} = e^{i\check{\omega}}\]
    so that
    \begin{align*}
        &\pi^*(\mathcal{P}\cdot e^{2\check{\omega}} \wedge e^{\sum d\check{\theta_i} \wedge d\theta_i}) = \pi^*\exp(i\check{\omega} + \sum d\check{\theta_i} \wedge d\theta_i) \\
        &= \exp(d\check{\theta_0} \wedge (d\theta_0 + i dr_0) + \check{\theta_1} \wedge (d\theta_1 + ie^{-2\lambda r_0} dr_1)+  \check{\theta_2} \wedge (d\theta_2 + ie^{2\lambda r_0} dr_2).
    \end{align*}
    Now we can finish by expanding the exponential and integrating along the fibers of $\check{\pi}$. The only term we need to care about is the one involving $d\check{\theta_0} \wedge d\check{\theta}_1 \wedge d\check{\theta}_2$, which finally gives
    \[\text{FT}(e^{2\check{\omega}}) = -\tau|\det P|(d\theta_0 + idr_0)\wedge (e^{\lambda r_0}d\theta_1 + ie^{-\lambda r_0}dr_1) \wedge (e^{-\lambda r_0}d\theta_2 + ie^{\lambda r_0}dr_2)\]  
    where $\tau\det P$ is the volume of the fibers.
\end{proof}
\subsection{The mirror: the symplectic side}
With the Fourier-Mukai transform in hand, we can construct a manifold $M$ which will be the mirror of the the Nakamura manifold $N$. More precisely, $M$ will still be a family of Nakamura manifolds, constructed via a different lattice and identified simplectically with $T^*B/\Lambda^*$.\\[5pt]
Consider a matrix $M \in \SL(2; \mathbb{Z})$ as in the construction of the Nakamura manifold in Section \ref{Examples}. We start by observing that since $PMP^{-1} = D$ we get
\[(P^{-1})^T(M^{-1})^TP^T = D^{-1} \]
and still $(M^{-1})^T \in \SL(2, \mathbb{Z})$. Moreover, for any matrix of the form
\[S = \begin{pmatrix}
    0 & s_1 \\
    s_2 & 0
\end{pmatrix}\]
where $s_1, s_2 \in \{1,-1\}$, is such that $SD^{-1}S^{-1} = D$ and therefore
\[S(P^{-1})^T(M^{-1})^TP^TS^{-1} = D.\]
We can then consider the following construction. Take again the semi-direct product $\C \ltimes_{\rho} \C^2$ as in the construction of the Nakamura manifold in Section \ref{Examples} and take lattices
\[\Gamma_S = (\Z \oplus i\tau^{-1}\Z) \ltimes_{\rho} (P\Z^2 \oplus iSP^{-T}\Z^2)\]
where with $P^{-T}$ we have denoted the inverse transpose of $P$. The previous computations show the following.
\begin{lemma}
    There exist a well-defined family of compact 6-dimensional manifolds given by the quotients
    \[M_S := (\C \ltimes_{\rho} \C^2) \backslash \Gamma_S.
    \]
    All these manifolds carry a symplectic structure $\omega_S$ such that we get symplectomorphisms
    \[(M_S, \omega_S) \simeq (T^*B/\Lambda, \omega)\]
    where $\omega$ is given by \eqref{SymplecticStrMirror}.
\end{lemma}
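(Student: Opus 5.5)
The plan is to check two things. First, that $\Gamma_S$ is a genuine lattice in $G=\C\ltimes_\rho\C^2$, so that $M_S$ is a compact solvmanifold. Second, that the coordinates of $G$ can be matched with the coordinates $(r_0,r_1,r_2,\theta_0,\theta_1,\theta_2)$ on $T^*B$ so that the group action of $\Gamma_S$ becomes the action of $\Gamma_B$ on the base together with the $\Lambda^*$-translations on the fibres. Once the quotients are identified, we write down a left-invariant form $\omega_S$ on $G$ that pulls back to \eqref{SymplecticStrMirror}.

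\textbf{Lattice.} Write $w=x+iy$ and $z_j=x_j+iy_j$ on $G$. The subgroup $\Gamma_S$ is closed under the product $*$ as soon as the normal part $P\Z^2\oplus iSP^{-T}\Z^2$ is preserved by $\rho(a+ib\tau^{-1})=\mathrm{diag}(e^{\lambda a},e^{-\lambda a})$ for $a,b\in\Z$. Acting on the real parts, $D^aP=PM^a$, so $D^aP\Z^2=P\Z^2$. Acting on the imaginary parts, the relation $S(P^{-1})^T(M^{-1})^TP^TS^{-1}=D$ established just before the statement gives $D^aSP^{-T}=SP^{-T}((M^{-1})^T)^a$. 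Since $(M^{-1})^T\in\SL(2,\Z)$, this shows $D^aSP^{-T}\Z^2=SP^{-T}\Z^2$. Discreteness and co-compactness then follow exactly as for $\Gamma_{P,\tau}$ in \cite[Section 3]{CT}, because only the lattices in the two factors have changed. Thus $M_S$ is a compact solvmanifold of Nakamura type. The sign choices $s_1,s_2$ give the family.

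\textbf{Identification with $T^*B/\Lambda^*$.} I would use the coordinate matching
\[
r_0=x,\qquad r_1=x_1,\qquad r_2=x_2,\qquad \theta_0=y,
\]
with $(\theta_1,\theta_2)$ a linear function of $(y_1,y_2)$ involving $S$, chosen so that the $\Gamma_S$-action becomes the following.
\begin{itemize}
\item[(i)] On the base, $(r_0,r_1,r_2)\mapsto(r_0+a,\,D^a(r_1,r_2)+Pb)$, which is the $\Gamma_B$-action.
\item[(ii)] On the fibres, $\theta\mapsto D^{-a}\theta+\Lambda^*$, which is the transpose-inverse action induced on $T^*B$ by the affine map in (i).
\end{itemize}
Matching (ii) is where the swap $S$ enters. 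The group acts on the imaginary parts $(y_1,y_2)$ by $D^a$, while the cotangent action is by $D^{-a}$, so the identification must exchange the two coordinates: $(\theta_1,\theta_2)=S^{-1}(y_1,y_2)$ up to sign. The relation $SD^{-1}S^{-1}=D$ is precisely what makes these two actions agree. The map descends to a diffeomorphism of quotients, fibred over $B$.

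\textbf{Symplectic form.} Define $\omega_S$ as the pullback of $\sum_i d\theta_i\wedge dr_i$. In the invariant coframe $\theta^1,\dots,\theta^6$ of the Nakamura group, the exponential factors $e^{\mp\lambda x}$ cancel in pairs thanks to the swap. The result is, up to sign,
\[
\omega_S=\theta^{14}\pm\theta^{26}\pm\theta^{35}.
\]
This form is left-invariant, so it descends to $M_S$. It is nondegenerate and closed: by the structure equations, $d\theta^{26}=(-\lambda_1-\lambda_2)\theta^{126}=0$, and likewise $d\theta^{35}=0$. Being the pullback of \eqref{SymplecticStrMirror}, the map above is a symplectomorphism.

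The main obstacle is the bookkeeping in step (ii). One must check that the fibre lattice produced by $iSP^{-T}\Z^2$ and $i\tau^{-1}\Z$ is exactly $\Lambda^*$ as in \eqref{DualLattice}, with the correct transposes and the correct ordering of the factors, and that it stays invariant under the monodromy. I would verify this directly using $PM=DP$, mirroring the lemma proving that $\Lambda_{TB}$ is well defined.
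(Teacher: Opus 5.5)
Your argument is correct, but your identification differs from the paper's. You take the real parts $(x,x_1,x_2)$ as base coordinates and $(y,\,S^{-1}(y_1,y_2))$, up to sign, as fibre coordinates. So your Lagrangian tori are the leaves $\{x,x_1,x_2=\mathrm{const}\}$ of $\{E_4,E_5,E_6\}$, with section the leaf $\{y=y_1=y_2=0\}$ of $\{E_1,E_2,E_3\}$. The paper instead keeps the picture from the complex side $N$: fibres $\mathcal{L}_{234}$, section $L_{156}$, base coordinates $(x,y_1,y_2)$, and fibre coordinates $\psi_0=-y$, $(\psi_1,\psi_2)=(s_1x_2,s_2x_1)$.

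Both choices reproduce $\omega_S=\theta^{14}+s_1\theta^{35}+s_2\theta^{26}$. Both also give the right monodromy, $D^a$ on the base and $D^{-a}$ on the fibre, via $SDS^{-1}=D^{-1}$. Where they differ is the lattice bookkeeping. With $\Gamma_S$ as written, real parts lie in $P\Z^2$ and imaginary parts in $SP^{-T}\Z^2$. Your choice then gives exactly $\Gamma_B=\Z\ltimes P\Z^2$ on the base and $\tau^{-1}\Z\oplus S^{-1}SP^{-T}\Z^2=\Lambda^*$ on the fibres. The paper's coordinates instead put $SP^{-T}\Z^2$ on the base and $SP\Z^2$ on the fibres.

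The paper asserts that the base lattice is unaffected and that rescaling by $\mathrm{diag}(-1,S^{-1})$ yields $\tau^{-1}\Z\oplus P^{-T}\Z^2$. That is really the computation for your fibre coordinates $(y_1,y_2)$. For the paper's own coordinates it would require exchanging the real and imaginary lattices in $\Gamma_S$. So your route is the one whose bookkeeping closes with $\Gamma_S$ as stated. The paper's route buys continuity: the same fibration $\mathcal{L}_{234}\to L_{156}$ is used on both sides of the mirror.

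Nothing downstream is lost with your choice. Take $s_1=s_2=1$, $\theta_0=-y$ and $(\theta_1,\theta_2)=-S^{-1}(y_1,y_2)$. Then the transform of Lemma~\ref{FMTransform} is again invariant, namely $i\tau|\det P|(\theta^1+i\theta^4)\wedge(\theta^2+i\theta^6)\wedge(\theta^3+i\theta^5)$, which is the paper's $\Omega$ up to sign.

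One small sign point. With $\theta_0=y$ and $(\theta_1,\theta_2)=S^{-1}(y_1,y_2)$, the pullback of $\sum_i d\theta_i\wedge dr_i$ is $-(\theta^{14}+s_1\theta^{35}+s_2\theta^{26})$. Taking $\theta_0=-y$ and $(\theta_1,\theta_2)=-S^{-1}(y_1,y_2)$ gives exactly the paper's $\omega_S$, and leaves the lattices unchanged.
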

\begin{proof}
    The first part is identical to the computations for the Nakamura manifold, which can be found in \cite{CT}. We indeed just need to check that $\Gamma_S$ are well-defined lattices i.e., they are preserved under the action of $\rho$. Since $\rho(i\tau^{-1}) = 1$, we are left to check that $\rho(1) = D$ preserves the second factor of $\Gamma_S$. This holds because the relation $S(P^{-1})^T(M^{-1})^TP^TS^{-1} = D$ is true.\\[5pt]
    Now, writing the coordinate $z \in \C$ resp., the coordinates $z_1,z_2 \in \C^2$ as $z = x+iy$ resp., $z_k = x_k + iy_k$, we get a global invariant co-frame of $(1,0)$-forms on $M_S$ given by
    \[ \theta^1 = dx \qquad \theta^4 = dy\]
	\[ \theta^2 = e^{-\lambda x}dx_1  \qquad \theta^5  = e^{-\lambda x}dy_1 \]
	\[ \theta^3 = e^{\lambda x}dx_2 \qquad \theta^6 = e^{\lambda x}dy_2\]
    which we notice is the same as for the Nakamura manifold $N$. We know the $M$ carries symplectic structures (see \cite{LT}) and we can take the symplectic form
    \[\omega_S = \theta^{14} + s_1\theta^{35} + s_2\theta^{26} = dx \wedge dy + s_1dx_2 \wedge dy_1 + s_2dx_1 \wedge dy_2. \]
    Under this symplectic structure we still have the foliation of Lagrangian tori $\mathcal{L}_{234}$, which has a special Lagrangian section coming from the same $L_{156} = B$ as before. Therefore, we again get a fibration in Lagrangian tori
    \[\pi : M_S \rightarrow B\]
    with the map $\pi$ explicitly given in the coordinates of $M$ by 
    \[\pi(x,x_1, x_2, y, y_1, y_2) = (x,y_1, y_2). \]
    We finally prove that $(M_S, \omega_S) \simeq (T^*B/\Lambda, \omega)$ under a symplectomorphism. This is determined in the following way. Again, denote the basis coordinates as $(x,y_1,y_2) = (r_0, r_1, r_2)$ and define new fiber coordinates
    \[\psi_0 := -y \qquad \psi_1 = s_1^{-1}x_2 \qquad \psi_2 = s_2^{-1}x_2.\]
    These coordinates have the following effects:
    \begin{itemize}
        \item The lattice in $B$ is not affected, while the lattice on the fibers is rescaled by 
        \[\begin{pmatrix}
            -1 & 0 \\
            0 & S^{-1}\\
        \end{pmatrix}\]
        which gives on the fibers a lattice isomorphic to $\tau^{-1}\Z \oplus P^{-T}\Z^{2} = \Lambda^{*}$.
        \item The symplectic form is written in these coordinates as
        \[\omega = d\psi_0 \wedge dr_0 + d\psi_1 \wedge dr_1 + d\psi_2 \wedge dr_2 \]
        where we have used the fact that since $s_1,s_2 \in \{1,-1\}$ then $s_i^{-1} = s_i$.
    \end{itemize}
    Combining these two facts we show that all the manifolds $(M_S, \omega_S)$ are symplectically identified with $(T^*B/\Lambda^*, \omega)$.
\end{proof}
\begin{osservazione}
    For all matrices $S$ as fixed we get symplectomorphic manifolds. Therefore, for simplicity we take $s_1 = s_2 = 1$ in the following.
\end{osservazione}
We are left with just the final step to mirror symmetry. Let us denote with $\Omega$ the Fourier-Mukai transform of $e^{2\check{\omega}}$ as in Lemma \ref{FMTransform}. We have seen that $M$ is identified with $T^*B/\Lambda^*$ by the coordinate change 
\[\psi_0 = -\theta_0 \qquad \psi_1 = \theta_2 \qquad \psi_2 = \theta_1.\]
Under this change $\Omega$ is written as
\[\Omega = -\tau|\det P|(d\psi_0 + idr_0) \wedge (e^{\lambda r_0}d\psi_2 + ie^{-\lambda r_0}dr_1) \wedge (e^{-\lambda r_0}d\psi_1 + ie^{\lambda r_0}dr_2) = \]
\[= i\tau|\det P|(\theta^1 + i\theta^4) \wedge (\theta^3 + i\theta^5)\wedge (\theta^2 + i \theta^6)\]
which is the 3-form such that $(M, \omega, \Omega)$ is a type IIA structure. We then have the following final result (which contains a small abuse of notation).
\begin{teorema}\label{MirrorNakamura}
   Denote $(\check{X},\check{\omega}, \check{\Omega}) = (N, \check{\omega}, \check{\Omega})$, $(X, \omega,\Omega) = (M, \omega, \Omega)$. Then $(\check{X},\check{\omega}, \check{\Omega})$ and $(X, \omega,\Omega)$ form a semi-flat mirror pair. 
\end{teorema}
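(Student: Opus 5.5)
The proof consists of checking, against the definition of semi-flat mirror pair from \cite{LTY}, the conditions that the constructions of this section have produced piece by piece, and then invoking Theorem 5.1 of \cite{LTY} for the supersymmetry statement. I will proceed in the following order.

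\textbf{Step 1: the complex side.} By Lemma \ref{IdentNwithTB}, $N$ is biholomorphic to $TB/\Lambda$, where $\Lambda=\Lambda_{TB}$ is the well-defined lattice bundle constructed above. The complex structure is the canonical one \eqref{cmplxstrTB} coming from the affine structure on $B$. The transition maps of the affine coordinates $(r_0,r_1,r_2)$ are $\mathrm{diag}(1,e^{\lambda a},e^{-\lambda a})$ together with translations in $\Z\oplus P\Z^2$. In the lattice frame these become $\mathrm{diag}(1,M^a)\in \SL(3,\Z)$, which is exactly the condition under which the holomorphic volume form has the standard expression. The computation already carried out then gives
\[\check\Omega=(d\check\theta_0+idr_0)\wedge(d\check\theta_1+idr_1)\wedge(d\check\theta_2+idr_2).\]
I would also recall that $(N,\check\omega,\check\Omega)$ is a type IIB structure: $d\check\Omega=0$, the metric is balanced, $F=8$, and $\rho_B$ was computed above.

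\textbf{Step 2: the symplectic side.} By the preceding lemma, $(M_S,\omega_S)$ is symplectomorphic to $(T^*B/\Lambda^*,\omega_{\mathrm{can}})$ with $\omega=\sum d\theta_i\wedge dr_i$. The fibration $\pi:M\to B$ is by the Lagrangian tori of $\mathcal{L}_{234}$ (Proposition \ref{ToriLeaves}), and its fibers are dual to those of $N\to B$ because $\Lambda^*$ in \eqref{DualLattice} is the dual lattice. Hence $(r_i,\theta_i)$ are action-angle coordinates, and $(X,\omega)$ and $(\check X,\check\Omega)$ form a semi-flat mirror pair in the sense of the definition.

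\textbf{Step 3: matching $\Omega$ and $\check\omega$.} By Lemma \ref{FMTransform}, $\Omega=\mathrm{FT}(e^{2\check\omega})$. Since $\check\omega\in\Omega^{1,1}_B(\check X;\C)$ (its coefficients depend only on $r_0$), Theorem 5.1 of \cite{LTY} applies. It says that $(X,\omega,\Omega)$ is an $SU(3)$ structure with conformal factor $F=2^{6}\check F^{-1}$, and that it is supersymmetric of type IIA, since $\check X$ is of type IIB. As a sanity check I would verify directly that $d\omega=0$, which is clear since $\omega_S=\theta^{14}+\theta^{35}+\theta^{26}$ is closed by the structure equations, and that $d\,\mathrm{Re}\,\Omega=0$ using $d\theta^2=-\lambda\theta^{12}$ and $d\theta^3=\lambda\theta^{13}$.

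\textbf{Main obstacle.} The delicate point is bookkeeping of coordinates and constants. One must confirm that the change $\psi_0=-\theta_0$, $\psi_1=\theta_2$, $\psi_2=\theta_1$ simultaneously carries the lattice $\Gamma_S$ to $\Lambda^*$ and carries $\mathrm{FT}(e^{2\check\omega})$ to the invariant form $i\tau|\det P|\,(\theta^1+i\theta^4)\wedge(\theta^3+i\theta^5)\wedge(\theta^2+i\theta^6)$. That is the form I would check by expanding the wedge product, with attention to the swap of indices $1\leftrightarrow2$ induced by $S$ and to the signs coming from the orientation of the fiber integration.
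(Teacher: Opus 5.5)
Your proposal is correct and follows essentially the same route as the paper, whose proof is the assembly carried out in Section \ref{MirrorSymmCSNakamura}: the identification $N\cong TB/\Lambda$ (Lemma \ref{IdentNwithTB}), the symplectomorphism $(M_S,\omega_S)\cong(T^*B/\Lambda^*,\omega)$, Lemma \ref{FMTransform}, and the rewriting of $\mathrm{FT}(e^{2\check\omega})$ as an invariant form on $M$. The paper only asserts the type IIA property, while you justify it via Theorem 5.1 of \cite{LTY} and a direct check; that check goes through, because the prefactor $i\tau|\det P|$ is purely imaginary, so $\mathrm{Re}\,\Omega$ is a multiple of the imaginary part of the wedge product, which is closed. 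One caveat: the fibre volume $\tau|\det P|$ is not normalized to $1$, so the relation $F=2^6\check F^{-1}$ holds only up to the constant factor $(\tau|\det P|)^2$, which does not affect the conclusion.
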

\subsection{Cohomological computations}
Since $X$ and $\check{X}$ share a lot of similarities in this case, we can ask about their cohomologies and/or relations between them.\\[5pt]
Let us denote $\check{X}$ by $N$ as before. Since both $\check{X}$ and $X$ are constructed from the same Lie group, the are both compact solvmanifolds of \textit{completely solvable type}. Therefore, by Hattori's Theorem \cite{Hattori} we have $H^{\bullet}(\mathfrak{g})\simeq H^{\bullet}_{dR}(X;\R) \simeq H^{\bullet}_{dR}(\check{X};\R)$, that is the de Rham cohomology can be computed by invariant forms. In particular, the de Rham cohomology in this case does not "see" the difference in the lattice in the mirror pair. Therefore, we obtain the following table for the de Rham cohomology 
$$
\begin{array}{l}
H^0_{dR}(N;\R)=\R\langle 1\rangle \\[10pt]
H^1_{dR}(N;\R)=\R\langle \theta^1,\theta^4\rangle\\[10pt]
H^2_{dR}(N;\R)=\R\langle \theta^{14},\theta^{35},\theta^{26},\theta^{23},\theta^{56}\rangle\\[10pt]
H^3_{dR}(N;\R)=\R\langle \theta^{135},\theta^{126},\theta^{123},\theta^{156},\theta^{345},\theta^{246},\theta^{234},\theta^{456}\rangle\\[10pt]
H^4_{dR}(N;\R)=\R\langle \theta^{2356},\theta^{1246},\theta^{1345},\theta^{1456},\theta^{1234}\rangle\\[10pt]
H^5_{dR}(N;\R)=\R\langle \theta^{23456},\theta^{12356}\rangle\\[10pt]
H^6_{dR}(N;\R)=\R\langle \theta^{123456}\rangle\\[10pt]
\end{array}
$$
On the other hand, the \textit{Dolbeault cohomology} depends on the lattice and we detect a difference just by comparing the first Hodge number $ h^{1,0} $.\\ 
The construction of both $\check{X}$ and $X$ allows to apply Kasuya's results \cite{Kasuya} in the same way as in \cite[section 4.4]{CT}. Let us start with $\check{X} = N$. Directly by \cite[section 4.4]{CT} we have that $H^{1,0}(\check{X})$ is generated by the forms
\[ \varphi^{0} \qquad e^{-\frac{1}{2}\lambda(w - \bar w)}\varphi^1 \qquad e^{\frac{1}{2}\lambda(w - \bar w)}\varphi^2 \]
such that
\[ f(\gamma) = 1 \text{ for all } \gamma \in \Gamma' \]
where $ \Gamma'  = \Z \oplus i\tau\Z$ and $f(w) = e^{\pm\frac{1}{2}\lambda(w - \bar w)}$. The condition over $ f $ is equivalent to
\[\tau\lambda \in 2\pi\Z\]
therefore, we have
\[h^{1,0}(\check{X}) = \begin{cases}
    1 \quad \text{if } \tau\lambda \notin 2\pi\Z\\
    3 \quad \text{if } \tau\lambda \in 2\pi\Z\\
\end{cases}.\]
Applying the same method to $X$, we get instead the condition
\[\tau^{-1}\lambda \in 2\pi\Z\]
which gives
\[h^{1,0}(X) = \begin{cases}
    1 \quad \text{if } \tau^{-1}\lambda \notin 2\pi\Z\\
    3 \quad \text{if } \tau^{-1}\lambda \in 2\pi\Z\\
\end{cases}.\]
Already from here we can see that the conditions
\[ \tau\lambda \in 2\pi\Z \qquad \tau^{-1}\lambda \in 2\pi\Z \]
are not simultaneously satisfied, at least for generic values of $\tau$. In following, an explicit example.
\begin{esempio}
Take the Nakamura manifold build by the matrix
	\[ M = \begin{pmatrix}
		2 & 3 \\
		1 & 2 \\
	\end{pmatrix} \]
(which corresponds to the case of $ m = 2 $ in \cite[example 5.3]{CT}), so that $ \lambda = \log(2 + \sqrt{3}) $. Let us choose 
\[  \tau = \dfrac{2\pi}{\lambda} \in \R \setminus \{0\}. \]
With this choice we have of course $ \tau\lambda \in 2\pi\Z $, so that $ h^{1,0}(N) = h^{1,0}(\check{X}) = 3 $. On the other hand, we have
\[ \tau^{-1}\lambda = \dfrac{\lambda^2}{2\pi} = \dfrac{ \log(2 + \sqrt{3})^2}{2\pi} \notin 2\pi\Z \]
and therefore $ h^{1,0}(X) = 1 $.
\end{esempio}
These computations show that, even if the construction is totally similar, the Nakamura manifold and its semi-flat mirror \textit{are not generally biholomorphic}.\\[5pt]
As final cohomological point, we can in this case compute the \textit{refined symplectic Bott-Chern cohomology} explicitly. Consider the construction of the mirror Nakamura manifold (which for us is $X= M$, the symplectic side). Since the Lie algebra of the corresponding group is the same as for $N$, $(X, \omega)$ is a solvmanifold of \textit{completely solvable type} \cite[section 4.5]{CT}. Therefore, by \cite[theorem 3.2]{AK} (originally from \cite[theorem 3]{Macri}) the symplectic Bott-Chern cohomology is computed considering just \textit{invariant forms}. In this way, to compute the cohomologies we just need to look at the invariant forms with the right polarization i.e, $p \in \{2,3,4\}$ is the fiber index while $q \in \{1,5,6\}$ is the base index. The results are summarized in table \ref{tab:coomSympCSNakamura}; in the last column we can see appear the corresponding Hodge diamond, whose symmetric with respect to the middle line is the Hodge diamond of the \textit{refined Bott-Chern cohomology} of the Nakamura manifold $N$.
    \begin{table}[H]
        \centering
        \begin{tabular}{|c|c|c|}
        \hline
             $(p,q)$ & $H^{p,q}_{B, TY}(X)$ & $h^{p,q}_{B, TY}(X)$\\
             (0,0) & $\R\langle 1\rangle$ & 1 \\
             \hline
             (1,0), (0,1) & $\R\langle \theta^1\rangle$, $\R\langle \theta^4\rangle$ & 1,1 \\
             \hline
             (2,0), (1,1), (0,2) & $\R\langle \theta^{56}\rangle$, $\R\langle \theta^{14},\theta^{35},\theta^{26}\rangle$, $\R\langle\theta^{23}\rangle$ & 1,3,1 \\
            \hline
            (3,0), (2,1), (1,2), (0,3) & $\R\langle\theta^{156}\rangle$, $\R\langle \theta^{135},\theta^{126},\theta^{456}\rangle$, $\R\langle \theta^{123},\theta^{345},\theta^{246}\rangle$,  $\R\langle \theta^{234}\rangle$ & 1,3,3,1\\
            \hline
            (3,1), (2,2), (1,3) & $\R\langle\theta^{1456}\rangle$, $\R\langle \theta^{2356},\theta^{1246},\theta^{1345}\rangle$, $\R\langle \theta^{1234}\rangle$ & 1,3,1 \\
            \hline
            (3,2), (2,3) & $\R\langle \theta^{12356}\rangle$, $\R\langle \theta^{23456}\rangle$ & 1,1 \\
            \hline
            (3,3) & $\R\langle \theta^{123456}\rangle$ & 1 \\
            \hline
        \end{tabular}
        \caption{Refined symplectic Bott-Chern cohomology of $(X, \omega)$}
        \label{tab:coomSympCSNakamura}
    \end{table}

\section{Non-K\"ahler mirror symmetry: the complex parallelizable Nakamura manifold}\label{MirrorSymmCPNakamura}
\subsection{Constructions}
In the previous section we have computed a semi-flat mirror pair involving the \textit{completely solvable} Nakamura manifold. In particular, complete solvability "hides" any trivial evidence for $N$ and its mirror not to be diffeomorphic. To show that the situation is non-trivial (i.e., we can get non-K\"ahler semi-flat mirror pairs in which the mirror manifolds are not diffeomorphic) we also present the non-K\"ahler SYZ mirror of the \textit{complex parallelizable} Nakamura manifolds.\\[5pt]
Let us start by recalling the construction from \cite{Nakamura}. The start is the same as before, taking $M \in \SL(2;\Z)$ such that
\[PMP^{-1} = D = \begin{pmatrix}
    \lambda & \\
    & -\lambda\\
\end{pmatrix} \qquad \lambda \ne 0\]
for some $P \in \GL(2;\R))$. We can take a lattice $P\Z^2 \oplus iP\Z^2$ and have a complex 2-torus defined by
\[\mathbb{T}_{\C}^2 := \C^2 / (P\Z^2 \oplus iP\Z^2)\]
where the coordinates of $\C^2$ are denoted by $(z_2, z_3)$. Then the map
\[D[z_2,z_3] := [D(z_2, z_3)^t]\]
is a well-defined biholomorphism of $\mathbb{T}_{\C}^2$. Now we can consider $\C \times \mathbb{T}_{\C}^2$ and take the lattice $\Gamma = \langle T_1,T_2\rangle$, where the biholomorphisms $T_1, T_2$ of $\C \times \mathbb{T}_{\C}^2$ are defined by
\begin{align*}
    &T_1(z_1,[z_2,z_3]) := (z_1 +1 ,[e^{\lambda}z_2,e^{-\lambda}z_3]) \\
    &T_2(z_1,[z_2,z_3]) := (z_1 + i\tau , [ z_2 , z_3])
\end{align*}
for some $\tau \ne 0, \tau \in \R$. It is possible to prove that $\Gamma$ acts properly and discontinuously on $\C \times \mathbb{T}_{\C}^2$ and therefore we can define a compact 6-dimensional manifold
\begin{equation}\label{DefCPNakamura}
    \check{M} := \Gamma \backslash (\C \times \mathbb{T}_{\C}^2)
\end{equation}
which is called the \textit{complex parallelizable Nakamura manifold}.\\[5pt]
We can fix a complex structure on $\check{M}$ by fixing the following global co-frame of $(1,0)$-forms
\[\phi^1 = dz_1 \qquad \phi^2 = e^{-z_1}dz_2 \qquad \phi^3 = e^{z_1}dz_3\]
with structure equations
\begin{equation}\label{EqStrCPNakamuraComplex}
d\phi^1 = 0 \qquad d\phi^2 = -\phi^{12} \qquad d\phi^3 = \phi^{13}.
\end{equation}
As before, we fix a global real co-frame of 1-forms by
\[\phi^{k} = \theta^k + i\theta^{3+k} \qquad k = 1,2,3\]
with real structure equations (which we derive from \eqref{EqStrCPNakamuraComplex})
\begin{equation}\label{EqStrCPNakamuraReal}
\begin{aligned}
    &d\theta^1 = 0 \\
    &d\theta^2 = -\theta^{12} + \theta^{45}\\ 
    &d\theta^3 = \theta^{13} - \theta^{46}\\
    &d\theta^4 = 0\\
    &d\theta^5 = -\theta^{15} + \theta^{24}\\ 
    &d\theta^3 = \theta^{16} - \theta^{34}.
\end{aligned}
\end{equation}
If we denote by $\{E_1, ..., E_6\}$ the global real frame dual to $\{\theta^1, ..., \theta^6\}$, we get the following non-zero brackets
\begin{equation}\label{BracketsCPNakamura}
\begin{aligned}
    [E_1, E_2] = E_2 \qquad [E_1, E_3] = -E_3 \qquad [E_1, E_5] = E_5 \qquad [E_1, E_6] = -E_6\\
    [E_4, E_5] = -E_2 \qquad [E_4, E_6] = E_3 \qquad [E_2, E_4] = -E_5 \qquad [E_3, E_4] = E_6.
\end{aligned}
\end{equation}
Setting
\[\check{\omega} = \dfrac{i}{2}\sum_{k = 1}^{3}\phi^{k\bar k} ,\qquad \check{\Omega} = \phi^{123} \]
it is straightforward to check that $\omega$ is the fundamental form of a Hermitian metric on $M$, $\Omega$ is $(3, 0)$-form and the following system of equations
\begin{equation}\label{SystemTypeBCPNakamura}
    \begin{cases}
        d\check{\omega}^2 = 0 \\
        d\check{\Omega} = 0\\
        \check{\Omega} \wedge \overline{\check{\Omega}} = - iF\dfrac{\check{\omega^3}}{6} \\
        2i\del\delbar(F^{-1}\check{\omega}) = \rho_B
    \end{cases}
\end{equation}
endows $(\check{M}, \check{\omega}, \check{\Omega})$ with a Type IIB structure with $F = 8$ and $\rho_B = -\dfrac{1}{8}(\phi^{1\bar 12\bar2} + \phi^{1\bar 13\bar3})$. Also, these forms give $\check{M}$ the structure of a non-K\"ahler Calabi-Yau manifold. In particular, being $\{E_1, E_2, E_3\}$ involutive, Theorem \ref{DistribuzioniSingole} ensures that the foliation $\mathcal{L}_{123}$ is a special Lagrangian foliation.\\[5pt]
Now we consider the special Lagrangian $L_{0,0,0} \in \mathcal{L}_{123}$, which is given by
\[L_{0,0,0} = \{(x + \lambda a, e^{\lambda a}x_1 + p_{11}a_{1} + p_{12}a_2,e^{-\lambda a}x_2 + p_{21}a_{1} + p_{22}a_2, b\tau,p_{11}b_{1} + p_{12}b_2, p_{21}b_{1} + p_{22}b_2) | a_i,b_i \in \Z\}.\]
Reasoning in the same way as we did in Proposition \ref{Topology123And156} we get that $L_{0,0,0}$ is a compact special Lagrangian submanifold of $\check{M}$, from which we get a well-defined Lagrangian fibration
\[ T \hookrightarrow \check{M} \rightarrow B := L_{0,0,0}\]
where the fiber $T$ is a real Lagrangian  3-torus defined by $T = \R^3\backslash \Lambda$, for
\[ \Lambda := \tau\Z \oplus P\Z^2.\]
Therefore, we have all the tools to apply again the construction of supersymmetric non-K\"ahler mirror pairs. The symplectic side of the pair is in this case given by the manifold 
\begin{equation}\label{SymplecticSideMirrorNakamura}
	M := \Gamma' \backslash (\C \times \mathbb{T}^{2}_{\C})
\end{equation}
where $  \mathbb{T}^{2}_{\C} $ is the torus defined by 
\[\mathbb{T}_{\C}^2 := \C^2 / (P\Z^2 \oplus i(P^{-T})\Z^2)\]
and $\Gamma'$ is the lattice in $\C \times \mathbb{T}^{2}_{\C}$ (again considering coordinates $ (z_1, z_2, z_3) $ on $ \C^3 $) generated by the bioholomorphisms
\begin{align*}
	&T_1'(z_1,[z_2,z_3]) := (z_1 +\lambda , [e^{\lambda}z_2,e^{-\lambda}z_3]) \\
	&T_2'(z_1,[z_2,z_3]) := (z_1 + i\tau^{-1}, [z_2 , z_3])
\end{align*}
Keeping the same notations as in the construction of the complex side \eqref{DefCPNakamura}, a well-defined symplectic form $\omega$ is given by
\[\omega = \theta^{14} + \theta^{35} - \theta^{26}.\]
In this case we get the following result.
\begin{teorema}\label{TheoremMirrorCPNakamura}
    The symplectic side of the semi-flat non-K\"ahler supersymmetric mirror pair of the complex parallelizable Nakamura manifold $ (\check{M}, \check{\omega}, \check{\Omega}) $ is the symplectic manifold $(M, \omega, \Omega)$, where $M$ is defined as in \eqref{SymplecticSideMirrorNakamura} and $\Omega$ is the Fourier-Mukai transform of $e^{2\check{\omega}}$.
\end{teorema}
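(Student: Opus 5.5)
We follow the scheme used for the completely solvable case in Section \ref{MirrorSymmCSNakamura}, adapted to the complex parallelizable structure equations \eqref{EqStrCPNakamuraReal}.

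\emph{Step 1: identify $\check{M}$ with $TB/\Lambda$.} We use the special Lagrangian torus fibration $\check{M}\to B=L_{0,0,0}$ with fibers $T=\R^3/\Lambda$, $\Lambda=\tau\Z\oplus P\Z^2$. As base coordinates $(r_0,r_1,r_2)$ we take the real parts $(x,x_1,x_2)$ of $(z_1,z_2,z_3)$. The deck transformation $T_1$ acts on them by the affine map $\mathrm{diag}(1,e^{\lambda},e^{-\lambda})$ plus translations in $\Z\oplus P\Z^2$, so $B$ is an affine manifold. Exactly as in the Lemma constructing $\Lambda_{TB}$, we check that the lattice bundle spanned by $\partial_{\alpha_0}$ and by $P$ applied to the exponentially twisted frame is invariant under $r_0\mapsto r_0-1$, using $PM=DP$. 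We then write down a biholomorphism $TB/\Lambda\to\check{M}$ sending the fiber coordinates $\alpha_i$ to $\pm$ the imaginary parts $(y,y_1,y_2)$, and set $\check{\theta}_i:=\pm\alpha_i$ so that $\check{\Omega}=\bigwedge_j(d\check{\theta}_j+i\,dr_j)$ up to a constant.

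\emph{Step 2: compute the Fourier--Mukai transform.} We express $\check{\omega}=\frac{i}{2}\sum\phi^{k\bar k}$ in these coordinates. Because $\phi^2=e^{-z_1}dz_2$ and $\phi^3=e^{z_1}dz_3$, the moduli $|e^{\mp z_1}|^2=e^{\mp 2x}$ depend only on $r_0$. Hence $\check{\omega}=d\check{\theta}_0\wedge dr_0+e^{-2r_0}d\check{\theta}_1\wedge dr_1+e^{2r_0}d\check{\theta}_2\wedge dr_2$, and this form lies in $\Omega^{1,1}_B(\check{X};\C)$. The phase $e^{\mp iy}$ cancels in $\phi^{k\bar k}$, and this cancellation is the point to verify carefully. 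Repeating the computation of Lemma \ref{FMTransform} then gives $\Omega=\mathrm{FT}(e^{2\check{\omega}})=c\,(d\theta_0+i\,dr_0)\wedge(e^{r_0}d\theta_1+ie^{-r_0}dr_1)\wedge(e^{-r_0}d\theta_2+ie^{r_0}dr_2)$, where $c$ is minus the fiber volume $\tau|\det P|$. The dual side is $T^*B/\Lambda^*$ with $\Lambda^*=\tau^{-1}\Z\oplus P^{-T}\Z^2$ and canonical symplectic form $\sum d\theta_i\wedge dr_i$.

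\emph{Step 3: identify $T^*B/\Lambda^*$ symplectically with $(M,\omega)$.} First we check that $\Gamma'$ acts properly discontinuously and preserves the torus $\C^2/(P\Z^2\oplus iP^{-T}\Z^2)$. This is where we use $SP^{-T}M^{-T}P^TS^{-1}=D$ with the swap matrix $S$, as in the lemma on $M_S$. The sign choice $s_1=1$, $s_2=-1$ explains the term $-\theta^{26}$ in $\omega$. Next we check that $\omega=\theta^{14}+\theta^{35}-\theta^{26}$ is closed using \eqref{EqStrCPNakamuraReal}. Then we choose fiber coordinates $\psi_0=-y$, $\psi_1=x_3$, $\psi_2=-x_2$ (or the appropriate variant) in which $\omega=\sum d\psi_i\wedge dr_i$ and the fiber lattice becomes $\Lambda^*$. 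Finally, by Theorem 5.1 of \cite{LTY}, since $(\check{M},\check{\omega},\check{\Omega})$ is of type IIB by \eqref{SystemTypeBCPNakamura}, the triple $(M,\omega,\Omega)$ is a type IIA $SU(3)$ structure, so the two sides form a semi-flat mirror pair.

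\emph{Main obstacle.} The delicate point is Step 3, because here the structure is no longer completely solvable. The monodromy of $z_1$ now rotates $z_2,z_3$ through the phase $e^{\mp iy}$, not only rescales them. We must check that this rotation is compatible with the chosen base/fiber splitting, and in particular that $\check{\omega}$ and $\Omega$ depend only on the base $B$. We also need the generator $T_1'$ with translation $\lambda$ (not $1$) to be consistent with the affine monodromy on the base. We would first try to verify both points directly at the level of invariant coframes, by writing $\Omega$ in terms of $\theta^k$ on $M$ and checking $d\,\mathrm{Re}\,\Omega=0$.
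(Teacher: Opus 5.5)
Your proposal is correct and follows the same route as the paper, which reruns the construction of Section~\ref{MirrorSymmCSNakamura}: the SLag fibration over $B=L_{0,0,0}$ with fiber lattice $\tau\Z\oplus P\Z^2$, the holomorphic identification $\check M\cong TB/\Lambda$, the computation of $\mathrm{FT}(e^{2\check\omega})$, the symplectic identification of $T^*B/\Lambda^*$ with $(M,\omega)$, and then \cite[Theorem 5.1]{LTY}. Two small fixes: since your base coordinates are the real parts $(x,x_1,x_2)$, the Step 3 fiber coordinates must be imaginary parts, namely $\psi_0=-y$, $\psi_1=y_2$, $\psi_2=-y_1$ (which take the imaginary lattice $SP^{-T}\Z^2$, with $s_1=1$, $s_2=-1$, onto $P^{-T}\Z^2$ and give $\omega=\theta^{14}+\theta^{35}-\theta^{26}=dx\wedge dy+dx_2\wedge dy_1-dx_1\wedge dy_2=\sum_i d\psi_i\wedge dr_i$); and your ``main obstacle'' is not a real obstacle, because $\Gamma$ and $\Gamma'$ act on $(z_2,z_3)$ only by the real scalings $e^{\pm\lambda}$ and by translations, while the phases $e^{\mp iy}$ appear only in the coframe and cancel in both $\check\omega$ and $\omega$ (in $\omega$ precisely because of the sign $s_2=-1$, since $\theta^{35}+\theta^{26}$ is not closed).
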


\subsection{Cohomological computations: a non-diffeomorphic mirror pair}
As we did in section \ref{MirrorSymmCSNakamura}, it is interesting to compute and compare different cohomologies of the semi-flat mirror pair. In this case, we observe peculiar behaviour that was not really considered in any known construction of non-K\"ahler mirror pairs.\\[5pt] 
In fact, recall that both the Nakamura manifold and its mirror belong to the class of \textit{completely solvable} solvmanifolds therefore, the de Rham cohomology only detects the Lie algebra behaviour. Since the Lie algebra of the complex and symplectic side of the pair is the same, the de Rham cohomology does not distinguish between $N$ and its mirror.\\[5pt]
The case of the complex parallelizable Nakamura manifold brings a substantial difference. Even if the geometric construction of the two sided of the mirror pair is the same, we do not have complete solvability. Indeed, the de Rham cohomology of both $ \check{M} $ and $ M $ is computed in \cite[table 13]{AK} and depends on the value of the parameter $\tau$ involved in the construction of the lattices. Since $ \check{M} $ is constructed using $\tau$, while $ M $ depends on $\tau^{-1}$ we obtain the following 
\begin{teorema}\label{deRhamCohomCPNakamura}
	Consider a semi-flat mirror pair $ (M, \check{M}) $ constructed from a parameter $\tau \in \pi\Z$. Then the Betti numbers of the pair are the ones apprearing in table \ref{tab:BettiNumbersMirrorPairCPNakamura}. Hence, $M$ and $ \check{M} $ are not diffeomorphic.
\end{teorema}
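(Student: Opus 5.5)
The plan is to compute the de Rham cohomology of both sides of the mirror pair using the known description of the cohomology of the complex parallelizable Nakamura manifold. In particular we use its dependence on the imaginary period of the lattice, as tabulated in \cite[table 13]{AK}. Once the Betti numbers of $\check{M}$ and $M$ are written down in terms of the parameter, it suffices to exhibit one Betti number that differs.

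First, put both manifolds in the same normal form. The complex side $\check{M}$ has $\Gamma=\langle T_1,T_2\rangle$, with imaginary period $\tau$ (the generator $z_1\mapsto z_1+i\tau$). The symplectic side $M$ has the same Lie group, the same monodromy $\mathrm{diag}(e^{\lambda},e^{-\lambda})$ on the torus factor, and the fiber torus $\C^2/(P\Z^2\oplus iP^{-T}\Z^2)$; its imaginary period is $\tau^{-1}$. Since $P^{-T}$ conjugates $M^{-T}\in\SL(2;\Z)$ to $D^{-1}$, composing with the swap matrix $S$ as in Section \ref{MirrorSymmCSNakamura} shows that the fiber lattice is still of Nakamura type. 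Hence $M$ is again a complex parallelizable Nakamura manifold, with parameter $\tau^{-1}$ in place of $\tau$.

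Second, recall the mechanism behind \cite[table 13]{AK}. Because $G$ is not completely solvable, Hattori's theorem fails. The cohomology is instead computed by a Mostow/Kasuya-type modified invariant complex: the invariant forms $\theta^{\bullet}$ together with the twisted forms $e^{\pm i\lambda y}\phi^2$, $e^{\pm i\lambda y}\phi^3$ and their conjugates. The twisted forms survive exactly when the characters $y\mapsto e^{\pm i\lambda y}$ are trivial on the imaginary period. For $\check{M}$ this condition reads $\lambda\tau\in 2\pi\Z$; for $M$ it reads $\lambda\tau^{-1}\in 2\pi\Z$. The extra classes then contribute to $b_1,b_2,b_3$, and these can be read off the table.

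Third, choose $\tau\in\pi\Z$ (in the paper's normalization of $\lambda$) so that the resonance condition holds for $\check{M}$ but not for $M$. As in the Example of Section \ref{MirrorSymmCSNakamura}, the transcendence of $\lambda=\log$ of a quadratic unit (or at least its irrationality) rules out $\lambda^2\in 2\pi^2\Z$-type coincidences. Inserting the two cases into the table gives Table \ref{tab:BettiNumbersMirrorPairCPNakamura}. In particular $b_1$ (or $b_2$) differs, so $M$ and $\check{M}$ are not even homotopy equivalent, and a fortiori not diffeomorphic.

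The main obstacle is the second step: justifying that \cite[table 13]{AK}, stated for a specific lattice, applies to $M$ with fiber lattice $P\Z^2\oplus iP^{-T}\Z^2$ and first translation $\lambda$. One also has to make sure the arithmetic condition on $\tau^{-1}$ genuinely fails. I would first try to verify directly that the Kasuya-style twisted complex depends only on the monodromy and the imaginary period, and not on the torus lattice.
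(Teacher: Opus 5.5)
\textbf{Overall.} Your route is the paper's route. The paper's whole argument is the citation of \cite[table 13]{AK} plus the remark that $\check M$ is built from $\tau$ and $M$ from $\tau^{-1}$. However, your mechanism is not the one behind that table, and several concrete claims are wrong.

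\textbf{Errors in the mechanism.} With $\phi^2=e^{-z_1}dz_2$ and $\phi^3=e^{z_1}dz_3$, the modified $1$-forms are $e^{iy}\phi^2=e^{-x}dz_2$, $e^{-iy}\phi^3=e^{x}dz_3$ and their conjugates. No $\lambda$ appears, and these forms are not closed, e.g.\ $d(e^{-x}dz_2)=-dx\wedge e^{-x}dz_2$. So $H^1=\R\langle\theta^1,\theta^4\rangle$ and $b_1=2$ on \emph{both} sides, as in Table \ref{tab:BettiNumbersMirrorPairCPNakamura}; your claim that $b_1$ differs is false.

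The extra classes start in degree $2$: they are $dz_2\wedge d\bar z_3=e^{2iy}\phi^{2\bar 3}$, its conjugate, and their products with $\theta^1,\theta^4$. They survive exactly when $e^{2i\tau}=1$, i.e.\ $\tau\in\pi\Z$. Your condition $\lambda\tau\in 2\pi\Z$ is the Dolbeault condition for the completely solvable Nakamura manifold of Section \ref{MirrorSymmCSNakamura}. It is unrelated to the hypothesis $\tau\in\pi\Z$, and for irrational $\lambda$ it does not even hold for $\check M$. The invariants to compare are $b_2$ ($5$ versus $3$) or $b_3$ ($8$ versus $4$).

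Non-resonance on the mirror side needs nothing about $\lambda$. If $\tau=k\pi\neq 0$ and $\tau^{-1}=m\pi$, then $\pi^2=1/(km)\in\mathbb{Q}$, which is false.

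\textbf{The obstacle you flag is a genuine gap.} The paper does not address it either. In \cite{AK} the $\tau$-dependence comes entirely from the imaginary generator acting on the fibre by $\phi(i\tau)=\mathrm{diag}(e^{i\tau},e^{-i\tau})$; that is what multiplies $dz_2\wedge d\bar z_3$ by $e^{2i\tau}$.

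As written, $T_2$ and $T_2'$ act trivially on $\mathbb{T}^2_{\C}$. Then each of $\check M$ and $M$ is $S^1$ times the mapping torus of $D$ on its fibre $T^4$. On $P\Z^2\oplus iP\Z^2$ and on $P\Z^2\oplus iP^{-T}\Z^2$, $D$ is given by the integer matrices $M\oplus M$ and $M\oplus M^{T}$ respectively (here $M\in\SL(2;\Z)$ is the monodromy matrix, not the manifold). In both cases the eigenvalues are $e^{\pm\lambda}$, each with multiplicity two. The Wang sequence then gives Betti numbers $(1,2,5,8,5,2,1)$ on \emph{both} sides, for every $\tau$.

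If instead you twist $T_2'$ by $\phi(i\tau^{-1})$ to match \cite{AK}, you get a unitary map of infinite order, again because $\pi^2\notin\mathbb{Q}$. It therefore preserves no lattice, so the twisted $T_2'$ is not even defined on the fibre torus.

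Hence your assertion that $M$ is ``again a complex parallelizable Nakamura manifold with parameter $\tau^{-1}$'' is exactly the step that fails. Your proposed direct verification would not produce the $M$-column of the table. A symplectic side to which \cite[table 13]{AK} genuinely applies has to be constructed before the Betti-number comparison can be made.
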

	  \begin{table}[htbp]
		\centering
		\begin{tabular}{|c|c|c|}
			\hline
			$k$ & $b_k(M)$ & $b_k(\check{M})$\\
			\hline
			$ 0 $ & $ 1 $ & $ 1 $ \\
			\hline
			$ 1 $ & $ 2 $  & $ 2 $ \\
			\hline
			$ 2 $ &$ 3 $ & $ 5 $ \\
			\hline
			$ 3 $ & $ 4 $& $ 8 $\\
			\hline
			$ 4 $ & $ 3 $& $ 5 $ \\
			\hline
			$ 5 $ & $ 2 $ & $ 2 $ \\
			\hline
			$ 6 $ & $ 1 $ & $ 1 $ \\
			\hline
		\end{tabular}
		\caption{Betti numbers of the mirror pair $ (M, \check{M}) $ for $\tau \in \pi\Z$}
		\label{tab:BettiNumbersMirrorPairCPNakamura}
	\end{table}
To conclude the analysis, we report the computation of the refined symplectic Bott-Chern cohomologies of the symplectic side $(M, \omega)$ of the mirror pair. The results are summarized in tables \ref{tab:coomSympCSPNakamuraCaseA} and \ref{tab:coomSympCSPNakamuraCaseB} (also in this case we have to distinguish the computation in the case of $\tau^{-1} \in \pi\Z$ or $ \tau^{-1} \notin \pi\Z $). The coordinates on the basis $ B $ are still denoted by $ r_i $, $ i = 0,1,2 $.
    \begin{table}[htbp]
	\centering
	\begin{tabular}{|c|c|c|}
		\hline
		$(p,q)$ & $H^{p,q}_{B, TY}(M)$ & $h^{p,q}_{B, TY}(M)$\\
		(0,0) & $\R\langle 1\rangle$ & 1 \\
		\hline
		(1,0), (0,1) & $\R\langle \theta^1\rangle$, $\R\langle \theta^4\rangle$ & 1,1 \\
		\hline
		(2,0), (1,1), (0,2) & 0, $\R\langle \theta^{14},\theta^{26}-\theta^{35}, e^{2r_0}(\theta^{26} + \theta^{35})\rangle$, 0 & 0,3,0 \\
		\hline
		(3,0), (2,1), (1,2), (0,3) & \text{all zero }& 0,0,0,0\\
		\hline
		(3,1), (2,2), (1,3) & 0, $\R\langle \theta^{2356},e^{-2r_0}(\theta^{1345} - \theta^{1246}), \theta^{1345} - \theta^{1246}\rangle$, 0 & 0,3,0 \\
		\hline
		(3,2), (2,3) & $\R\langle \theta^{12356}\rangle$, $\R\langle \theta^{23456}\rangle$ & 1,1 \\
		\hline
		(3,3) & $\R\langle \theta^{123456}\rangle$ & 1 \\
		\hline
	\end{tabular}
	\caption{Refined symplectic Bott-Chern cohomology of $(M, \omega)$ if $ \tau^{-1} \in \pi\Z $}
	\label{tab:coomSympCSPNakamuraCaseA}
\end{table}

    \begin{table}[htbp]
	\centering
	\begin{tabular}{|c|c|c|}
		\hline
		$(p,q)$ & $H^{p,q}_{B, TY}(M)$ & $h^{p,q}_{B, TY}(M)$\\
		(0,0) & $\R\langle 1\rangle$ & 1 \\
		\hline
		(1,0), (0,1) & $\R\langle \theta^1\rangle$, $\R\langle \theta^4\rangle$ & 1,1 \\
		\hline
		(2,0), (1,1), (0,2) & 0, $\R\langle \theta^{14},\theta^{35}-\theta^{26}\rangle$, 0& 0,2,0 \\
		\hline
		(3,0), (2,1), (1,2), (0,3) & \text{all zero}& 0,0,0,0\\
		\hline
		(3,1), (2,2), (1,3) & 0, $\R\langle \theta^{2356},\theta^{1246}-\theta^{1345}\rangle$, 0 & 0,2,0 \\
		\hline
		(3,2), (2,3) & $\R\langle \theta^{12356}\rangle$, $\R\langle \theta^{23456}\rangle$ & 1,1 \\
		\hline
		(3,3) & $\R\langle \theta^{123456}\rangle$ & 1 \\
		\hline
	\end{tabular}
	\caption{Refined symplectic Bott-Chern cohomology of $(M, \omega)$ if $ \tau^{-1} \notin \pi\Z $}
	\label{tab:coomSympCSPNakamuraCaseB}
\end{table}

\end{document}